\newtheorem{theorem}{Theorem}
\newtheorem{prop}[theorem]{Proposition}
\newtheorem{lemma}[theorem]{Lemma}
\newtheorem{corollary}[theorem]{Corollary}
\newcounter{theoremintro}
\newaliascnt{theoremiCT}{theoremintro}
\newtheorem{theoremi}[theoremiCT]{Theorem}
\theoremstyle{definition}
\newtheorem{example}[theorem]{Example}
\newtheorem*{question}{Question}
\newtheorem{definition}[theorem]{Definition}
\newaliascnt{defiiCT}{theoremintro}
\newtheorem{defii}[defiiCT]{Definition}
\theoremstyle{remark}
\newtheorem{remark}[theorem]{Remark}
\numberwithin{equation}{section}
\numberwithin{theorem}{section}
\newcommand{\R}{\mathbb{R}}
\newcommand{\N}{\mathbb{N}}
\newcommand{\Z}{\mathbb{Z}}
\newcommand{\C}{\mathbb{C}}
\renewcommand{\tilde}{\widetilde}
\newcommand{\acts}{\curvearrowright}
\newcommand{\e}{\varepsilon}
\newcommand{\ssubset}{\subset\joinrel\subset}
\newcommand{\cstar}{C$^*$}
\title{Dynamical McDuff-type properties for group actions on von Neumann algebras}
\author{\begin{tabular}{ccc} Gábor Szabó\thanks{Supported by BOF project C14/19/088 funded by the research council of KU Leuven, and research project G085020N funded by the Research Foundation Flanders (FWO).} & \& & Lise Wouters\thanks{Supported by PhD-grant 11B6622N funded by the Research Foundation Flanders (FWO).} \end{tabular} \medskip\\
KU Leuven, Department of Mathematics, \\
Celestijnenlaan 200B, 3001 Leuven, Belgium \medskip\\ 
\begin{tabular}{ccc} \texttt{gabor.szabo@kuleuven.be} & & \texttt{lise.wouters@kuleuven.be} \end{tabular}}
\date{}
\begin{document}
\maketitle
\begin{abstract}
\noindent We consider the notion of strong self-absorption for continuous actions of locally compact groups on the hyperfinite II$_1$ factor and characterize when such an action is tensorially absorbed by another given action on any separably acting von Neumann algebra.
This extends the well-known McDuff property for von Neumann algebras and is analogous to the core theorems around strongly self-absorbing \cstar-dynamics.
Given a countable discrete group $G$ and an amenable action $G\curvearrowright M$ on any separably acting semi-finite von Neumann algebra, we establish a type of measurable local-to-global principle: If a given strongly self-absorbing $G$-action is suitably absorbed at the level of each fibre in the direct integral decomposition of $M$, then it is tensorially absorbed by the action on $M$.
As a direct application of Ocneanu's theorem, we deduce that if $M$ has the McDuff property, then every amenable $G$-action on $M$ has the equivariant McDuff property, regardless whether $M$ is assumed to be injective or not.
By employing Tomita--Takesaki theory, we can extend the latter result to the general case where $M$ is not assumed to be semi-finite.
\end{abstract}

\setcounter{tocdepth}{1}
\renewcommand{\baselinestretch}{0.5}\normalsize
\tableofcontents
\renewcommand{\baselinestretch}{1.00}\normalsize


\addcontentsline{toc}{section}{Introduction}
\section*{Introduction}

The classification problem for group actions on operator algebras has a long history dating back to the hallmark results of Connes \cite{Connes73, Connes74, Connes76} about injective factors that employed ideas of a dynamical nature in a crucial way.
From that point, the subsequent $\mathrm{W}^*$-algebraic developments with a dynamical flavour branched off into related but methodologically distinct lines of investigation.
There is on the one hand the classification of amenable group actions on injective factors, which was started by Connes \cite{Connes75, Connes77} for cyclic groups and continued in works of Jones \cite{Jones80}, Ocneanu \cite{Ocneanu85} and others \cite{SutherlandTakesaki89, KawahigashiSutherlandTakesaki92, KatayamaSutherlandTakesaki98, Masuda13}.
When the acting group is nonamenable, then the structure of its actions on the hyperfinite II$_1$ factor is already less managable, as a theorem of Jones \cite{Jones83oc} and other subsequent stronger \emph{no-go theorems} such as \cite{Popa06, BrothierVaes15} later demonstrated.
On the other hand, there is Popa's notion of amenability for subfactors of finite Jones index \cite{Jones83}, for their principal graph and standard invariant, introduced in several equivalent ways in \cite{Popa94, Popa97survey, Popa99, Popa23}, and his classification result showing that any subfactor of the hyperfinite II$_1$ factor that has amenable graph is completely determined by its standard invariant \cite{Popa95survey, Popa97survey, Popa99, Popa23}.
The latter result is also known to recover Ocneanu's theorem for actions of finitely generated amenable groups, as it has been observed in \cite[Subsection 5.1.5]{Popa94} that such actions can be encoded in the framework of finite index subfactors.
We shall leave it at the type II$_1$ case for this direction of classification, as giving a full account of further developments in the infinite case is beyond the scope of this introduction.
In terms of methodology, the major difference between these two branches of research is that while the former uses Connes' classification as a black box in conjunction with a \emph{model action splitting argument}, the methods developed for the latter recover the uniqueness of the injective II$_1$ factor as a consequence rather than needing it as an ingredient.
These two branches were later also unified into one common theory, encompassing the classification of group actions on subfactors, through works such as \cite{Popa92, Popa10}.

Connes' article \cite{Connes76} was the first of many influential works in operator algebras to make use of a kind of touchstone object (in his case the hyperfinite II$_1$ factor $\mathcal{R}$) and to begin the classification approach by showing that every object to be classified absorbs this touchstone object.
More specifically, Connes' approach begins with a structural result asserting that every injective II$_1$ factor is \emph{McDuff} --- i.e., tensorially absorbs $\mathcal R$; see \cite{McDuff70} --- which is then used further to show that each such factor is isomorphic to $\mathcal R$.
In Ocneanu's work to classify outer actions of an arbitrary countable amenable group $G$ on $\mathcal{R}$, he likewise proves at an early point in the theory that each such action (even without assuming outerness) absorbs the trivial $G$-action on $\mathcal R$ tensorially (in the terminology of that time, Ocneanu `split off the trivial action on $\mathcal R$'), which he then exploits for his actual classification theorem.
Although one would generally need injectivity of a factor to arrive at a satisfactory classification theory about (outer) $G$-actions on it, this early part of Ocneanu's theory works in general.
The precise statement and a (comparably) self-contained proof using the methods of this article, which we included for the reader's convenience, is contained in Theorem~\ref{theorem:model-absorption}.

If one is concerned with \cstar-algebraic considerations related to the \emph{equivariant Jiang--Su stability problem} (see \cite[Conjecture A]{Szabo21si}), the current methods always find a way to exploit Oceanu's aforementioned theorem in one form or another, usually involving to some degree Matui--Sato's property (SI) technique \cite{MatuiSato12acta, MatuiSato12, MatuiSato14, Sato19}.
Looking at the state-of-the-art at present \cite{GardellaHirshbergVaccaro, Wouters21}, the key difficulties arise from pushing these methods to the case where a group action $G\curvearrowright A$ on a \cstar-algebra induces a complicated $G$-action on the traces of $A$.
In particular, it is generally insufficient for such considerations to only understand $G$-actions on $\mathcal{R}$, but one rather needs to have control over $G$-actions on more general tracial von Neumann algebras.
This \cstar-algebraically motivated line of investigation led us to ask the following question that is intrinsic to von Neumann algebras:

\begin{question}
Let $G$ be a countable amenable group and $M$ a separably acting finite von Neumann algebra with $M\cong M\bar{\otimes}\mathcal R$.
Is it true that every action $\alpha: G\curvearrowright M$ is cocycle conjugate to $\alpha\otimes\operatorname{id}_{\mathcal R}: G\curvearrowright M\bar{\otimes}\mathcal R$?
\end{question}

Although Ocneanu's original results confirm this when $M$ is a factor,\footnote{While Ocneanu's work \cite{Ocneanu85} only contains an explicit proof for so-called centrally free actions $\alpha$, his comment following \cite[Theorem 1.2]{Ocneanu85} suggests an alternative approach to avoid this assumption. In several papers, the more general version without central freeness is also attributed to Ocneanu.} it turned out to be not so straightforward to resolve this question, despite common folklore wisdom in the subject suggesting that the factor case ought to imply the general case.
Some classification results in the literature \cite{JonesTakesaki84, SutherlandTakesaki85} imply that the above has a positive answer when $M$ is injective, but relying on this has two drawbacks.
Firstly, the question we are trying to answer is by design much weaker than a hard classification result, so it would be desirable to have a proof not relying on such a powerful theorem, in particular when an assumption such as injectivity may not even be needed.
Secondly, there is a subtle gap in the proof of \cite[Lemma 4.2]{SutherlandTakesaki85}.
We are indebted to Stefaan Vaes for pointing this out to us in the context of the above question and for outlining a sketch of proof on how to correct this, which became a sort of blueprint for the main result of the fourth section.

In contemporary research by \cstar-algebraists, the aforementioned results by Sutherland--Takesaki are still used to provide a partial answer to the above question, for example in \cite{GardellaHirshbergVaccaro}. In light of the previous discussion, the present article aims to give a self-contained and --- dare we say also relatively elementary --- approach to answer this question instead.
In fact we can treat it in greater generality than posed above, without restrictions on the type of $M$ and in the setting of amenable actions of arbitrary discrete groups.
The following can be viewed as our main result; see Theorem~\ref{thm:general-amenable-McDuff}.

\begin{theoremi} \label{theorem-A}
Let $G$ be a countable discrete group and $M$ a von Neumann algebra with separable predual such that $M \cong M\bar{\otimes} \mathcal{R}$. 
Then every amenable action $\alpha\colon G \acts M$ is cocycle conjugate to $\alpha \otimes \mathrm{id}_\mathcal{R}\colon G\curvearrowright M\bar{\otimes} \mathcal{R}$.
\end{theoremi}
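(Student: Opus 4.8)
The plan is to proceed in two stages: a reduction, via Tomita--Takesaki theory, to the case where $M$ is semi-finite, followed by a treatment of that case through disintegration of $\alpha$ over the centre of $M$, fibrewise absorption by means of Ocneanu's theorem, and a measurable gluing argument. For the reduction, fix a faithful normal state $\varphi$ on $M$ and pass to the continuous core $\tilde M = M \rtimes_{\sigma^\varphi} \R$, which is semi-finite with separable predual and carries the canonical trace-scaling $\R$-action $\theta$, subject to $\tilde M \rtimes_\theta \R \cong M \bar\otimes B(L^2(\R))$ by Takesaki duality. The action $\alpha$ extends canonically to $\tilde\alpha \colon G \acts \tilde M$ commuting with $\theta$, and amenability of $\alpha$ is inherited by $\tilde\alpha$ and, since $\R$ is amenable, by the combined $(G \times \R)$-action on $\tilde M$. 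As the continuous core does not depend on $\varphi$ and $\mathcal R$ carries trivial modular theory for its trace, $M \cong M \bar\otimes \mathcal R$ forces $\tilde M \cong \tilde M \bar\otimes \mathcal R$. Granting the semi-finite case for the $(G \times \R)$-action --- carrying the $\R$-part along as an auxiliary symmetry --- one obtains an $\R$-equivariant cocycle conjugacy between $\tilde\alpha$ and $\tilde\alpha \otimes \mathrm{id}_\mathcal R$; crossing by the $\R$-actions, applying Takesaki duality on both sides, and discarding the amplification by $B(L^2(\R))$ --- harmless because the equivariant McDuff property is characterised by an approximately central condition that is unaffected by that amplification --- yields the statement for $M$.

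So assume henceforth that $M$ is semi-finite. Since $M \cong M \bar\otimes \mathcal R$, the algebra $M$ has no type I part, and being semi-finite it has no type III part, hence it is of type II; disintegrate it as $M = \int_X^\oplus M_x \, d\mu(x)$ over $Z(M) = L^\infty(X,\mu)$, with almost every $M_x$ a type II factor with separable predual. Disintegrating the tensor splitting $M \cong M \bar\otimes \mathcal R$ itself shows that $M_x \cong M_x \bar\otimes \mathcal R$ for almost every $x$, so almost every fibre is a McDuff factor. The action $\alpha$ induces a non-singular $G$-action on $(X,\mu)$ together with a measurable field of isomorphisms $\alpha_g \colon M_x \to M_{gx}$; in particular, for almost every $x$ the stabiliser $G_x$ acts on the fibre $M_x$, and $\alpha$ is encoded by the orbit equivalence relation on $(X,\mu)$ together with this measurable field of stabiliser actions. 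Amenability of $\alpha$ descends to the induced action on $Z(M)$, so the $G$-action on $(X,\mu)$ is amenable; consequently almost every stabiliser $G_x$ is an amenable group, and each fibre action $G_x \acts M_x$ is amenable, being an action of an amenable group.

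For almost every $x$ we are then exactly in the setting of Ocneanu's theorem (Theorem~\ref{theorem:model-absorption}): an amenable action of the amenable group $G_x$ on the McDuff factor $M_x$, with no injectivity hypothesis imposed. Thus $G_x \acts M_x$ is cocycle conjugate to its tensor product with $\mathrm{id}_\mathcal R$, which is precisely the fibrewise hypothesis of the measurable local-to-global principle applied to the strongly self-absorbing datum given by the trivial $G$-action on $\mathcal R$. Invoking that principle produces a cocycle conjugacy between $\alpha$ and $\alpha \otimes \mathrm{id}_\mathcal R$, which finishes the semi-finite case and, together with the reduction above, the proof.

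The principal difficulty --- and the technical heart of the matter --- will be the measurable local-to-global principle used in the last step. One has to assemble the fibrewise cocycle conjugacies --- each obtained non-constructively and only up to the usual approximate-intertwining manipulations --- into a single cocycle conjugacy on $M$, arranging every auxiliary unitary, cocycle and $\e$-$\delta$ estimate to depend measurably and uniformly on the fibre while remaining compatible with the orbit equivalence relation on $(X,\mu)$; this is precisely the point at which the reasoning behind \cite[Lemma 4.2]{SutherlandTakesaki85} is incomplete. The attendant measure-theoretic bookkeeping --- measurability of the field of stabilisers and of the locus where Ocneanu's conclusion applies, a measurable selection of fibrewise conjugacies, and the availability of Ocneanu's theorem uniformly in the parameter $x$ --- is comparatively routine but still requires care.
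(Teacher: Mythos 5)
Your second stage --- the semi-finite case for a countable discrete group via disintegration over the centre, fibrewise absorption through Theorem~\ref{theorem:model-absorption}, and gluing by the measurable local-to-global principle of Theorem~\ref{thm:main_technical} --- is essentially the paper's Corollary~\ref{cor:McDuff-passes}. The genuine gap is in your first stage. After passing to the continuous core you correctly recognise that you must keep the dual flow as part of the symmetry, i.e.\ you need $\mathrm{id}_{\mathcal R}$ to be absorbed equivariantly by the $(G\times\R)$-action $\tilde\alpha\times\hat\sigma^\varphi$ on $\tilde M$. But the machinery you propose to apply to it is intrinsically discrete: Theorem~\ref{thm:main_technical} disintegrates over a \emph{discrete} measured transformation groupoid, and its proof rests on the Connes--Feldman--Weiss theorem and the cohomology Lemma~\ref{lemma:cohomology_lemma}, both of which concern \emph{countable} nonsingular equivalence relations; likewise the fibrewise input (Theorem~\ref{theorem:model-absorption}, via Masuda's Rokhlin theorem over F{\o}lner sets) is for countable discrete groups. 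None of this applies to $G\times\R$, so ``granting the semi-finite case for the $(G\times\R)$-action'' is granting something your own argument cannot deliver. Applying the discrete machinery to $\tilde\alpha\colon G\acts\tilde M$ alone does not help either, since the resulting cocycle conjugacy need not commute with $\hat\sigma^\varphi$ and therefore cannot be pushed back down to $M$ through Takesaki duality.

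The paper resolves this with a different architecture. The only statement transported through the continuous core is the abstract absorption criterion of Theorem~\ref{thm:general-McDuff} (Theorem~\ref{theorem-C}), which is deliberately proved for arbitrary second-countable locally compact groups precisely so that it applies to $G\times\R$: one only has to move a single equivariant unital $*$-homomorphism $(\mathcal R,\mathrm{id})\to(M_{\omega,\alpha},\alpha_\omega)$ into $(\tilde M_{\omega,\beta},\beta_\omega)$ (Proposition~\ref{prop:cp-centralizer}), not an entire disintegration argument. The amenable, discrete, measure-theoretic part is then run where the acting group is still the discrete $G$: in the proof of Theorem~\ref{thm:general-amenable-McDuff} one builds a separable, finite, $\alpha_\omega$-invariant McDuff subalgebra $N\subseteq M_\omega$ containing $\mathcal Z(M)$, applies Corollary~\ref{cor:McDuff-passes} to $\alpha_\omega|_N$, and reindexes to obtain the required embedding $\mathcal R\to (M_\omega)^{\alpha_\omega}$. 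To keep your outline you would have to prove a local-to-global principle for actions of $G\times\R$ (which the paper does not do and which would require new groupoid technology), or reorganise as above so that only the embedding criterion, and not the amenability argument, crosses the core. Your justification for discarding the $\mathcal B(L^2(\R))$ amplification is also too quick --- the asymptotic centralizer is not obviously insensitive to tensoring with a type I$_\infty$ factor; the paper instead splits off the finite summand and uses that $\mathrm{id}_{\mathcal B(\ell^2)}\otimes\alpha\simeq_{\mathrm{cc}}\alpha$ when $M$ is properly infinite.
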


As we will comment later (Remark~\ref{rem:compact-fails-A}), this phenomenon cannot be expected outside the discrete case and in fact the statement fails if we insert the circle group in place of $G$.
Along the way, our methodology employs dynamical variants of McDuff-type properties analogous to the theory of strongly self-absorbing \cstar-dynamics \cite{Szabo18ssa}, which can and is treated in the more general setting of continuous actions of locally compact groups; see Definitions~\ref{def:strong-absorption} and \ref{def:ssa-action}.

\begin{defii}
Let $G$ be a second-countable locally compact group.
An action $\delta: G\curvearrowright\mathcal R$ is called \emph{strongly self-absorbing}, if there exists an isomorphism $\Phi: \mathcal R\to\mathcal R\bar{\otimes}\mathcal R$, a $(\delta\otimes\delta)$-cocycle $\mathbb{U}: G\to\mathcal{U}(\mathcal R\bar{\otimes}\mathcal R)$ and a sequence of unitaries $v_n\in\mathcal{U}(\mathcal R\bar{\otimes}\mathcal R)$ such that
\[
v_n(x\otimes 1_{\mathcal R})v_n^* \to \Phi(x) \quad\text{and}\quad v_n(\delta\otimes\delta)_g(v_n)^* \to \mathbb{U}_g
\]
in the strong operator topology for all $x\in\mathcal R$ and $g\in G$, the latter uniformly over compact subsets in $G$.
\end{defii}

For such actions we prove the following dynamical generalization of McDuff's famous theorem \cite{McDuff70}; see Theorem~\ref{thm:general-McDuff}.

\begin{theoremi} \label{theorem-C}
Let $G$ be a second-countable locally compact group.
Let $\alpha: G \acts M$ be an action on a von Neumann algebra with separable predual and let $\delta: G \acts \mathcal{R}$ be a strongly self-absorbing action on the hyperfinite II$_1$ factor.
Then $\alpha$ is cocycle conjugate to $\alpha\otimes\delta$ if and only if there exists a unital equivariant $*$-homomorphism $(\mathcal R,\delta) \to (M_{\omega,\alpha},\alpha_\omega)$, where the latter denotes the induced $G$-action on the asymptotic centralizer algebra of $M$.
\end{theoremi}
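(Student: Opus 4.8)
The plan is to prove both directions, with the ``if'' direction carrying the real content and being handled via a cocycle-tracking intertwining argument of Elliott type between $(M,\alpha)$ and $(M\bar{\otimes}\mathcal{R},\alpha\otimes\delta)$. The ``only if'' direction is soft: assuming $\alpha$ is cocycle conjugate to $\alpha\otimes\delta$, and using that a cocycle conjugacy induces an isomorphism of the associated asymptotic centralizer algebras intertwining the induced $G$-actions, it suffices to produce a unital equivariant $*$-homomorphism $(\mathcal{R},\delta)\to\bigl((M\bar{\otimes}\mathcal{R})_{\omega,\alpha\otimes\delta},(\alpha\otimes\delta)_\omega\bigr)$. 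This follows by amplifying along $\operatorname{id}_M$ a ``central'' self-embedding of $(\mathcal{R},\delta)$ into the continuous part of an ultrapower $\mathcal{R}^\omega$ relative to a distinguished copy of $\mathcal{R}\subseteq\mathcal{R}\bar{\otimes}\mathcal{R}\cong\mathcal{R}$; the existence of such a self-embedding is essentially a restatement of the defining data $\Phi,\mathbb{U},(v_n)_n$ of a strongly self-absorbing action, once one untwists the cocycle $\mathbb{U}$ by the unitaries $v_n$.

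For the ``if'' direction, let $\theta\colon(\mathcal{R},\delta)\to(M_{\omega,\alpha},\alpha_\omega)$ be a unital equivariant $*$-homomorphism. First I would assemble from it the unital $*$-homomorphism $\Theta\colon M\bar{\otimes}\mathcal{R}\to M^\omega$ determined by $m\otimes r\mapsto m\,\theta(r)$; this is well defined since $\theta(\mathcal{R})$ is asymptotically central in $M$ and $\mathcal{R}$ is hyperfinite, its range lies in the $\alpha^\omega$-continuous part of $M^\omega$, and it is equivariant from $\alpha\otimes\delta$ to $\alpha^\omega$. By separability of the preduals and a reindexation argument (a Kirchberg-type $\varepsilon$-test), $\Theta$ is represented by a sequence of normal unital $*$-homomorphisms $\Theta_n\colon M\bar{\otimes}\mathcal{R}\to M$ such that $\bigl(\Theta_n(x)\bigr)_n$ represents $\Theta(x)$ for all $x$ and $\Theta_n$ asymptotically intertwines $\alpha\otimes\delta$ with $\alpha$ uniformly on compact subsets of $G$. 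One then runs a two-sided intertwining, where $\iota(m)=m\otimes 1$, along
\[
M\ \xrightarrow{\ \iota\ }\ M\bar{\otimes}\mathcal{R}\ \xrightarrow{\ \Theta_{n_1}\ }\ M\ \xrightarrow{\ \iota\ }\ M\bar{\otimes}\mathcal{R}\ \xrightarrow{\ \Theta_{n_2}\ }\ \cdots,
\]
interpolating the triangles by unitaries. The ``downward'' composites $\Theta_n\circ\iota$ are asymptotically $\operatorname{id}_M$ because $\Theta(m\otimes 1)=m$, so no cocycle obstruction appears there.

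The essential step is to show that the ``upward'' composites $\iota\circ\Theta_n\colon M\bar{\otimes}\mathcal{R}\to M\bar{\otimes}\mathcal{R}$ are, on finite subsets of a fixed $\sigma$-strong$^*$-dense sequence and on compact subsets of $G$, conjugate to $\operatorname{id}_{M\bar{\otimes}\mathcal{R}}$ by unitaries that form an approximate $(\alpha\otimes\delta)$-cocycle. For this one identifies $M\bar{\otimes}\mathcal{R}\cong M\bar{\otimes}\mathcal{R}\bar{\otimes}\mathcal{R}$ via the isomorphism $\Phi$, uses the ``only if'' technology to place a central equivariant copy of $(\mathcal{R},\delta)$ into the extra tensor factor, and carries out a McDuff-type unitary interpolation --- driven by the unitaries $v_n$ and governed by the cocycle $\mathbb{U}$ --- that absorbs the coordinate copy of $\mathcal{R}$ introduced by $\iota$ into that factor, compatibly with the $G$-action. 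Passing to the limit of the intertwining then yields an isomorphism $\Psi\colon M\xrightarrow{\ \cong\ }M\bar{\otimes}\mathcal{R}$, and accumulating the interpolating unitaries produces an $\alpha$-cocycle $w$ with $\Psi\circ\alpha_g=\operatorname{Ad}(w_g)\circ(\alpha\otimes\delta)_g\circ\Psi$ for all $g$; that is, $\alpha$ is cocycle conjugate to $\alpha\otimes\delta$.

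The main obstacle I anticipate is exactly this cocycle-controlled McDuff step. Making $\iota\circ\Theta_n$ approximately inner as an ordinary $*$-homomorphism is routine, but arranging the interpolating unitaries so that their deviation from being an $(\alpha\otimes\delta)$-cocycle vanishes in the limit is where the hypothesis ``strongly self-absorbing'' --- as opposed to merely ``$\delta$ being $\alpha$-absorbed'' --- is genuinely used, and where the cocycle $\mathbb{U}$ enters in an essential way. A secondary but pervasive technical point, and the reason the statement is phrased with $M_{\omega,\alpha}$ rather than a full centralizer, is that for noncompact $G$ the induced actions on ultrapowers need not be point-$\sigma$-strong$^*$ continuous, so all reindexation and limiting arguments must be carried out inside the relevant continuous subalgebras, with all estimates uniform only over compact subsets of $G$.
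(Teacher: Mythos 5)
Your overall architecture (a soft ``only if'' direction, and a hard ``if'' direction via an intertwining between $(M,\alpha)$ and $(M\bar{\otimes}\mathcal R,\alpha\otimes\delta)$ driven by the approximate-innerness data of $\delta$) is in the right spirit, but there are two genuine gaps.

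First, the claim that the equivariant map $\Theta\colon M\bar{\otimes}\mathcal R\to M^\omega$, $m\otimes r\mapsto m\,\theta(r)$, ``is represented by a sequence of normal unital $*$-homomorphisms $\Theta_n\colon M\bar{\otimes}\mathcal R\to M$'' is unjustified, and this is exactly where your two-sided Elliott intertwining breaks down. A unital $*$-homomorphism into an ultrapower does not in general lift to a sequence of $*$-homomorphisms into $M$: lifting $\theta$ on a matrix subalgebra $F\subset\mathcal R$ produces matrix units in $M$ that only \emph{asymptotically} commute with $M$, so the resulting maps $m\otimes e_{ij}\mapsto m f^{(n)}_{ij}$ are not multiplicative on $M\otimes F$, let alone on $M\bar{\otimes}\mathcal R$. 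The paper circumvents this by never lifting $\Theta$ at all: it only lifts \emph{unitaries} $u_n=\Phi(v_n\otimes 1_M)$ from the equicontinuous part of the ultrapower (unitaries always lift) and then runs a \emph{one-sided}, R{\o}rdam-type intertwining (Lemma~\ref{lemma:one-sided_intertwining}) against the single equivariant embedding $1_{\mathcal R}\otimes\operatorname{id}_M$, so that only the approximate centrality and approximate invariance of those unitaries ever need to be controlled. If you insist on a two-sided scheme you must downgrade your $\Theta_n$ to approximate morphisms and then prove that the limit of the interpolation is a normal $*$-isomorphism, which is precisely the difficulty the one-sided argument is designed to avoid.

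Second, your argument makes no provision for $M$ of type III, whereas the theorem is stated for arbitrary $M$ with separable predual. The limiting step of the intertwining (both in your sketch and in the paper's Lemma~\ref{lemma:one-sided_intertwining}) is controlled by a faithful normal semi-finite \emph{trace}: the limit isomorphism is produced from an isometry of GNS spaces $L^2(M,\tau_M)\to L^2(N,\tau_N)$, and the estimates use traciality on both sides. Without a trace this machinery is unavailable, and the paper instead passes to the continuous core $\tilde M=M\rtimes_{\sigma^\varphi}\mathbb R$, absorbs $\delta$ there as a $G\times\mathbb R$-action (this is the actual reason the whole theory is developed for non-discrete locally compact groups), and then descends via Takesaki--Takai duality together with the properly infinite reduction $\operatorname{id}_{\mathcal B(\ell^2(\N))}\otimes\alpha\simeq_{\mathrm{cc}}\alpha$. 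Some substitute for this reduction --- or a state-based $\|\cdot\|^\sharp_\phi$ intertwining in the style of Ocneanu or Masuda--Tomatsu, with the attendant two-sided norm bookkeeping --- is needed and is absent from your proposal.
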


Our initial methodology inspired by the theory of \cstar-dynamics is only well-suited to build all the aforementioned (and other related) theory in the setting of (actions on) semi-finite von Neumann algebras.
After the first preliminary section, the second and third section are dedicated to proving Theorem~\ref{theorem-C} in the special case of semi-finite von Neumann algebras.
The fourth section then builds on some of this theory, combined with the original ideas by Sutherland--Takesaki \cite{SutherlandTakesaki85} related to disintegrating a $G$-action to an action of its transformation groupoid induced on the center. This culminates in our main technical result of that section --- a kind of measurable local-to-global principle for absorbing a given strongly self-absorbing action, Theorem~\ref{thm:main_technical} --- which is then used to prove a stronger version of Theorem~\ref{theorem-A} for actions on semi-finite von Neumann algebras. 
 
The general main results are then obtained in the fifth section with the help of Tomita--Takesaki theory.
It is in this step that it becomes obvious why we want to treat Theorem~\ref{theorem-C} beyond the case of discrete groups.
Namely, if $\alpha\colon G\curvearrowright M$ is an action as in Theorem~\ref{theorem-A} on a von Neumann algebra that is not semi-finite, we may consider the extended action $\tilde{\alpha}\colon G\curvearrowright\tilde{M}$ on the (semi-finite) continuous core.
However, in order to conclude that $\alpha$ absorbs $\delta$ with the help of Tomita--Takesaki theory, it is not sufficient to argue that $\tilde{\alpha}$ absorbs $\delta$, but one actually needs to verify this absorption for certain enlargements of these actions to continuous actions of $G\times\mathbb{R}$, which in any case requires Theorem~\ref{theorem-C} for non-discrete groups.
Fortunately this can all be arranged to work and we end the last section with the proofs of our main results.


\section{Preliminaries}

Throughout the paper, $\omega$ denotes a fixed free ultrafilter on $\N$ and $G$ denotes a second-countable, locally compact group.
Let $M$ be a von Neumann algebra with predual $M_*$. For $x \in M$ and $\phi \in M_*$ we define elements $x\phi, \phi x$ and $[x,\phi] \in M_*$ by $(x\phi)(y) = \phi(yx)$, $(\phi x)(y) = \phi(xy)$ for all $y \in M$ and $[x,\phi] = x\phi - \phi x$. Moreover, for $x \in M$ and $\phi \in M_*$ we set $\|x\|_{\phi} = \phi(x^*x)^{1/2}$ and $\|x\|_{\phi}^\sharp = \phi(x^*x + xx^*)^{1/2}$. When $\phi$ is a faithful normal state, the norms $\|\cdot\|_\phi$ and $\|\cdot\|_\phi^\sharp$ induce the strong and strong-$*$ topology on bounded sets, respectively.
More generally, when $\phi$ is a normal weight on $M$, we define $\|x\|_{\phi}:= \phi(x^*x)^{1/2}$ for all $x$ contained in the left-ideal
\[\{x  \in M \mid \phi(x^*x) < \infty\}.\]
Recall that $M$ is called $\sigma$-finite if it admits a faithful normal state.
Throughout, the symbol $\mathcal R$ is used to denote the hyperfinite II$_1$ factor.
A von Neumann algebra $M$ is said to have the McDuff property, if $M$ is isomorphic to $M\bar{\otimes}\mathcal R$.

\subsection{Ultrapowers of von Neumann algebras}

We start with a reminder on the Ocneanu ultrapower of a von Neumann algebra and related concepts.
This originates in \cite[Section 2]{Connes74} and \cite[Section 5]{Ocneanu85}, but the reader is also referred to \cite{AndoHaagerup14} for a thorough exposition on ultrapower constructions.

\begin{definition}
Let $M$ be a $\sigma$-finite von Neumann algebra. We define the subset $\mathcal{I}_\omega(M) \subset \ell^\infty(M)$ by
\begin{align*}
\mathcal{I}_\omega(M) &= \{(x_n)_{n \in \N} \in \ell^\infty(M) \mid x_n \rightarrow 0 \text{ $*$-strongly as } n \rightarrow \omega\}\\
&= \{(x_n)_{n \in \N} \in \ell^\infty(M) \mid \lim_{n \rightarrow \omega}\|x_n\|_{\phi}^\sharp =0 \text{ for some faithful normal state } \phi \text{ on } M\}.
\end{align*}
Denote \[\mathcal{N}_\omega(M)=\{(x_n)_{n \in \N} \in \ell^\infty(M) \mid (x_n)_{n \in \N} \mathcal{I}_\omega(M) \subset \mathcal{I}_\omega(M), \text{ and } \mathcal{I}_\omega(M)(x_n)_{n \in \N} \subset \mathcal{I}_\omega(M)\},\] 
\[\mathcal{C}_\omega(M) =\{(x_n)_{n \in \N} \in \ell^\infty(M) \mid \lim_{n \rightarrow \omega}\|[x_n,\phi]\| = 0 \text{ for all } \phi \in M_*\}.\]
Then \[{\mathcal{I}_\omega(M) \subset \mathcal{C}_\omega(M) \subset \mathcal{N}_\omega(M)}.\] 
The \emph{Ocneanu} ultrapower $M^\omega$ of $M$ is defined as
\[M^\omega := \mathcal{N}_\omega(M)/\mathcal{I}_\omega(M),\]
and the \emph{asymptotic centralizer} $M_\omega$ of $M$ is defined as
\[M_\omega := \mathcal{C}_\omega(M)/\mathcal{I}_\omega(M).\]
These are both von Neumann algebras. 
Any faithful normal state $\phi$ on $M$ induces a faithful normal state $\phi^\omega$ on $M^\omega$ via the formula
\[\phi^\omega((x_n)_{n \in \N}) = \lim_{n \rightarrow \omega} \phi(x_n).\]
The restriction of $\phi^\omega$ to $M_\omega$ is a tracial state.
\end{definition}

\begin{remark}
Since the constant sequences are easily seen to be contained in $\mathcal{N}_\omega(M)$, one considers $M$ as a subalgebra of $M^\omega$.
If $\lim_{n \rightarrow \omega}\|[x_n,\phi]\| = 0$ for all $\phi \in M_*$, then $\lim_{n \rightarrow \omega}\|[x_n,y]\|_\phi^\sharp = 0$ for all $y \in M$ by  \cite[Proposition 2.8]{Connes74}.
  In this way we get a natural inclusion $M_\omega \subset M^\omega \cap M'$.
   That same proposition also shows that in order to check whether a sequence $(x_n)_n$ in $\ell^\infty(M)$ satisfies $\lim_{n \rightarrow \omega}\|[x_n,\psi]\| = 0$ for all $\psi \in M_*$, it suffices to check if this is true for just a single faithful normal state $\phi$ and to check if $\lim_{n \rightarrow \omega}\|[x_n,y]\|^\sharp_\phi =0$ for all $y \in M$.
    This shows that $M_\omega = M^\omega \cap M'$ whenever $M$ admits a faithful normal tracial state. The same is then true for all semi-finite von Neumann algebras with separable predual (for example by \cite[Lemma~2.8]{MasudaTomatsu16}).
\end{remark}

\begin{definition}
A continuous action $\alpha\colon G \acts M$ of a second-countable locally compact group on a von Neumann algebra is a homomorphism $G \to \mathrm{Aut}(M)$, $g \mapsto \alpha_g$ such that
\[
\lim_{g \rightarrow 1_G}\|\varphi \circ \alpha_g - \varphi\|=0 \text{ for all } \varphi \in M_*.
\]
By \cite[Proposition~X.1.2]{Takesaki03}, this is equivalent to the map being continuous for the point-weak-$*$ (or equivalently, point-weak, point-strong,$\hdots$) topology.
In most contexts we omit the word ``continuous'' as it will be implicitly understood that we consider some actions to be continuous.
In contrast, we will explicitly talk of an algebraic $G$-action when we are considering an action of $G$ viewed as a discrete group.
\end{definition}

Given an action $\alpha\colon G \acts M$, the induced algebraic actions $\alpha^\omega\colon G \rightarrow \mathrm{Aut}(M^\omega)$ and $\alpha_\omega \colon G \rightarrow \mathrm{Aut}(M_\omega)$ are usually not continuous.
The remainder of this subsection is devoted (for lack of a good literature reference) to flesh out the construction of their `largest' von Neumann subalgebras where the action is sufficiently well-behaved for our needs, called the \emph{$(\alpha, \omega)$-equicontinuous parts} (see Definition \ref{def:equicontinuous_parts}).
These constructions and proofs are based on \cite[Section 3]{MasudaTomatsu16}, where the special case $G=\R$ is considered. The general definition appeared first in \cite[2.1]{Tomatsu17}. 

\begin{definition}
Let $M$ be a $\sigma$-finite von Neumann algebra with an action $\alpha\colon G \acts M$.
Fix a faithful normal state $\phi$ on $M$. An element $(x_n)_{n \in \N} \in \ell^\infty(M)$ is called \emph{$(\alpha, \omega)$-equicontinuous} if for every $\e>0$, there exists a set $W \in \omega$ and an open neighborhood $1_G\in U \subset G$ such that
\[
\sup_{n\in W} \sup_{g\in U} \|\alpha_g(x_n) - x_n\|_\phi^\sharp < \e .
\]
We denote the set of $(\alpha, \omega)$-equicontinuous sequences by $\mathcal{E}^\omega_\alpha$.
\end{definition}

\begin{remark}
The definition above does not depend on the faithful normal state chosen.
Whenever $\phi$ and $\psi$ are two faithful normal states on $M$, one has for every $\e>0$ some $\delta>0$ such that for all $x \in (M)_1$, $\|x\|_\phi^\sharp < \delta$ implies $\|x\|_\psi^\sharp < \e$. 
\end{remark}

\begin{lemma}\label{lemma:epsilon_delta_sequences}
Let $M$ be a von Neumann algebra with faithful normal state $\phi$. For all $(x_n)_{n \in \N} \in \mathcal{N}_\omega(M)$ the following holds: 
For any $\e >0$ and compact set $\Psi \subset M_*^+$ there exists a $\delta >0$ and $W \in \omega$ such that if $y \in (M)_1$ and $\|y\|_\phi^\sharp< \delta$, then $\sup_{\psi\in \Psi} \|x_n y\|_{\psi}^\sharp  <\e$ and $\sup_{\psi\in \Psi} \|y x_n\|_{\psi}^\sharp  <\e$ for all $n \in W$. 
\end{lemma}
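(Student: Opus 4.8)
The plan is to argue by contradiction, producing from a hypothetical failure a sequence in $\mathcal{I}_\omega(M)$ whose left or right product with $(x_n)_n$ leaves $\mathcal{I}_\omega(M)$, which contradicts $(x_n)_n\in\mathcal{N}_\omega(M)$. Put $C:=\sup_n\|x_n\|<\infty$; we may assume $C>0$, as the statement is otherwise trivial. Assume then that there are $\e>0$ and a compact set $\Psi\subset M_*^+$ witnessing the failure: for every $\delta>0$ and every $W\in\omega$ there exist $y\in(M)_1$ with $\|y\|_\phi^\sharp<\delta$ and $n\in W$ with $\sup_{\psi\in\Psi}\|x_n y\|_\psi^\sharp\ge\e$ or $\sup_{\psi\in\Psi}\|y x_n\|_\psi^\sharp\ge\e$.

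For $k\in\N$ let $B_k$ be the set of those $n\in\N$ for which such a ``bad'' $y$ exists subject to the extra constraint $\|y\|_\phi^\sharp<1/k$. If $\N\setminus B_k$ were in $\omega$, then applying the failure hypothesis with $\delta=1/k$ and $W=\N\setminus B_k$ would produce an index $n\in\N\setminus B_k$ that lies in $B_k$ by definition; hence $B_k\in\omega$ for every $k$, and clearly $B_{k+1}\subseteq B_k$. Setting $r(n):=\max\big(\{k\le n:n\in B_k\}\cup\{0\}\big)$, one has $\{n:r(n)\ge k_0\}\supseteq B_{k_0}\setminus\{1,\dots,k_0-1\}\in\omega$ for each $k_0$, so $r(n)\to\infty$ along $\omega$, while $n\in B_1$ forces $r(n)\ge1$ and $n\in B_{r(n)}$. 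For $n\in B_1$ pick a witness $y_n\in(M)_1$ with $\|y_n\|_\phi^\sharp<1/r(n)$ and $\sup_{\psi\in\Psi}\|x_n y_n\|_\psi^\sharp\ge\e$ or $\sup_{\psi\in\Psi}\|y_n x_n\|_\psi^\sharp\ge\e$ (possible since $n\in B_{r(n)}$), and set $y_n:=0$ for $n\notin B_1$. Then $(y_n)_n\in\ell^\infty(M)$ with $\lim_{n\to\omega}\|y_n\|_\phi^\sharp=0$, so $(y_n)_n\in\mathcal{I}_\omega(M)$; since $(x_n)_n\in\mathcal{N}_\omega(M)$, both $(x_n y_n)_n$ and $(y_n x_n)_n$ lie in $\mathcal{I}_\omega(M)$.

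One of the two sets $\{n\in B_1:\sup_{\psi\in\Psi}\|x_n y_n\|_\psi^\sharp\ge\e\}$ and $\{n\in B_1:\sup_{\psi\in\Psi}\|y_n x_n\|_\psi^\sharp\ge\e\}$ lies in $\omega$ (their union is $B_1$, as each $y_n$ is a witness); I treat the first, the second being symmetric. For each such $n$ choose $\psi_n\in\Psi$ attaining $\sup_{\psi\in\Psi}\|x_n y_n\|_\psi^\sharp$, which is legitimate because $\psi\mapsto\|z\|_\psi^\sharp$ is norm-continuous and $\Psi$ is compact. As $\Psi$ is a compact metric space, $\psi_\infty:=\lim_{n\to\omega}\psi_n$ exists in $\Psi$. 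Applying the crude bound $|\psi_n(a)-\psi_\infty(a)|\le\|\psi_n-\psi_\infty\|\,\|a\|$ to $a=(x_n y_n)^*(x_n y_n)+(x_n y_n)(x_n y_n)^*$, whose norm is at most $2C^2$, gives $(\|x_n y_n\|_{\psi_\infty}^\sharp)^2\ge\e^2-2C^2\|\psi_n-\psi_\infty\|$, which is $\ge\e^2/2$ on the set $\{n:\|\psi_n-\psi_\infty\|<\e^2/(4C^2)\}\in\omega$, so $\|x_n y_n\|_{\psi_\infty}^\sharp\ge\e/\sqrt{2}$ there. On the other hand, $(x_n y_n)_n\in\mathcal{I}_\omega(M)$ is a bounded sequence converging to $0$ $*$-strongly along $\omega$, and since the $*$-strong topology on bounded sets is generated by the seminorms $\|\cdot\|_\psi$ and $\|(\cdot)^*\|_\psi$ for $\psi\in M_*^+$, this forces $\lim_{n\to\omega}\|x_n y_n\|_{\psi_\infty}^\sharp=0$. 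These conclusions are incompatible, so the assumed failure is impossible.

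I expect the main obstacle to be the combinatorial bookkeeping in the middle step: organizing the countably many sets $B_k\in\omega$ into a single sequence $(y_n)_n\in\mathcal{I}_\omega(M)$ that still exhibits the ``bad'' lower bound along a set in $\omega$. The other, milder point is the uniformity over the compact $\Psi$, which disappears once the index-dependent worst functionals $\psi_n$ are replaced by their $\omega$-limit $\psi_\infty$, at the cost of a perturbation controlled by $C=\sup_n\|x_n\|$; and the only genuine analytic input -- that $*$-strong nullity of a bounded sequence implies nullity in each seminorm $\|\cdot\|_\psi^\sharp$, $\psi\in M_*^+$ -- is immediate from the description of the $*$-strong topology on bounded sets.
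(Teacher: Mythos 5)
Your proof is correct, and at its core it is the same argument as the paper's: assume failure, manufacture from the bad witnesses a sequence in $\mathcal{I}_\omega(M)$ whose products with $(x_n)_n$ stay uniformly bounded away from zero in some $\|\cdot\|_\psi^\sharp$, and contradict $(x_n)_n\in\mathcal{N}_\omega(M)$. The two places where you deviate are worth recording. First, the combinatorial organization of the negation: the paper starts from the assumption that for each $k$ there is a \emph{single} $y_k$ with $\|y_k\|_\phi^\sharp<1/k$ whose bad set $A_k$ lies in $\omega$, and then interleaves the $y_k$ along the decreasing sets $W_k=A_1\cap\dots\cap A_k\cap[k,\infty)$. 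You instead work with the honest negation (in which the bad $y$ may depend on both $\delta$ and $W$), show via the ultrafilter dichotomy that the sets $B_k$ of indices admitting \emph{some} bad witness at scale $1/k$ lie in $\omega$, and then choose a witness $y_n$ separately for each index $n$. This is slightly more work but is actually the more careful route: the paper's starting hypothesis is formally stronger than the literal negation of the lemma's conclusion (a priori each individual $y$ could have $A(y)\notin\omega$ while the conclusion still fails), and your $B_k$ device is precisely what closes that gap. Second, the uniformity over the compact set $\Psi$: the paper first gets $\lim_{n\to\omega}\|x_nz_n\|_\psi^\sharp=0$ for each fixed $\psi$ and then upgrades to the supremum over $\Psi$ using compactness (implicitly, the uniform Lipschitz dependence of $\psi\mapsto\|z\|_\psi^\sharp$ on $\psi$ for bounded $z$), whereas you extract the $\omega$-limit $\psi_\infty$ of the maximizing functionals and run the contradiction at that single functional, at the cost of the harmless perturbation $2C^2\|\psi_n-\psi_\infty\|$. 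Both mechanisms rest on the same quantitative fact; yours has the small advantage of needing the $\mathcal{I}_\omega$-nullity only at one functional.
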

\begin{proof} We prove this by contradiction. 
Suppose that there exists $\e >0$ and a compact set $\Psi \subset M_*^+$ such that for any $k \in \N$ there exists a $y_k \in (M)_1$ with $\|y_k\|_\phi^\sharp < 1/k$ but the following set belongs to $\omega$: 
\[
A_k := \Big\{ n \in\N \ \Big| \ \sup_{\psi \in \Psi}\|x_ny_k\|_\psi^\sharp\geq \e \text{ or } \sup_{\psi \in \Psi}\|y_k x_n\|_\psi^\sharp \geq \e \Big\}.
\]
Define $W_0 := \N$ and $W_k := A_1 \cap \hdots \cap A_k \cap [k, \infty)$ for $k \geq 1$. 
These all belong to $\omega$. 
For each $n \in \N$ define $k(n)$ as the unique number $k \geq 0$ with $n \in W_k \setminus W_{k+1}$. Put $z_n := y_{k(n)}$ if $k(n) \geq 1$, else put $z_n:=1_M$. 
Note that for all $n \in W_m$ we get that $\|z_n\|_\phi^\sharp = \|y_{k(n)}\|_\phi^\sharp <\frac{1}{k(n)} \leq \frac{1}{m}$. 
Therefore, it holds that $(z_n)_{n\in \N} \in \mathcal{I}_\omega(M)$. 
Since $(x_n)_{n \in \N} \in \mathcal{N}_\omega(M)$, it follows that also $(x_nz_n)_{n\in \N}$ and $(z_nx_n)_{n\in \N}$ belong to $\mathcal{I}_\omega(M)$. 
Hence we get that for all $\psi \in \Psi$ 
\[\lim_{n \rightarrow \omega} \left(\|x_nz_n\|_\psi^\sharp + \|z_nx_n\|_\psi^\sharp\right) = 0.\]
Since $\Psi$ is compact, we also have 
\[\lim_{n \rightarrow \omega} \sup_{\psi \in \Psi}\left(\|x_nz_n\|_\psi^\sharp + \|z_nx_n\|_\psi^\sharp\right) = 0.\]
This gives a contradiction, since our choice of $z_n$ implies that for all $n \in W_1$ 
\[\sup_{\psi \in \Psi}\left(\|x_nz_n\|_\psi^\sharp + \|z_nx_n\|_\psi^\sharp\right)\geq \e.\]
\end{proof}

\begin{lemma}
Let $M$ be a $\sigma$-finite von Neumann algebra with action $\alpha:G \acts M$.
For any two sequences $(x_n)_{n \in \N}, (y_n)_{n \in \N} \in  \mathcal{E}_\alpha^\omega \cap \mathcal{N}_\omega(M)$ it follows that $(x_ny_n)_{n\in \N} \in \mathcal{E}_\alpha^\omega$. 
\end{lemma}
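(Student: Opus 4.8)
The plan is to reduce the whole estimate to the single-state bound provided by Lemma~\ref{lemma:epsilon_delta_sequences}. We may assume $C_x:=\sup_n\|x_n\|>0$ and $C_y:=\sup_n\|y_n\|>0$, since otherwise $(x_ny_n)_n$ is the zero sequence; note also that $(x_ny_n)_n\in\ell^\infty(M)$ is clear. Fix a faithful normal state $\phi$ and $\e>0$; the goal is to produce $W\in\omega$ and an open neighbourhood $U$ of $1_G$ with $\|\alpha_g(x_ny_n)-x_ny_n\|_\phi^\sharp<\e$ for all $n\in W$ and $g\in U$. The starting point is the algebraic identity
\[
\alpha_g(x_ny_n)-x_ny_n=(\alpha_g(x_n)-x_n)\,y_n+(\alpha_g(x_n)-x_n)(\alpha_g(y_n)-y_n)+x_n(\alpha_g(y_n)-y_n),
\]
and I would bound each of the three summands by $\e/3$.

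The middle summand is the easy one: from the elementary inequality $\|ab\|_\phi\le\|a\|\,\|b\|_\phi$ one gets
\[
\|(\alpha_g(x_n)-x_n)(\alpha_g(y_n)-y_n)\|_\phi^\sharp\le 2C_x\,\|\alpha_g(y_n)-y_n\|_\phi^\sharp+2C_y\,\|\alpha_g(x_n)-x_n\|_\phi^\sharp,
\]
which is $<\e/3$ once $n$ lies in a suitable member of $\omega$ and $g$ in a suitable neighbourhood of $1_G$, using only that $(x_n)_n,(y_n)_n\in\mathcal{E}_\alpha^\omega$. The two outer summands are where the fact that $\phi$ need not be tracial becomes the real issue: in $(\alpha_g(x_n)-x_n)y_n$ and $x_n(\alpha_g(y_n)-y_n)$ we multiply a factor that is small in $\|\cdot\|_\phi^\sharp$ against a merely bounded factor positioned on the ``wrong'' side for a naive Cauchy--Schwarz estimate, and this is exactly the situation Lemma~\ref{lemma:epsilon_delta_sequences} is designed for, exploiting that $(x_n)_n,(y_n)_n\in\mathcal{N}_\omega(M)$. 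For $x_n(\alpha_g(y_n)-y_n)$ I would apply the lemma to the sequence $(x_n)_n$ with $\Psi=\{\phi\}$ and tolerance $\e/(6C_y)$ to obtain $\delta>0$ and $W'\in\omega$; then equicontinuity of $(y_n)_n$ supplies $W''\in\omega$ and an open $U'\ni 1_G$ with $\|\alpha_g(y_n)-y_n\|_\phi^\sharp<2C_y\delta$ for $n\in W''$, $g\in U'$, so feeding the normalized difference $z:=(2C_y)^{-1}(\alpha_g(y_n)-y_n)\in(M)_1$ into the $\|x_n z\|_\phi^\sharp$ half of the lemma's conclusion yields $\|x_n(\alpha_g(y_n)-y_n)\|_\phi^\sharp<\e/3$ on $W'\cap W''$ and $U'$. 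The summand $(\alpha_g(x_n)-x_n)y_n$ is handled symmetrically, applying the lemma to $(y_n)_n$, invoking instead the $\|z\,x_n\|_\phi^\sharp$ half of its conclusion, and using equicontinuity of $(x_n)_n$. Intersecting the finitely many resulting members of $\omega$ and neighbourhoods of $1_G$ finishes the argument.

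The one genuinely delicate point is the book-keeping hidden in the word ``feeding'' above: Lemma~\ref{lemma:epsilon_delta_sequences} is phrased for a single fixed small element, whereas our small factor $\alpha_g(x_n)-x_n$ (resp.\ $\alpha_g(y_n)-y_n$) depends on both $n$ and $g$. The resolution is that for each individually fixed $n$ in the set $W'$ (or $W''$) and each fixed $g$ in the relevant neighbourhood, the normalized difference is a bona fide fixed element of the unit ball with $\|\cdot\|_\phi^\sharp<\delta$, to which the lemma applies, and its conclusion holds at \emph{all} indices in the corresponding member of $\omega$ — in particular at that same $n$ — so no circularity arises. Beyond this index matching and the routine tracking of the constants $C_x,C_y$, everything reduces to the triangle inequality for $\|\cdot\|_\phi^\sharp$ and the inequality $\|ab\|_\phi\le\|a\|\,\|b\|_\phi$.
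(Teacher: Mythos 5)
Your proof is correct; it rests on the same two pillars as the paper's argument (equicontinuity of each factor plus Lemma~\ref{lemma:epsilon_delta_sequences} to multiply a $\|\cdot\|_\phi^\sharp$-small element against a merely bounded member of a sequence in $\mathcal{N}_\omega(M)$), but the decomposition and the way the lemma is invoked differ. You split $\alpha_g(x_ny_n)-x_ny_n$ symmetrically into three terms, with the cross term $(\alpha_g(x_n)-x_n)(\alpha_g(y_n)-y_n)$ disposed of by the submultiplicative estimate $\|ab\|_\phi^\sharp\le\|a\|\,\|b\|_\phi^\sharp+\|b\|\,\|a\|_\phi^\sharp$, and the two outer terms handled by Lemma~\ref{lemma:epsilon_delta_sequences} applied with the singleton $\Psi=\{\phi\}$ — once to $(x_n)_n$ and once to $(y_n)_n$, using the two halves of its conclusion. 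The paper instead uses the telescoping two-term decomposition $\alpha_g(x_n)(\alpha_g(y_n)-y_n)+(\alpha_g(x_n)-x_n)y_n$ and rewrites the first term as $\|x_n(\alpha_{g^{-1}}(y_n)-y_n)\|^\sharp_{\phi\circ\alpha_g}$, which is why Lemma~\ref{lemma:epsilon_delta_sequences} is stated for a compact family of states $\Psi=\{\phi\circ\alpha_g\}_{g\in K}$ rather than a single one. Your route buys a proof that never needs the compact-$\Psi$ feature of that lemma (a singleton suffices) and avoids transporting the state by $\alpha_g$, at the cost of one extra, elementary summand; the paper's route is shorter once the stronger lemma is in hand. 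Your handling of the index/quantifier order — fixing $n$ and $g$, noting that the normalized difference is then a genuine element of the unit ball with $\|\cdot\|_\phi^\sharp<\delta$, and that the lemma's conclusion holds at every index of the uniform set $W'$, in particular at that same $n$ — is exactly the right justification and introduces no circularity.
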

\begin{proof}
Without loss of generality we may assume $\sup_{n \in \N} \|x_n\| \leq \frac{1}{2}$ and $\sup_{n \in \N} \|y_n\| \leq \frac{1}{2}$.
Fix a faithful normal state $\phi$ on $M$. Let $K \subset G$ be a compact neighbourhood of the neutral element.
Take $\e >0$ arbitrarily.
By Lemma \ref{lemma:epsilon_delta_sequences} there exists $\delta>0$ and $W_1 \in \omega$ such that for every $z \in (M)_1$ with $\|z\|_\phi^\sharp < \delta$ one has 
\[
\sup_{g \in K}\|x_n z\|_{\phi \circ \alpha_g}^\sharp< \frac{\e}{2} \text{ and } \|zy_n\|_\phi^\sharp < \frac{\e}{2} \text{ for all } n \in W_1.
\] 
Since $(x_n)_{n\in \N}$ and $(y_n)_{n\in \N}$ both belong to $\mathcal{E}_\alpha^\omega$, we can find an open $U \subset K$ containing the neutral element, and a $W_2 \in \omega$ such that
\[
\sup_{n \in W_2}\sup_{ g \in U}\|\alpha_g(x_n) - x_n\|_\phi^\sharp < \delta \text{, and}
\]
\[\sup_{n \in W_2}\sup_{ g \in U}
\|\alpha_{g^{-1}}(y_n) - y_n\|_\phi^\sharp < \delta.
\]
Then for $g \in U$ and $n \in W_1 \cap W_2$ we have
\begin{align*}
\|\alpha_g(x_n) \alpha_g(y_n) - x_ny_n\|_\phi^\sharp &\leq \|\alpha_g(x_n)(\alpha_g(y_n) - y_n)\|_\phi^\sharp + \|(\alpha_g(x_n) - x_n)y_n\|_\phi^\sharp\\
&= \|x_n(\alpha_{g^{-1}}(y_n) - y_n)\|_{\phi \circ \alpha_g}^\sharp + \|(\alpha_g(x_n) - x_n)y_n\|_\phi^\sharp\\
& < \frac{\e}{2} + \frac{\e}{2} = \e.
\end{align*}
This ends the proof.
\end{proof}

\begin{lemma}
Let $M$ be a $\sigma$-finite von Neumann algebra with an action $\alpha: G \acts M$.
Then:
\begin{enumerate}[label=\textup{(\arabic*)},leftmargin=*]
\item \label{lem:equicont-algebra:1}
Suppose $(x_n)_{n\in \N}, (y_n)_{n\in \N} \in \ell^\infty(M)$ satisfy $(x_n-y_n)_{n\in \N} \in \mathcal{I}_\omega(M)$.
Then $(x_n)_{n\in \N} \in \mathcal{E}^\omega_\alpha$ if and only if $(y_n)_{n\in \N} \in \mathcal{E}^\omega_\alpha$.
\item \label{lem:equicont-algebra:2}
$\mathcal{E}_\alpha^\omega \cap \mathcal{N}_\omega(M)$ is an $\alpha$-invariant C$^*$-subalgebra of $\ell^\infty(M)$. 
\end{enumerate}
\end{lemma}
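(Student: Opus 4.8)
Both parts are organizational in nature, and the plan is to reduce them to the norm-continuity of the action $\alpha$ together with the product lemma proved immediately above. Throughout I fix a faithful normal state $\phi$ on $M$.

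For part~\ref{lem:equicont-algebra:1}, write $z_n := x_n - y_n$, so that $(z_n)_n \in \mathcal{I}_\omega(M)$, i.e.\ $\lim_{n\to\omega}\|z_n\|_\phi^\sharp = 0$; by symmetry it suffices to prove that $(x_n)_n \in \mathcal{E}_\alpha^\omega$ implies $(y_n)_n \in \mathcal{E}_\alpha^\omega$. For every $g \in G$ and $n \in \N$ the triangle inequality for $\|\cdot\|_\phi^\sharp$ gives
\[
\|\alpha_g(y_n)-y_n\|_\phi^\sharp \ \leq\ \|\alpha_g(z_n)\|_\phi^\sharp + \|\alpha_g(x_n)-x_n\|_\phi^\sharp + \|z_n\|_\phi^\sharp .
\]
The middle term can be made $< \e/3$ on a set $W_2 \in \omega$ and an open neighborhood $U_2 \ni 1_G$ by the hypothesis $(x_n)_n \in \mathcal{E}_\alpha^\omega$, and the last term can be made $< \e/3$ on a set $W_3 \in \omega$ because $(z_n)_n \in \mathcal{I}_\omega(M)$. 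For the first term I use the identity
\[
\big(\|\alpha_g(z_n)\|_\phi^\sharp\big)^2 = \big(\|z_n\|_{\phi\circ\alpha_g}^\sharp\big)^2 = \big(\|z_n\|_\phi^\sharp\big)^2 + (\phi\circ\alpha_g-\phi)(z_n^*z_n+z_nz_n^*) \leq \big(\|z_n\|_\phi^\sharp\big)^2 + 2\,\|\phi\circ\alpha_g-\phi\|\,\sup_m\|z_m\|^2 ,
\]
so after shrinking to a small enough open $U_1 \ni 1_G$ using $\lim_{g\to 1_G}\|\phi\circ\alpha_g-\phi\| = 0$ (which holds since $\alpha$ is a continuous action) this term too is $< \e/3$ for all $n$ and all $g \in U_1$. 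Intersecting $W_2 \cap W_3$ in $\omega$ and $U_1 \cap U_2$ then witnesses $(y_n)_n \in \mathcal{E}_\alpha^\omega$. (Alternatively, one may invoke the compactness of $\{\phi\circ\alpha_g : g \in K\}$ for a compact neighborhood $K$ of $1_G$, exactly as in the proof of Lemma~\ref{lemma:epsilon_delta_sequences}, to obtain $\lim_{n\to\omega}\sup_{g\in K}\|\alpha_g(z_n)\|_\phi^\sharp = 0$ directly.)

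For part~\ref{lem:equicont-algebra:2} I would assemble the claim from three ingredients. First, $\mathcal{N}_\omega(M)$ is a $C^*$-subalgebra of $\ell^\infty(M)$, invariant under the coordinatewise $G$-action (still denoted $\alpha$); this is standard and already implicit in the construction of $M^\omega$ and $\alpha^\omega$, the invariance reducing to the fact that each $\alpha_h$ maps $\mathcal{I}_\omega(M)$ onto itself, which follows from $\|\alpha_h(a_n)\|_\phi^\sharp = \|a_n\|_{\phi\circ\alpha_h}^\sharp$ together with the state-independence of $\mathcal{I}_\omega(M)$. Second, $\mathcal{E}_\alpha^\omega$ is a norm-closed, $*$-closed, $\alpha$-invariant linear subspace of $\ell^\infty(M)$: linearity and $*$-invariance are immediate from the subadditivity of $z \mapsto \|\alpha_g(z)-z\|_\phi^\sharp$ and from $\|z^*\|_\phi^\sharp = \|z\|_\phi^\sharp$; norm-closedness follows from $\|z\|_\psi^\sharp \leq \sqrt2\,\|z\|$ for every state $\psi$ (used with $\psi = \phi$ and $\psi = \phi\circ\alpha_g$), which shows that a norm-$\e$ perturbation of a sequence changes $\sup_{g}\|\alpha_g(\,\cdot\,)-\,\cdot\,\|_\phi^\sharp$ by at most $2\sqrt2\,\e$; and $\alpha$-invariance follows from the identity
\[
\alpha_g(\alpha_h(x_n)) - \alpha_h(x_n) = \alpha_h\big(\alpha_{h^{-1}gh}(x_n)-x_n\big),
\]
so that $\|\alpha_g(\alpha_h(x_n))-\alpha_h(x_n)\|_\phi^\sharp = \|\alpha_{h^{-1}gh}(x_n)-x_n\|_{\phi\circ\alpha_h}^\sharp$, combined with the fact --- from the remark following the definition of $\mathcal{E}_\alpha^\omega$ --- that the defining condition may be tested with the faithful normal state $\phi\circ\alpha_h$, and with the observation that $g\mapsto h^{-1}gh$ is a homeomorphism of $G$ fixing $1_G$, so a neighborhood witnessing equicontinuity of $(x_n)_n$ can be pulled back. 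Third, $\mathcal{E}_\alpha^\omega \cap \mathcal{N}_\omega(M)$ is closed under multiplication, which is precisely the content of the preceding lemma (together with $\mathcal{N}_\omega(M)$ being an algebra). Combining the three, $\mathcal{E}_\alpha^\omega \cap \mathcal{N}_\omega(M)$ is an $\alpha$-invariant $C^*$-subalgebra of $\ell^\infty(M)$.

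The only genuinely delicate point is the bookkeeping around replacing the fixed state $\phi$ by its translates $\phi\circ\alpha_g$ (in part~\ref{lem:equicont-algebra:1}) and $\phi\circ\alpha_h$ (in the $\alpha$-invariance of $\mathcal{E}_\alpha^\omega$); this is precisely where the continuity of $\alpha$ and the state-independence of the definitions enter. Everything else is routine manipulation of ultrafilter limits and $C^*$-algebraic closure properties, and I anticipate no serious obstacle.
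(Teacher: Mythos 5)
Your proof is correct and follows essentially the same route as the paper's: part (1) via the triangle inequality plus control of $\|\alpha_g(x_n-y_n)\|_\phi^\sharp$ through the norm-continuity of $g\mapsto\phi\circ\alpha_g$ (the paper packages this instead as compactness of $\{\phi\circ\alpha_g\}_{g\in K}$, which you yourself note as an alternative), and part (2) via the preceding product lemma together with the conjugation identity $\|\alpha_g(\alpha_h(x_n))-\alpha_h(x_n)\|_\phi^\sharp=\|\alpha_{h^{-1}gh}(x_n)-x_n\|_{\phi\circ\alpha_h}^\sharp$ and the state-independence of the definition. The only nitpick is the phrase ``for all $n$'' when bounding the first term in part (1): the estimate $(\|\alpha_g(z_n)\|_\phi^\sharp)^2\leq(\|z_n\|_\phi^\sharp)^2+2\|\phi\circ\alpha_g-\phi\|\sup_m\|z_m\|^2$ only yields smallness for $n$ in a set of the ultrafilter where $\|z_n\|_\phi^\sharp$ is small, but your final intersection with $W_3$ repairs this automatically.
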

\begin{proof} Fix a faithful normal state $\phi$ on $M$.
We first prove \ref{lem:equicont-algebra:1}.
Let $\e>0$.
We can choose $W \in \omega$ and an open neighborhood $1_G \in U \subset G$ such that 
\[
\sup_{n\in W} \sup_{g\in U} \|\alpha_g(x_n) - x_n\|_\phi^\sharp < \frac{\e}{2}.
\] 
Without loss of generality we may assume that $K=\overline{U}$ is compact.
Consider $s_n := \sup_{g \in K} \|x_n - y_n\|_{\phi \circ \alpha_g}^\sharp$. Since $K$ is compact and ${(x_n-y_n)_{n\in \N} \in \mathcal{I}_\omega(M)}$, we have $\lim_{n \rightarrow \omega} s_n = 0$.
Hence, after possibly replacing $W$ by a smaller set in the ultrafilter, we can assume that $s_n < \e/4$ for all $n \in W$.
We may conclude for all $g \in U$ and $n \in W$ that
\begin{align*}
\|\alpha_g(y_n) - y_n\|_\phi^\sharp &\leq \|\alpha_g(y_n) - \alpha_g(x_n)\|_\phi^\sharp + \|\alpha_g(x_n)- x_n\|_\phi^\sharp + \|x_n - y_n\|_\phi^\sharp\\
&\leq 2s_n + \frac{\e}{2} < \e.
\end{align*}
Since $\e>0$ was arbitrary, $(y_n)_{n\in \N}$ belongs to $\mathcal{E}_\alpha^\omega$.

Let us prove \ref{lem:equicont-algebra:2}.
Clearly $\mathcal{E}_\alpha^\omega$ is a $*$-closed, norm-closed linear subspace of $\ell^\infty(M)$. 
The previous lemma shows that $\mathcal{E}_\alpha^\omega \cap \mathcal{N}_\omega(M)$ is closed under multiplication. 
To see that $\mathcal{E}_\alpha^\omega$ is $\alpha$-invariant, take $(x_n)_{n \in \N} \in \mathcal{E}_\alpha^\omega$ and $h \in G$. Take $\e >0$. 
We can find an open neighborhood $1_g \in U \subset G$ and $W \in \omega$ such that one has
\[\sup_{n \in W}\sup_{g \in U}\|\alpha_g(x_n) - x_n\|_{\phi \circ \alpha_h}^\sharp < \e.\] 
Then for all $g \in hUh^{-1}$ and $n \in W$ we observe
\[
\|\alpha_g(\alpha_h(x_n)) - \alpha_h(x_n)\|_\phi^\sharp = \|\alpha_{h^{-1}gh}(x_n) - x_n\|_{\phi \circ \alpha_h}^\sharp < \e.
\]
This shows that $(\alpha_h(x_n))_{n\in \N} \in \mathcal{E}_\alpha^\omega$.
\end{proof}

\begin{definition}\label{def:equicontinuous_parts}
Let $M$ be a $\sigma$-finite von Neumann algebra with an action $\alpha\colon G \acts M$. 
We define ${M_\alpha^\omega := (\mathcal{E}_\alpha^\omega \cap \mathcal{N}_\omega(M))/\mathcal{I}_\omega}$ and ${M_{\omega, \alpha}:= M_\alpha^\omega \cap M_\omega}$. 
We call them the \emph{$(\alpha,\omega)$-equicontinuous parts} of $M^\omega$ and $M_\omega$, respectively. 
\end{definition}

\begin{lemma}
Let $M$ be a $\sigma$-finite von Neumann algebra with an action $\alpha\colon G \acts M$.
Then $M_\alpha^\omega$ and $M_{\omega, \alpha}$ are von Neumann algebras.  
\end{lemma}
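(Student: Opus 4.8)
The plan is to show that $\mathcal{E}^\omega_\alpha \cap \mathcal{N}_\omega(M)$ is not just a C$^*$-subalgebra of $\ell^\infty(M)$ (as established in the previous lemma) but is closed under the relevant bounded strong-$*$ limits, so that after quotienting by $\mathcal{I}_\omega(M)$ we obtain a von Neumann algebra. Since $M^\omega = \mathcal{N}_\omega(M)/\mathcal{I}_\omega(M)$ and $M_\omega = \mathcal{C}_\omega(M)/\mathcal{I}_\omega(M)$ are already known to be von Neumann algebras, and $M^\omega_\alpha$ and $M_{\omega,\alpha}$ are $*$-subalgebras of these containing the unit, it suffices to verify that each is closed in the strong-$*$ operator topology of its ambient ultrapower (equivalently, that each is closed under the $\sigma$-weak topology, by the Kaplansky density / bicommutant philosophy). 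For $M_{\omega,\alpha} = M^\omega_\alpha \cap M_\omega$ this then follows immediately once we have it for $M^\omega_\alpha$, since $M_\omega$ is already a von Neumann subalgebra of $M^\omega$ and the intersection of two $\sigma$-weakly closed $*$-subalgebras containing the unit is again one.

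First I would fix a faithful normal state $\phi$ on $M$, inducing $\phi^\omega$ on $M^\omega$, whose associated norm $\|\cdot\|^\sharp_{\phi^\omega}$ metrizes the strong-$*$ topology on bounded subsets of $M^\omega$. Let $(a^{(k)})_k$ be a sequence in the unit ball of $M^\omega_\alpha$ converging strong-$*$ to some $a \in (M^\omega)_1$; I want $a \in M^\omega_\alpha$. Lift each $a^{(k)}$ to a representing sequence $(x^{(k)}_n)_n \in \mathcal{E}^\omega_\alpha \cap \mathcal{N}_\omega(M)$ with $\sup_n \|x^{(k)}_n\| \le 1$, and lift $a$ to some $(y_n)_n \in \mathcal{N}_\omega(M)$ with $\sup_n\|y_n\| \le 1$. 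Passing to the equicontinuity estimate: for each $k$ and each $\e>0$ there is $W_k \in \omega$ and an open $1_G \in U_k \subset G$ with $\sup_{n \in W_k}\sup_{g \in U_k}\|\alpha_g(x^{(k)}_n) - x^{(k)}_n\|^\sharp_\phi < \e$. The key point is a standard reindexing/diagonal argument across the parameter $k$: choosing $k$ large enough that $\|a - a^{(k)}\|^\sharp_{\phi^\omega} < \e$ (so that $\lim_{n\to\omega}\|y_n - x^{(k)}_n\|^\sharp_\phi < \e$, after adjusting representatives) and then using part \ref{lem:equicont-algebra:1} of the previous lemma — which says $\mathcal{E}^\omega_\alpha$-membership is insensitive to perturbations in $\mathcal{I}_\omega(M)$ — together with the triangle-inequality bound
\[
\|\alpha_g(y_n) - y_n\|^\sharp_\phi \le \|\alpha_g(y_n - x^{(k)}_n)\|^\sharp_\phi + \|\alpha_g(x^{(k)}_n) - x^{(k)}_n\|^\sharp_\phi + \|x^{(k)}_n - y_n\|^\sharp_\phi,
\]
one controls the first and third terms by $\phi$-closeness on a set in $\omega$ (here one must pass $\phi$ through $\alpha_g$ over the relevant compact neighborhood, invoking Lemma~\ref{lemma:epsilon_delta_sequences} or the uniform-continuity remark to make the bound uniform in $g \in \overline{U_k}$) and the middle term by equicontinuity of $(x^{(k)}_n)_n$. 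This shows $(y_n)_n \in \mathcal{E}^\omega_\alpha$, hence $a \in M^\omega_\alpha$.

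The main obstacle I anticipate is the interplay between the two limits — the ultrafilter limit in $n$ and the strong-$*$ limit in $k$ — combined with the need to move the state $\phi$ through the action elements $\alpha_g$ uniformly over a compact neighborhood; this is precisely where one cannot be cavalier, and where Lemma~\ref{lemma:epsilon_delta_sequences} (absorbing $\mathcal{I}_\omega$-perturbations uniformly over compact sets of states) does the real work. A clean alternative, avoiding sequences of approximants altogether, is to argue that $\mathcal{E}^\omega_\alpha \cap \mathcal{N}_\omega(M)$ is already weakly closed in $\ell^\infty(M)$ in the appropriate sense: namely, observe that for each compact neighborhood $K$ of $1_G$ and each $\delta > 0$, the set $\{(x_n)_n : \sup_{g \in K}\|\alpha_g(x_n) - x_n\|^\sharp_\phi \le \delta \text{ for all } n \text{ in some } W \in \omega\}$ descends to a $\|\cdot\|^\sharp_{\phi^\omega}$-closed (hence $\sigma$-weakly closed, being bounded and convex) subset of $M^\omega$, and $M^\omega_\alpha$ is the intersection over $K$ and $\delta$ (with $\delta \to 0$) of such sets; a decreasing intersection of $\sigma$-weakly closed unital $*$-subalgebras is a von Neumann algebra. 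Either route reaches the conclusion; I would present the second as it is more conceptual and sidesteps the diagonal bookkeeping.
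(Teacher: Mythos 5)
Your first route is essentially the paper's own proof: take a sequence in the unit ball of $M^\omega_\alpha$ converging strongly to some $X\in (M^\omega)_1$, lift everything, and run the three-term triangle inequality in which the middle term is controlled by equicontinuity of the approximant and the outer two terms by closeness of representatives measured against the states $\phi\circ\alpha_g$ uniformly over a compact neighbourhood. The paper gets that uniformity not from Lemma~\ref{lemma:epsilon_delta_sequences} (which concerns multiplication by elements of $\mathcal{N}_\omega(M)$ and is not the right tool here) but from the continuity of $g\mapsto\phi^\omega\circ\alpha_g^\omega$ and compactness of $\{\phi^\omega\circ\alpha_g^\omega\}_{g\in K}$, i.e.\ your ``uniform-continuity remark''; with that reading, this route is correct and is the argument in the paper.

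The second route --- the one you say you would actually present --- has a genuine gap. Membership in $\mathcal{E}^\omega_\alpha$ asks that for every $\e>0$ there \emph{exist} a neighbourhood $U$ of $1_G$ (depending on $\e$, and typically shrinking as $\e\to 0$) on which the defining estimate holds. So if $S_{K,\delta}$ denotes your set for a fixed compact neighbourhood $K$ and tolerance $\delta$, the correct description is $M^\omega_\alpha=\bigcap_{\delta>0}\bigcup_{K}S_{K,\delta}$, with a union over neighbourhoods inside the intersection over $\delta$ --- not $\bigcap_{K,\delta}S_{K,\delta}$. The latter is strictly smaller in general: it forces $\lim_{n\to\omega}\sup_{g\in K}\|\alpha_g(x_n)-x_n\|^\sharp_\phi=0$ for \emph{every} compact $K$, which is an asymptotic fixed-point condition rather than equicontinuity. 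And while each $S_{K,\delta}$ is indeed bounded, convex and strong-$*$ closed, a union of closed sets over a directed family of shrinking neighbourhoods need not be closed, so the ``intersection of closed sets'' conclusion does not come for free. One can rescue the idea by showing $\overline{\bigcup_K S_{K,\delta}}\subseteq\bigcup_K S_{K,2\delta}$, but proving that containment is exactly the diagonal/triangle-inequality argument of the first route, so nothing is saved. (Also, these sets are convex subsets, not $*$-subalgebras; the algebra structure of $M^\omega_\alpha$ has to be quoted separately from the preceding lemma, as you do implicitly.) I would therefore present the first route, which is sound, and drop the second.
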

\begin{proof}
We show that $M_\alpha^\omega$ is a von Neumann algebra by showing that its unit ball is closed with respect to the strong operator topology in $M^\omega$. 
Then it automatically follows that $M_{\omega, \alpha} = M_\alpha^\omega \cap M_\omega$ is also a von Neumann algebra. 
Take a sequence $(X_k)_k \in (M_\alpha^\omega)_1$ that strongly converges to $X \in (M^\omega)_1$. Fix a faithful normal state $\phi$ on $M$ and a compact neighbourhood of the neutral element $K \subset G$. 
Then the function $K \rightarrow (M^\omega)_*$ given by $g \mapsto \phi^\omega \circ \alpha_g^\omega$ is continuous (because $\phi^\omega \circ \alpha_g^\omega = (\phi \circ \alpha_g)^\omega)$. 
Hence, the set $\{\phi^\omega \circ \alpha_g^\omega\}_{g \in K}$ is compact and thus $\lim_{n \rightarrow \infty} \sup_{g \in K} \|X_n - X\|^\sharp_{\phi^\omega \circ \alpha_g^\omega} = 0$.
Fix $\e >0$. 
Pick representing sequences $(x_k(n))_{n\in \N}$ and $(x(n))_{n\in \N}$ for the elements $X_k$ and $X$, respectively, such that $\|x_k(n)\| \leq 1$, $\|x(n)\| \leq 1$, for all $k,n \in \N$.
Then we can find $k_0 \in \N$  and $W_1 \in \omega$ such that 
\[
\sup_{n\in W_1} \sup_{g\in K} \|x_{k_0}(n) - x(n)\|_{\phi\circ \alpha_g}^\sharp < \frac{\e}{3}.
\]
Since $(x_{k_0}(n))_{n\in \N} \in \mathcal{E}_\alpha^\omega$, we can find an open neighborhood $1_G \in U \subset K$ and $W_2 \in \omega$ such that
\[
\sup_{n\in W_2} \sup_{g\in U} \|\alpha_g(x_{k_0}(n)) - x_{k_0}(n)\|_\phi^\sharp < \frac{\e}{3}.
\]
Then for all $g \in U$ and $n \in W_1 \cap W_2$ it holds that
\begin{align*}
\|\alpha_g(x(n)) - x(n)\|_\phi^\sharp & \leq \|x(n) - x_{k_0}(n)\|_{\phi \circ \alpha_g}^\sharp + \|\alpha_g(x_{k_0}(n)) - x_{k_0}(n)\|_\phi^\sharp + \|x_{k_0}(n) - x(n)\|_\phi^\sharp\\
&< \frac{\e}{3}+\frac{\e}{3}+\frac{\e}{3} = \e. 
\end{align*} 
This shows that $(x(n))_{n \in \N} \in \mathcal{E}_\alpha^\omega$, or in other words $X \in M_\alpha^\omega$.
\end{proof}

\begin{lemma}
Let $M$ be a $\sigma$-finite von Neumann algebra with an action $\alpha\colon G \acts M$ of a second-countable locally compact group. 
Then $\alpha^\omega$ restricted to $M_\alpha^\omega$ and $\alpha_\omega$ restricted to $M_{\omega,\alpha}$ are continuous $G$-actions.
\end{lemma}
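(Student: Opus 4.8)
The plan is to reduce the whole statement to point-$*$-strong continuity of $\alpha^\omega$ on $M_\alpha^\omega$, which by \cite[Proposition~X.1.2]{Takesaki03} — applied to the von Neumann algebra $M_\alpha^\omega$ and the algebraic homomorphism $G\to\mathrm{Aut}(M_\alpha^\omega)$ — is equivalent to $\alpha^\omega|_{M_\alpha^\omega}$ being a continuous action in the sense of the definition above. First I would record that $\alpha^\omega$ really does restrict to a homomorphism into $\mathrm{Aut}(M_\alpha^\omega)$: the ideal $\mathcal{I}_\omega(M)$ and the algebra $\mathcal{N}_\omega(M)$ are $\alpha$-invariant (the former because each $\alpha_g$ is a normal $*$-automorphism, the latter because it is the idealizer of the former), and the preceding lemma shows $\mathcal{E}_\alpha^\omega$ is $\alpha$-invariant, so $M_\alpha^\omega=(\mathcal{E}_\alpha^\omega\cap\mathcal{N}_\omega(M))/\mathcal{I}_\omega(M)$ is $\alpha^\omega$-invariant.

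Next I would fix a faithful normal state $\phi$ on $M$; then $\phi^\omega$ is a faithful normal state on $M^\omega$, and its restriction $\psi:=\phi^\omega|_{M_\alpha^\omega}$ is again a faithful normal state, so $\|\cdot\|_\psi^\sharp$ induces the $*$-strong topology on bounded subsets of $M_\alpha^\omega$. For $X\in(M_\alpha^\omega)_1$ with representing sequence $(x_n)_{n}\in\mathcal{E}_\alpha^\omega\cap\mathcal{N}_\omega(M)$ chosen with $\|x_n\|\le 1$ for all $n$, the defining formula for $\phi^\omega$ gives, for every $g\in G$,
\[
\|\alpha_g^\omega(X)-X\|_\psi^\sharp=\lim_{n\to\omega}\|\alpha_g(x_n)-x_n\|_\phi^\sharp .
\]
Given $\e>0$, the $(\alpha,\omega)$-equicontinuity of $(x_n)_n$ supplies $W\in\omega$ and an open neighbourhood $1_G\in U\subset G$ with $\sup_{n\in W}\sup_{g\in U}\|\alpha_g(x_n)-x_n\|_\phi^\sharp<\e$; since $W\in\omega$, the displayed identity yields $\sup_{g\in U}\|\alpha_g^\omega(X)-X\|_\psi^\sharp\le\e$. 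Hence $g\mapsto\alpha_g^\omega(X)$ is $*$-strongly continuous at $1_G$, uniformly over $X$ in the unit ball of $M_\alpha^\omega$.

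To upgrade continuity at $1_G$ to continuity at an arbitrary $g_0\in G$ I would use the homomorphism identity $\alpha_g^\omega(X)-\alpha_{g_0}^\omega(X)=\alpha_{g_0}^\omega\big(\alpha_{g_0^{-1}g}^\omega(X)-X\big)$ together with the fact that the normal $*$-automorphism $\alpha_{g_0}^\omega$ is $*$-strongly continuous on bounded sets; as $g\to g_0$ the argument $\alpha_{g_0^{-1}g}^\omega(X)-X$ tends $*$-strongly to $0$ by the previous step, hence so does the whole expression. This establishes point-$*$-strong continuity of $\alpha^\omega|_{M_\alpha^\omega}$, hence a continuous action. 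For the last clause, $M_{\omega,\alpha}=M_\alpha^\omega\cap M_\omega$ is $\alpha^\omega$-invariant because $\mathcal{C}_\omega(M)$ is $\alpha$-invariant (a one-line computation: $\|[\alpha_g(x_n),\varphi]\|=\|[x_n,\varphi\circ\alpha_g]\|$ for all $\varphi\in M_*$, using that $\alpha_{g^{-1}}$ is a surjective isometry of $M$), and since the $*$-strong topology on $M_{\omega,\alpha}$ is the one inherited from $M_\alpha^\omega$, restricting the continuous action $\alpha^\omega|_{M_\alpha^\omega}$ to this invariant von Neumann subalgebra yields the continuous action $\alpha_\omega|_{M_{\omega,\alpha}}$.

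The one step that deserves care is the identity $\|\alpha_g^\omega(X)-X\|_\psi^\sharp=\lim_{n\to\omega}\|\alpha_g(x_n)-x_n\|_\phi^\sharp$, together with the observation that $(\alpha,\omega)$-equicontinuity is exactly the hypothesis that lets one bound the right-hand side uniformly for $g$ near $1_G$; everything else is bookkeeping about invariance of the sequence spaces $\mathcal{I}_\omega(M)$, $\mathcal{C}_\omega(M)$, $\mathcal{N}_\omega(M)$, $\mathcal{E}_\alpha^\omega$ and a routine translation-to-the-identity argument, so I do not anticipate a genuine obstacle.
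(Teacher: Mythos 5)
Your proof is correct and follows essentially the same route as the paper: both arguments reduce to the single key fact that $(\alpha,\omega)$-equicontinuity of a representing sequence forces $\|\alpha_g^\omega(a)-a\|_{\phi^\omega}^\sharp\to 0$ as $g\to 1_G$ for $a\in M_\alpha^\omega$. The only cosmetic difference is that the paper verifies the predual-norm definition directly on the dense set of functionals $\{a\phi^\omega \mid a\in M_\alpha^\omega\}$, whereas you verify point-$*$-strong continuity and invoke the equivalence from \cite[Proposition~X.1.2]{Takesaki03}; both are legitimate and the bookkeeping (invariance of $\mathcal{I}_\omega(M)$, $\mathcal{N}_\omega(M)$, $\mathcal{C}_\omega(M)$, $\mathcal{E}_\alpha^\omega$, translation from $1_G$ to general $g_0$) is handled correctly.
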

\begin{proof}
Fix a faithful normal state $\phi$ on $M$. Since $\phi^\omega$ is faithful, $\{a\phi^\omega \mid a \in M_\alpha^\omega\}$ is dense in $(M_\alpha^\omega)_*$. For $a \in M_\alpha^\omega$ and $g \in G$ one has
\begin{align*}
\|(a\phi^\omega)  \circ \alpha^\omega_g - a \phi^\omega \|_{(M_\alpha^\omega)_*} & \leq \|\alpha_{g^{-1}}^\omega(a)(\phi^\omega \circ \alpha_g^\omega - \phi^\omega)\|_{(M_\alpha^\omega)_*} \|+ \|(\alpha_{g^{-1}}^\omega(a) - a)\phi^\omega\|_{(M_\alpha^\omega)_*}\\
& \leq \|a\| \, \|\phi \circ \alpha_g - \phi\|_{M_*} + \|\alpha_{g^{-1}}^\omega(a) - a\|_{\phi^\omega}.
\end{align*}
When $g \rightarrow 1_G$, the first summand converges to zero because $\alpha$ is a continuous $G$-action on $M$ and the second summand converges to zero by definition as $a \in M_\alpha^\omega$. 
This shows that $\alpha^\omega$ restricts to a genuine continuous $G$-action on $M_\alpha^\omega$, so the same is true for the restriction of $\alpha_\omega$ to $M_{\omega, \alpha}$. 
\end{proof}

\begin{lemma} \label{lemma:lifting_invariance_compact_sets}
Let $M$ be a von Neumann algebra with a faithful normal state $\phi$ and an action $\alpha\colon G \acts M$. 
Let $z \in M_\alpha^\omega$, $\e >0$, $K \subset G$ a compact set and suppose that $\| \alpha_g^\omega(z)-z\|_{\phi^\omega}^\sharp \leq \e$ for all $g \in K$.
If $(z_n)_{n\in \N}$ is any bounded sequence representing $z$, then 
\[
\lim_{n \rightarrow \omega} \max_{g \in K} \| \alpha_g(z_n)-z_n\|_\phi^\sharp \leq \e.
\]
\end{lemma}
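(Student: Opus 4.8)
The plan is to argue directly: for each $n$ we pick a parameter $g_n\in K$ at which the quantity $\|\alpha_g(z_n)-z_n\|_\phi^\sharp$ is maximal, pass to an ultrafilter limit $g_*$ of the $g_n$ inside the compact set $K$, and use $(\alpha,\omega)$-equicontinuity to see that $\alpha_{g_n}(z_n)$ and $\alpha_{g_*}(z_n)$ become indistinguishable along $\omega$. Applying the hypothesis at the single point $g_*\in K$ then closes the argument.

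First I would record the routine preliminaries. Since $(z_n)_{n\in\N}$ is bounded, every $\omega$-limit appearing below exists; and since this sequence represents $z\in M_\alpha^\omega$, it differs from some sequence in $\mathcal{E}_\alpha^\omega\cap\mathcal{N}_\omega(M)$ by an element of $\mathcal{I}_\omega(M)$, hence $(z_n)_{n\in\N}\in\mathcal{E}_\alpha^\omega\cap\mathcal{N}_\omega(M)$ by part~\ref{lem:equicont-algebra:1} of the lemma preceding Definition~\ref{def:equicontinuous_parts}. Because $\alpha_h^\omega$ acts on representatives by $[(w_n)_n]\mapsto[(\alpha_h(w_n))_n]$ and $\phi^\omega$ is evaluation-then-$\lim_{n\to\omega}$, one has for every $h\in G$
\[
\|\alpha_h^\omega(z)-z\|_{\phi^\omega}^\sharp=\lim_{n\to\omega}\|\alpha_h(z_n)-z_n\|_\phi^\sharp ,
\]
which in particular is $\leq\e$ whenever $h\in K$. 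Finally, $g\mapsto\|\alpha_g(z_n)-z_n\|_\phi^\sharp$ is continuous on $G$ (continuity of $\alpha$ in the point--strong-$*$ topology on bounded sets), so by compactness of $K$ we may choose, for each $n$, an element $g_n\in K$ realising $\max_{g\in K}\|\alpha_g(z_n)-z_n\|_\phi^\sharp$.

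Now set $g_*:=\lim_{n\to\omega}g_n\in K$, which exists since $K$ is compact. Since $\mathcal{E}_\alpha^\omega$ is $\alpha$-invariant (part~\ref{lem:equicont-algebra:2} of the same lemma), the sequence $(\alpha_{g_*}(z_n))_{n\in\N}$ again lies in $\mathcal{E}_\alpha^\omega$, so for every $\eta>0$ there is an open neighbourhood $1_G\in V\subset G$ and $W\in\omega$ with $\sup_{n\in W}\sup_{g\in V}\|\alpha_g(\alpha_{g_*}(z_n))-\alpha_{g_*}(z_n)\|_\phi^\sharp<\eta$. As $\{n:g_ng_*^{-1}\in V\}\in\omega$, applying this with $g=g_ng_*^{-1}$ gives $\|\alpha_{g_n}(z_n)-\alpha_{g_*}(z_n)\|_\phi^\sharp<\eta$ on a set in $\omega$; since $\eta>0$ was arbitrary, $\lim_{n\to\omega}\|\alpha_{g_n}(z_n)-\alpha_{g_*}(z_n)\|_\phi^\sharp=0$. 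Combining this with the triangle inequality,
\[
\max_{g\in K}\|\alpha_g(z_n)-z_n\|_\phi^\sharp=\|\alpha_{g_n}(z_n)-z_n\|_\phi^\sharp\le\|\alpha_{g_*}(z_n)-z_n\|_\phi^\sharp+\|\alpha_{g_n}(z_n)-\alpha_{g_*}(z_n)\|_\phi^\sharp ,
\]
and taking $\lim_{n\to\omega}$ (all sequences are bounded, so the limit is additive and monotone) yields
\[
\lim_{n\to\omega}\max_{g\in K}\|\alpha_g(z_n)-z_n\|_\phi^\sharp\le\lim_{n\to\omega}\|\alpha_{g_*}(z_n)-z_n\|_\phi^\sharp=\|\alpha_{g_*}^\omega(z)-z\|_{\phi^\omega}^\sharp\le\e ,
\]
the last step using $g_*\in K$.

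The main point, and essentially the only subtlety, is conceptual: the hypothesis is a statement ``for each fixed $g$'' whereas the conclusion demands control of the supremum over all of $K$ inside the $\omega$-limit. Equicontinuity of $(z_n)_{n\in\N}$ --- and, crucially, of its translate $(\alpha_{g_*}(z_n))_{n\in\N}$ --- is precisely what allows the worst-offending parameters $g_n$ to be replaced, asymptotically along $\omega$, by their single limit $g_*$, to which the hypothesis applies. One should only note that no circularity arises: $g_*$ is determined by the fixed sequence $(g_n)_{n\in\N}$, so it is legitimate to choose the equicontinuity neighbourhood $V$ afterwards.
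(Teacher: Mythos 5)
Your argument is correct. It differs from the paper's proof only in how compactness of $K$ is exploited. The paper fixes $\delta>0$ and produces, for each $g\in K$, a neighbourhood $U\ni g$ and a set $W_g\in\omega$ with $\sup_{n\in W_g}\sup_{h\in U}\|\alpha_h(z_n)-\alpha_g(z_n)\|_\phi^\sharp<\delta$ (the same equicontinuity/$\alpha$-invariance input you use), extracts a finite subcover $U_1,\dots,U_N$ with centres $g_1,\dots,g_N$, applies the hypothesis at these finitely many points to get $\|\alpha_{g_j}(z_n)-z_n\|_\phi^\sharp\le\e+\delta$ on a set in $\omega$, and concludes by the triangle inequality and $\delta\to0$. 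You instead choose a maximizing parameter $g_n\in K$ for each $n$, pass to the ultralimit $g_*=\lim_{n\to\omega}g_n$, and apply the hypothesis at the single point $g_*$. Both routes rest on exactly the same two ingredients --- equicontinuity of the representing sequence and compactness of $K$ --- so the difference is packaging: your version leans harder on the ultrafilter (an ultralimit of parameters, available since $G$ is second countable so $K$ is compact metrizable) and needs the maximum to be attained, which you correctly justify via point-strong-$*$ continuity of $g\mapsto\alpha_g(z_n)$ on bounded sets; the paper's finite-net version avoids selecting maximizers and would work verbatim with suprema. Your observation that $(\alpha_{g_*}(z_n))_n$ is again equicontinuous, via the $\alpha$-invariance of $\mathcal{E}_\alpha^\omega$, is the step that substitutes for the paper's translated neighbourhoods $U\ni g$; neither approach buys anything substantial over the other.
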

\begin{proof}
Let $\delta>0$.
Then for each $g \in K$ there exists an open neighborhood $g \in U \subset G$ and $W_g \in \omega$ such that
\[\sup_{n \in W_g}\sup_{h \in U}\|\alpha_h(z_n) - \alpha_g(z_n)\|_\phi^\sharp < \delta.\]
Since this obviously yields an open cover of $K$ and $K$ is compact, we can find finitely many elements $g_1, \hdots, g_N \in K$ and an open covering $K \subset \cup_{i=1}^N U_j$ with $g_j \in U_j$ and some $W_1 \in \omega$ such that for $j=1, \hdots, N$ we have
\[\sup_{n \in W_1}\sup_{g \in U_j}\|\alpha_{g}(z_n) - \alpha_{g_j}(z_n)\|_\phi^\sharp < \delta.\]
Since $\max_{g \in K}\| \alpha_g^\omega(z)-z\|_{\phi^\omega}^\sharp \leq \e$, there exists $W_2 \in \omega$ such that for all $n \in W_2$ and $j=1, \hdots, N$ 
\[\|\alpha_{g_j}(z_n) - z_n\|_\phi^\sharp \leq \e+\delta.\]
Hence, for an arbitrary $g \in K$, there is some $j \in \{1, \hdots, N\}$ such that $g \in U_j$ and
\[\| \alpha_g(z_n)-z_n\|_\phi^\sharp \leq \|\alpha_g(z_n) - \alpha_{g_j}(z_n)\|_\phi^\sharp + \|\alpha_{g_j}(z_n) - z_n\|_\phi^\sharp \leq 2\delta+ \e\]
for all $n \in W_1 \cap W_2.$
Since $\delta$ was arbitrary, this proves the claim. 
\end{proof}

\subsection{Cocycle morphisms}

\begin{definition}[cf.\ {\cite[Definition 1.10]{Szabo21cc}}]
Let $\alpha\colon G \acts M$ and $\beta\colon G \acts N$ be two actions of a second-countable locally compact group on von Neumann algebras. 
A (unital) \emph{cocycle morphism} from $(M,\alpha)$ to $(N,\beta)$ is a pair $(\phi,\mathbbm{u})$, where $\phi\colon M \rightarrow N$ is a unital normal $*$-homomorphism and $\mathbbm{u}:G\rightarrow \mathcal{U}(N)$ is a continuous map (in the strong operator topology) such that for all $g,h \in G$ we have
\[
\mathrm{Ad}(\mathbbm{u}_g) \circ \beta_g \circ \phi = \phi \circ \alpha_g \quad\text{and}\quad \mathbbm{u}_g \beta_g(\mathbbm{u}_h) = \mathbbm{u}_{gh}.
\]
If $\mathbbm{u}$ is the trivial map, we identify $\phi$ with $(\phi,1)$ and call $\phi$ equivariant.
\end{definition}

\begin{remark} \label{rem:cocycle-category}
As the arguments in \cite[Subsection 1.3]{Szabo21cc} show, the above endows the class of continuous $G$-actions on von Neumann algebras with a categorical structure, whereby the Hom-sets are given by cocycle morphisms.
The composition is given via
\[
(\psi,\mathbbm{v}) \circ (\phi,\mathbbm{u}) := (\psi \circ \phi, \psi(\mathbbm{u}) \mathbbm{v})
\]
for any pair of cocycle morphisms
\[
(M, \alpha) \overset{(\phi,\mathbbm{u})}{\longrightarrow} (N, \beta) \quad \text{ and } \quad (N, \beta)  \overset{(\psi,\mathbbm{v})}{\longrightarrow} (L, \gamma).
\]
We see furthermore that a cocycle morphism $(\phi, \mathbbm{u})\colon(M, \alpha) \rightarrow (N, \beta)$ is invertible if and only if $\phi$ is a $*$-isomorphism of von Neumann algebras, in which case we have $(\phi, \mathbbm{u})^{-1} = (\phi^{-1}, \phi^{-1}(\mathbbm{u})^*)$.
If this holds, we call $(\phi, \mathbbm{u})$ a \emph{cocycle conjugacy}.
We call two actions $\alpha$ and $\beta$ \emph{cocycle conjugate}, denoted $\alpha \simeq_{\mathrm{cc}} \beta$, if there exists a cocycle conjugacy between them.
\end{remark}

\begin{example} \label{ex:inner-cc}
Let $\alpha\colon G \acts M$ be an action.
Then every unitary $v \in \mathcal{U}(M)$ gives rise to a cocycle conjugacy 
\[
\big( \mathrm{Ad}(v), (v \alpha_g(v)^*)_{g \in G} \big) \colon (M, \alpha) \rightarrow (M, \alpha).\]
We will also write this simply as $\mathrm{Ad}(v)$ when it is clear from context that we are talking about cocycle morphisms. 
When $\beta\colon G \acts N$ is another action and ${(\phi, \mathbbm{u})\colon (M, \alpha) \rightarrow (N, \beta)}$ is a cocycle conjugacy, then
\[ (\phi,\mathbbm{u}) \circ \mathrm{Ad}(v) = \mathrm{Ad}(\phi(v)) \circ (\phi, \mathbbm{u}).\] 
\end{example}

\begin{definition} \label{def:approx-ue}
Let $\alpha:G \acts M$ and $\beta\colon G \acts N$ be two actions on finite von Neumann algebras $M$ and $N$.
Let $\tau_N$ be a faithful normal tracial state on $N$.
Let $(\phi,\mathbbm{u})$ and $(\psi,\mathbbm{v})$ be two cocycle morphisms from $(M,\alpha)$ to $(N,\beta)$. We say that $(\phi,\mathbbm{u})$ and $(\psi,\mathbbm{v})$ are \emph{approximately unitarily equivalent} if there exists a net of unitaries $w_\lambda \in \mathcal{U}(N)$ such that $\|w_\lambda\phi(x)w_\lambda^*-\psi(x)\|_{\tau_N} \to 0$ for all $x\in M$ and $\max_{g\in K} \| w_\lambda \mathbbm{u}_g \beta_g(w_\lambda)^* - \mathbbm{v}_g\|_{\tau_N} \rightarrow 0$ for every compact set $K\subseteq G$.
We denote the relation of approximate unitary equivalence by $\approx_{\mathrm{u}}$.
\end{definition}

\section{One-sided intertwining}
In this section we prove a version of \cite[Lemma~2.1]{Szabo18ssa} (which goes back to \cite[Proposition 2.3.5]{Rordam}) for group actions on semi-finite von Neumann algebras. First we prove the following intermediate lemma:

\begin{lemma}\label{lemma:point_strong_dense_subset}
Let $M, N$ be von Neumann algebras, and let $\tau_N$ be  a faithful, normal, semi-finite trace on $N$. 
Consider a sequence of $*$-homomorphisms $(\theta_n\colon M \rightarrow N)_{n \in \N}$ and a $*$-isomorphism $\theta:M \rightarrow N$ such that $\tau_N \circ \theta = \tau_N \circ \theta_n$ for all $n \in \N$. 
Let $X \subset (M)_1$ be a dense subset in the strong operator topology that contains a sequence of projections $(p_n)_{n \in \N}$ converging strongly to $1_M$ with $\tau_N (\theta(p_n)) < \infty$. 
If $\theta_n (x) \rightarrow \theta(x)$ strongly as $n \rightarrow \infty$ for every $x \in X$, then $\theta_n \rightarrow \theta$ in the point-strong topology as $n \rightarrow \infty$.
\end{lemma}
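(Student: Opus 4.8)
The goal is to show that for an arbitrary $a \in (M)_1$ and an arbitrary vector in the Hilbert space on which $N$ acts, $\theta_n(a)$ converges to $\theta(a)$ strongly. The natural strategy is a triangle-inequality/density argument, but the subtlety is that the strong topology is not metrizable on all of $N$, only on bounded sets, and moreover the trace $\tau_N$ is only semi-finite, so $\|\cdot\|_{\tau_N}$ is only a seminorm on a weakly dense ideal rather than a norm inducing the strong topology on bounded sets. This is exactly what the projections $p_n$ are there to fix. First I would fix $a \in (M)_1$ and a normal state (or, working through the GNS picture, a vector state) $\psi$ on $N$ that controls the relevant seminorm; it suffices to show $\|\theta_n(a) - \theta(a)\|_\psi \to 0$ for a sufficiently rich family of such $\psi$, and by the usual $2\times 2$ matrix trick one may as well control the strong-$*$ seminorm.

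The heart of the argument is a three-term estimate. Given $\varepsilon > 0$, first choose a projection $p = p_k$ from the given sequence with $\tau_N(\theta(p)) < \infty$ and with $\|1_M - p\|$ small in the relevant sense — concretely, $\theta(p)$ close to $1_N$ strongly, hence (since $\tau_N \circ \theta_n = \tau_N \circ \theta$ and these are $*$-homomorphisms) also $\|1_N - \theta_n(p)\|_{\tau_N}^2 = \tau_N(\theta_n(1_M - p)) = \tau_N(\theta(1_M-p))$ uniformly small in $n$. Then $pap \in (M)_1$ and, since $\theta(p)$ has finite trace, $\|\theta(pap) - \theta_n(pap)\|_{\tau_N}$ is a genuine norm quantity that I can hope to make small. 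The point is to approximate $pap$ strongly by elements of $X$: since $X$ is strongly dense in $(M)_1$ and multiplication by the fixed elements $p$ on both sides is strongly continuous on bounded sets, $pXp$ is strongly dense in $p(M)_1 p$ in an appropriate sense — more carefully, I pick $x \in X$ with $pap$ approximated by $pxp$, or directly use that $X$ is dense to find $x \in X$ with $x \approx pap$ and then cut down. For such $x$, $\theta_n(x) \to \theta(x)$ by hypothesis; combining with the fact that $\theta$ and all $\theta_n$ are $\tau_N$-preserving (so they are isometric for $\|\cdot\|_{\tau_N}$ on the finite part, and in particular contractive on bounded sets for the strong topology in a uniform way) gives $\theta_n(pap) \to \theta(pap)$ strongly.

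Finally I would assemble the estimate
\[
\|\theta_n(a) - \theta(a)\|_\psi \leq \|\theta_n(a - pap)\|_\psi + \|\theta_n(pap) - \theta(pap)\|_\psi + \|\theta(pap - a)\|_\psi,
\]
where the outer two terms are controlled uniformly in $n$ by the choice of $p$ — here one writes $a - pap = (1-p)a + pa(1-p)$ and bounds each piece using $\|(1-p)\xi\|$-type quantities together with the trace estimate $\tau_N(\theta_n(1-p)) = \tau_N(\theta(1-p))$, which is where the uniformity in $n$ genuinely comes from — and the middle term tends to $0$ by the previous paragraph. Since $\varepsilon$ was arbitrary, this gives $\theta_n(a) \to \theta(a)$ strongly, and since $a \in (M)_1$ was arbitrary and any element of $M$ is a scalar multiple of one in the unit ball, we get convergence in the point-strong topology.

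The main obstacle I anticipate is bookkeeping around the semi-finiteness of $\tau_N$: one must be careful that every time the norm $\|\cdot\|_{\tau_N}$ is invoked it is applied to an element of the finite part (which is why cutting down by $\theta(p)$ with $\tau_N(\theta(p)) < \infty$ before doing any estimate is essential), and that the passage between "$\tau_N$-small" and "strongly small" is only legitimate on bounded sets. A secondary point requiring care is that the strong topology is not metrizable, so the argument should be phrased either in terms of a fixed but arbitrary seminorm $\|\cdot\|_\psi$ coming from a normal state, or via nets; the estimate above is robust to either formulation.
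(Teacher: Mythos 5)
Your overall architecture (cut down by a trace-finite projection, approximate the cut-down element from $X$, and use $\tau_N\circ\theta_n=\tau_N\circ\theta$ for the step that must be uniform in $n$) is the right idea, and your middle and last terms can be made to work. The gap is in the first term of your three-term estimate, and it comes from cutting down on the \emph{left}. Writing $a-pap=(1-p)a+pa(1-p)$, the piece $pa(1-p)$ is harmless, because the factor $1-p$ ends up acting directly on the fixed test vector and one gets a $\|(1-q)\xi\|$-type bound in the limit over $n$. But for the piece $(1-p)a$ one has $\|\theta_n((1-p)a)\|_\psi^2=\psi\big(\theta_n(a)^*\theta_n(1_M-p)\theta_n(a)\big)$, and here the projection sits on the wrong side of $\theta_n(a)$: since $a^*(1-p)a$ is \emph{not} dominated by any multiple of $1-p$ (take $a$ a unitary moving $p$), no $\|(1-q)\xi\|$-type quantity can be extracted, and since nothing is yet known about the vectors $\theta_n(a)\xi$ (that is precisely the conclusion being proved), there is no way to bound this term uniformly in $n$. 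The repair you propose, namely the identity $\tau_N(\theta_n(1_M-p))=\tau_N(\theta(1_M-p))$, which you correctly single out as the source of uniformity in $n$, is vacuous in exactly the case the lemma is designed for: when $\tau_N$ is semi-finite but not finite, $\tau_N(\theta(1_M-p))=\tau_N(1_N)-\tau_N(\theta(p))=\infty$, so the identity reads $\infty=\infty$ and yields no control. (If $\tau_N$ were finite your argument would go through with $\psi=\tau_N$, since then $\|\theta_n(1_M-p)\|_{\tau_N}^2=\tau_N(\theta(1_M-p))$ is genuinely small; but the lemma is applied in the paper to type II$_\infty$ algebras.)

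The fix is to never place a projection to the left of $a$. Since $\theta(p_k)\to 1_N$ strongly and the operators $\theta(a)-\theta_n(a)$ are uniformly bounded, the estimate $\|(\theta(a)-\theta_n(a))\xi\|\le\|(\theta(a)-\theta_n(a))\theta(p_k)\xi\|+2\|(1_N-\theta(p_k))\xi\|$ reduces the problem to showing $(\theta(a)-\theta_n(a))\theta(p_k)\to 0$ strongly for each fixed $k$. This one tests against the dense set of vectors coming from $b\in N$ with $\tau_N(b^*b)<\infty$ in $L^2(N,\tau_N)$, and telescopes through an $x\in X$ chosen so that $\|\theta(x-a)\theta(p_k)\|_{\tau_N}$ is small --- which is achievable from mere strong density of $X$, because this quantity is the norm of the $L^2$-vector obtained by applying $\theta(x-a)$ to the square-integrable element $\theta(p_k)$. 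Trace preservation then gives $\|\theta_n((x-a)p_k)\|_{\tau_N}=\|\theta((x-a)p_k)\|_{\tau_N}$, which is the uniform-in-$n$ bound you were after, with the finiteness supplied by the projection sitting on the \emph{right}. The remaining terms converge for fixed $x$ and $p_k$ by the hypothesis on $X$ together with joint strong continuity of multiplication on bounded sets. This is essentially the proof in the paper.
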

\begin{proof}
Take $y \in (M)_1$. Since the sequence $(\theta(p_n))_{n \in \N}$ converges strongly to $1_N$, it suffices to show that for all $k \in \N$
\[ (\theta(y)-\theta_n(y)) \theta(p_k) \rightarrow 0 \text{ strongly as } n \rightarrow \infty.\]
Fix $k \in \N$ and $a \in N$ such that $\tau_N(a^*a) < \infty$. Given $\e>0$, there exists $x \in X$ such that 
\[\|\theta(x-y)\theta(p_k)\|_{\tau_N} < \frac{\e}{4\|a\|}.\]
 Then there exists $n_0 \in \N$ such that for all $n \geq n_0$ 
 \[\|(\theta(x p_k) - \theta_n ( x p_k))a\|_{\tau_N} < \frac{\e}{4} \text{ and } \|(\theta(p_k)-\theta_n(p_k))a\|_{\tau_N} < \frac{\e}{4}.\] 
For all $n \geq n_0$ we then get that
\begin{align*}
\|(\theta(y) - \theta_n(y))\theta(p_k)a\|_{\tau_N} &\leq \|\theta(x-y)\theta(p_k)a\|_{\tau_N} + \|\theta_n(x-y)\theta_n(p_k)a\|_{\tau_N}\\
&\quad + \|(\theta(xp_k) - \theta_n(xp_k))a\|_{\tau_N} + \|\theta_n(y)(\theta(p_k) - \theta_n(p_k))a\|_{\tau_N}\\
&< 2\|a\| \|\theta(x-y)\theta(p_k)\|_{\tau_N} +\e/4 + \|(\theta(p_k)-\theta_n(p_k))a\|_{\tau_N} \\
&< \e.
\end{align*}
As $k$ and $a$ were arbitrary, this proves the claim.
\end{proof}

\begin{lemma} \label{lemma:one-sided_intertwining} 
Let $G$ be a second-countable locally compact group.
Let $M$ and $N$ be two von Neumann algebras with separable predual and faithful normal semi-finite traces $\tau_{M}$ and $\tau_{N}$, respectively. 
Let $\alpha\colon G \acts M$ and $\beta\colon G \acts N$ be two actions.
Let $\rho\colon (M, \alpha) \rightarrow (N, \beta)$ be a unital equivariant normal $*$-homomorphism with $\tau_N \circ \rho = \tau_M$.
Suppose there exists a faithful normal state $\phi$ on $N$ and a sequence of unitaries $(w_n)_{n \in \N}$ in $\mathcal{U}(N)$ satisfying
\begin{enumerate}[leftmargin=*,label=$\bullet$]
\item $\mathrm{Ad}(w_n) \circ \rho \to \rho$ in the point-strong topology; 
\item For all $y \in (N)_1$ there exists a sequence $(x_n)_{n \in \N} \subset (M)_1$ such that $y -  w_n\rho(x_n)w_n^* \rightarrow 0$ in the strong operator topology;
\item $\max_{g \in K} \|\beta_g(w_n^*) - w_n^*\|_\phi \rightarrow 0$ for every compact subset $K \subseteq G$.
\end{enumerate}
Then $\rho(\mathcal{Z}(M))=\mathcal{Z}(N)$ and there exists a cocycle conjugacy $(\theta,\mathbbm{v})$ between $\alpha$ and $\beta$ with $\theta|_{\mathcal{Z}(M)}=\rho|_{\mathcal{Z}(M)}$.
In case $\tau_N$ is finite, the existence of such a sequence of unitaries for $\phi = \tau_N$ is equivalent to the condition that $\rho$ is approximately unitarily equivalent to a cocycle conjugacy.
\end{lemma}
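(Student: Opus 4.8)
The plan is to run a one-sided intertwining argument in the spirit of \cite[Lemma~2.1]{Szabo18ssa} and \cite[Proposition~2.3.5]{Rordam}, adapted to the equivariant, semi-finite setting. Fix once and for all a countable strongly dense subset $X\subseteq(M)_1$ containing an increasing sequence of projections $p_k\nearrow 1_M$ with $\tau_M(p_k)<\infty$ (available since $\tau_M$ is semi-finite), a countable strongly dense subset $Y\subseteq(N)_1$, a countable $\|\cdot\|_{\tau_N}$-dense subset $D$ of $\{c\in N\mid\tau_N(c^*c)<\infty\}$, an increasing exhaustion $K_1\subseteq K_2\subseteq\cdots$ of $G$ by compact sets, and a countable dense subset $\Xi$ of the GNS space $L^2(N,\tau_N)$. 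I would then build unitaries $v_n=w_{m(0)}w_{m(1)}\cdots w_{m(n-1)}\in\mathcal U(N)$ (with $v_0=1$), choosing indices $m(0)<m(1)<\cdots$ inductively so large that, writing $\theta_n:=\mathrm{Ad}(v_n)\circ\rho$: (a) $\|(\theta_{n+1}(x)-\theta_n(x))c\|_{\tau_N}<2^{-n}$ for $c$ among the first $n$ elements of $D$ and $x$ among the first $n$ elements of $X$ as well as the (finitely many) auxiliary elements $x_{l,y}$ produced at earlier steps; (b) for each $y$ among the first $n$ elements of $Y$ there is some $x_{n,y}\in(M)_1$ with $\|(\theta_{n+1}(x_{n,y})-y)c\|_{\tau_N}<2^{-n}$ for such $c$; (c) $\sup_{g\in K_n}\|(v_{n+1}\beta_g(v_{n+1})^*-v_n\beta_g(v_n)^*)\xi\|<2^{-n}$ for $\xi$ among the first $n$ elements of $\Xi$. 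Here (a) is arranged using the first bullet of the hypothesis, because $\theta_{n+1}=\mathrm{Ad}(v_n)\circ\mathrm{Ad}(w_{m(n)})\circ\rho$ and $\mathrm{Ad}(v_n)$ is strongly continuous on bounded sets; (b) by the second bullet applied to $v_n^*yv_n\in(N)_1$ and then conjugating back by $v_n$; and (c) by the third bullet, since
\[
v_{n+1}\beta_g(v_{n+1})^*-v_n\beta_g(v_n)^*=v_nw_{m(n)}\bigl(\beta_g(w_{m(n)}^*)-w_{m(n)}^*\bigr)\beta_g(v_n^*),
\]
whose action on $\xi$ has norm $\|(\beta_g(w_{m(n)}^*)-w_{m(n)}^*)\eta\|$ with $\eta=\beta_g(v_n^*)\xi$ ranging over a norm-compact subset of $L^2(N,\tau_N)$ as $g$ ranges over $K_n$ (by strong continuity of $g\mapsto\beta_g(v_n^*)$), and because strong convergence of the uniformly bounded family $g\mapsto\beta_g(w_m^*)-w_m^*$ to $0$ as $m\to\infty$ is detected by every normal functional, hence is uniform over that compact set of test vectors.

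Granting the construction, I would show that $\theta:=\text{point-strong-}\lim_n\theta_n$ exists and is a unital normal $*$-isomorphism $M\to N$. Injectivity is automatic from $\tau_N\circ\theta=\tau_M$ being faithful; surjectivity follows from (b), which — together with (a) applied to the $x_{l,y}$ at all later steps and a diagonal choice $y_{j(n)}\to y$, $j(n)\le n$ — places $Y$, and hence $N$, in the strong closure of $\theta(M)$; and, crucially, the fact that the point-strong limit exists \emph{as a normal $*$-homomorphism on all of $M$}, not merely on the dense set $X$, is obtained by pairing the uniform trace identity $\tau_N\circ\theta_n=\tau_M$ with the finite-trace exhaustion $(p_k)$ — this is exactly the mechanism of Lemma~\ref{lemma:point_strong_dense_subset}, and that lemma then also upgrades $\theta_n\to\theta$ from $X$ to all of $M$. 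Controlling the limit in the genuinely semi-finite (non-finite-trace) case is the step I expect to be the main obstacle. Next, (c) produces $\mathbbm v_g:=\text{strong-}\lim_n v_n\beta_g(v_n)^*\in\mathcal U(N)$ with the convergence uniform on compacts, so that $\mathbbm v\colon G\to\mathcal U(N)$ is strongly continuous; since each $v_n\beta_g(v_n)^*$ is a $\beta$-cocycle and products of uniformly bounded strongly convergent sequences converge strongly to the product, $\mathbbm v$ is a $\beta$-cocycle. Passing to the limit in $\theta_n\circ\alpha_g=\mathrm{Ad}\bigl(v_n\beta_g(v_n)^*\bigr)\circ\beta_g\circ\theta_n$ yields $\theta\circ\alpha_g=\mathrm{Ad}(\mathbbm v_g)\circ\beta_g\circ\theta$, so $(\theta,\mathbbm v)$ is a cocycle morphism $(M,\alpha)\to(N,\beta)$ implemented by a $*$-isomorphism, i.e.\ a cocycle conjugacy in the sense of Remark~\ref{rem:cocycle-category}.

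For the statement about centres, I would first prove $\rho(\mathcal Z(M))\subseteq\mathcal Z(N)$: given $z\in\mathcal Z(M)$ and $b\in N$, pick $(x_n)\subseteq(M)_1$ with $w_n\rho(x_n)w_n^*\to b$ strongly (second bullet); then for every $n$ the \emph{exact} identity $[\,w_n\rho(z)w_n^*,\,w_n\rho(x_n)w_n^*\,]=w_n\rho([z,x_n])w_n^*=0$ holds, and letting $n\to\infty$, using $w_n\rho(z)w_n^*\to\rho(z)$ strongly (first bullet) and continuity of multiplication for uniformly bounded strongly convergent sequences, gives $[\rho(z),b]=0$. Consequently every $v_n$ commutes with $\rho(\mathcal Z(M))\subseteq\mathcal Z(N)$, so $\theta_n|_{\mathcal Z(M)}=\rho|_{\mathcal Z(M)}$ for all $n$ and hence $\theta|_{\mathcal Z(M)}=\rho|_{\mathcal Z(M)}$; since $\theta$ is a $*$-isomorphism this also gives $\rho(\mathcal Z(M))=\theta(\mathcal Z(M))=\mathcal Z(N)$.

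Finally, suppose $\tau_N$ is finite and $\phi=\tau_N$. Then the strong topology on bounded sets is the $\|\cdot\|_{\tau_N}$-topology, so $\theta_n\to\theta$ and $v_n\beta_g(v_n)^*\to\mathbbm v_g$ (uniformly on compacts) are exactly the defining conditions of Definition~\ref{def:approx-ue}, giving $\rho\approx_{\mathrm{u}}(\theta,\mathbbm v)$. Conversely, if $\rho\approx_{\mathrm{u}}(\theta,\mathbbm v)$ for a cocycle conjugacy $(\theta,\mathbbm v)$, fix unitaries $s_m\in\mathcal U(N)$ with $\|s_m\rho(x)s_m^*-\theta(x)\|_{\tau_N}\to0$ and $\max_{g\in K}\|s_m\beta_g(s_m^*)-\mathbbm v_g\|_{\tau_N}\to0$, and set $w_n:=s_{m(n)}^*\,s_{m'(n)}$ where at each step $m(n)$ is chosen large first and $m'(n)>m(n)$ much larger afterwards: condition~(1) of the hypothesis follows from $\|s_{m'(n)}\rho(x)s_{m'(n)}^*-\theta(x)\|_{\tau_N}\to0$ together with $\|s_{m(n)}^*\theta(x)s_{m(n)}-\rho(x)\|_{\tau_N}\to0$; condition~(3) from $s_m\beta_g(s_m^*)\to\mathbbm v_g$ and $\mathbbm v_g\mathbbm v_g^*=1$; and condition~(2) from surjectivity of $\theta$ — with $m(n)$ already fixed, $s_{m(n)}y_j s_{m(n)}^*$ is a fixed element for each $j\le n$, so for $m'(n)$ large enough $w_n$ conjugates $\rho\bigl(\theta^{-1}(s_{m(n)}y_js_{m(n)}^*)\bigr)\in\rho((M)_1)$ onto $y_j$ within $2^{-n}$, and a diagonal choice $y_{j(n)}\to y$ then yields the required sequence $(x_n)$. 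The remaining verifications — the cocycle relations, continuity of $\mathbbm v$, and the various bookkeeping steps — are comparatively routine, though the equivariant parts rely on the compactness arguments indicated above.
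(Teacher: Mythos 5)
Your proposal follows essentially the same route as the paper's proof: an inductive one-sided intertwining with telescoping estimates, approximate surjectivity fed back into the induction via auxiliary elements, a uniform-on-compacts Cauchy argument for the cocycle, and the same ``difference of two far-apart terms'' construction of the $w_n$ for the converse direction in the finite case. The bookkeeping differences (testing against finitely many vectors of $L^2(N,\tau_N)$ rather than against the conjugated states $\phi\circ\mathrm{Ad}(z_1\cdots z_{n-1})$, and taking products of the given $w_m$ directly rather than re-selecting unitaries at each stage) are immaterial, and your treatment of $\rho(\mathcal{Z}(M))\subseteq\mathcal{Z}(N)$ is in fact more explicit than the paper's.

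Two points deserve attention. First, the step you flag as the main obstacle --- producing the limit $\theta$ as a normal unital $*$-homomorphism on all of $M$ --- is indeed the crux, and it is not supplied by Lemma~\ref{lemma:point_strong_dense_subset}: that lemma presupposes that the limit $*$-isomorphism already exists and only upgrades pointwise convergence from a dense set to all of $M$. The paper's device is to note that the partial limit $\theta'$ on the C$^*$-algebra generated by the chosen countable set is trace-preserving, hence induces an isometry $T\colon L^2(M,\tau_M)\to L^2(N,\tau_N)$ via $T[a]=[\theta'(a)]$, and to define $\theta=T(\cdot)T^*$; your ingredients (trace preservation plus the finite-trace projections $p_k$) are the right ones, but this isometry construction is the argument you have not written down. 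Second, your condition (c) controls only $\|(v_{n+1}\beta_g(v_{n+1})^*-v_n\beta_g(v_n)^*)\xi\|$, i.e.\ strong convergence, whereas the paper controls the $\|\cdot\|_\phi^\sharp$-norm and so obtains strong-$*$ convergence; a strong limit of unitaries is a priori only an isometry, so unitarity of $\mathbbm{v}_g$ and the passage to the limit in $\mathrm{Ad}(v_n\beta_g(v_n)^*)\circ\beta_g\circ\theta_n$ do not come for free. This is repairable inside your own argument: the cocycle identity, which you do derive from strong convergence of products, gives $\mathbbm{v}_g\,\beta_g(\mathbbm{v}_{g^{-1}})=1$, so $\mathbbm{v}_g$ is a right-invertible isometry and hence unitary, after which strong convergence of unitaries to a unitary is automatically strong-$*$ and the intertwining relation passes to the limit --- but as written this needs to be said.
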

\begin{proof}
We note right away that the first two conditions above can always be tested on self-adjoint elements, hence one can equivalently state them with the strong-$*$ topology. 
Denote
\[\mathfrak{m} := \{x \in M \mid \tau_M(x^*x) < \infty\} \subset M.\]
We let $L^2(M,\tau_M)$ denote the GNS-Hilbert space of $M$ with respect to $\tau_M$.
Similarily, we use the notation $L^2(N, \tau_N)$.

Choose a countable subset $X = \{x_n\}_{n \in \N}$ in $(M)_1$ such that $X \cap \mathfrak{m}$ is $\|\cdot\|_{\tau_M}$-dense in $(\mathfrak{m})_1$. 
Take a strongly dense sequence $\{y_n\}_{n \in \N}$ in $(N)_1$.
Choose an increasing sequence of compact subsets $K_n \subseteq G$ such that the union is all of $G$. 

We are going to create a map $\theta: M\to N$ via an inductive procedure. 
For the first step, we choose $x_{1,1} \in (M)_1$ and $z_1 \in \mathcal{U}(N)$ such that
\begin{itemize}
\item $ \|z_1 \rho(x_1) z_1^* - \rho(x_1)\|^\sharp_\phi \leq 1/2$;
\item $\|y_1 - z_1\rho(x_{1,1})z_1^*\|_\phi \leq 1/2$;
\item $\max_{g \in K_1} \|\beta_g(z_1^*) - z_1^*\|_\phi \leq 1/2$.
\end{itemize}
Now assume that after the $n$-th step of the induction we have found $z_1, \hdots, z_n \in \mathcal{U}(N)$ and ${\{x_{l,j}\}_{j \leq l \leq n} \subset (M)_1}$ such that
\begin{enumerate}
\item $\|z_n \rho(x_j) z_n^* - \rho(x_j)\|^\sharp_{\phi \circ \mathrm{Ad}(z_1 \hdots z_{n-1})} \leq 2^{-n}$ for $j= 1, \hdots, n$; \label{eq:commutation_dense}
\item $\|z_n \rho(x_{l,j}) z_n^* - \rho(x_{l,j})\|^\sharp_{\phi \circ \mathrm{Ad}(z_1 \hdots z_{n-1})} \leq 2^{-n}$ for $l=1, \hdots, n-1$ and $j = 1, \hdots, l$; \label{eq:commutation_close_elements}
\item $ \|  z_{n-1}^* \hdots z_1^*  y_j z_1 \hdots z_{n-1} - z_n\rho(x_{n,j})z_n^*\|_{\phi \circ \mathrm{Ad}(z_1 \hdots z_{n-1})} \leq 2^{-n}$ for $j=1, \hdots, n$; \label{eq:close_elements}
\item 
$\max_{g \in K_n} \|\beta_g(z_n^*) - z_n^*\|_{\phi \circ \mathrm{Ad}(z_1 \hdots z_{n-1})} \leq 2^{-n}$ and

$\max_{g \in K_n} \|\beta_g(z_n^*) - z_n^*\|_{\phi \circ \mathrm{Ad}(\beta_g(z_1 \hdots z_{n-1}))} \leq 2^{-n}$.
 \label{eq:invariance}

\label{eq:approximate_fixedness}
\end{enumerate}
Then by our assumptions we can find $z_{n+1} \in \mathcal{U}(N)$ and $\{x_{n+1,j}\}_{j \leq n+1}\subset (M)_1$ such that
\begin{itemize}
\item $\|z_{n+1} \rho(x_j)z_{n+1}^* - \rho(x_j)\|^\sharp_{\phi \circ \mathrm{Ad}(z_1 \hdots z_n)} \leq 2^{-(n+1)}$ for $j= 1, \hdots, n+1$;
\item $\|z_{n+1} \rho(x_{l,j})z_{n+1}^* - \rho(x_{l,j})\|^\sharp_{\phi \circ \mathrm{Ad}(z_1 \hdots z_n)} \leq 2^{-(n+1)}$ for $l=1, \hdots, n$ and $j = 1, \hdots, l$;
\item $ \|z_{n}^* \hdots z_1^*y_jz_1 \hdots z_{n} - z_{n+1}\rho(x_{n+1,j})z_{n+1}^*\|_{\phi \circ \mathrm{Ad}(z_1 \hdots z_n)} \leq 2^{-(n+1)}$ for $j=1, \hdots, n+1$;
\item $\max_{g \in K_{n+1}} \|\beta_g(z_{n+1}^*)- z_{n+1}^*\|_{\phi \circ \mathrm{Ad}(z_1 \hdots z_n)} \leq 2^{-(n+1)}$ and

$\max_{g \in K_{n+1}} \|\beta_g(z_{n+1}^*)- z_{n+1}^*\|_{\phi \circ \mathrm{Ad}(\beta_g(z_1 \hdots z_n))} \leq 2^{-(n+1)}$. 
\end{itemize}
We carry on inductively and obtain a sequence of unitaries $(z_n)_{n \in \N}$ in $\mathcal{U}(N)$ and a family $\{ x_{n,j} \}_{n\in\N, j\leq n}\subset (M)_1$.
For each $n \in \N$, we define $u_n=z_1 \hdots z_n$ and the normal $*$-homomorphism ${\theta_n\colon M \rightarrow N}$ by $\theta_n = \mathrm{Ad}(u_n) \circ \rho$.

For $n > m$ and $j=1, \hdots, m+1$ we get
\begin{align*}
\|\theta_n(x_j) - \theta_m(x_j)\|^\sharp_{\phi} &\leq 
\sum_{k=m}^{n-1} \|\theta_{k+1}(x_j) - \theta_k(x_j)\|^\sharp_\phi\\
&= \sum_{k=m}^{n-1} \|z_{k+1}\rho(x_j)z_{k+1}^* - \rho(x_j)\|^\sharp_{\phi \circ \mathrm{Ad}(z_1 \hdots z_k)}\\
\overset{\ref{eq:commutation_dense}}&{\leq}  \sum_{k=m}^{n-1} 2^{-k-1}.
\end{align*}
We see that for all $j\in \N$ the sequence $(\theta_n(x_j))_{n \in \N}$ is norm-bounded and Cauchy with respect to $\|\cdot\|_\phi^\sharp$.
 This means that it converges to some element in $N$ in the strong-$*$-operator topology.
  A similar calculation using \ref{eq:commutation_close_elements} shows that
for $n > m \geq l \geq j$  
\begin{equation}\label{eq:convergence_close_elements}\|\theta_n(x_{l,j}) - \theta_m(x_{l,j})\|^\sharp_{\phi} < \sum_{k=m}^{n-1} 2^{-k-1}, \end{equation}
so the sequence $(\theta_n(x_{l,j}))_{n \in \N}$ also converges in the strong-$*$-operator topology for all $j \leq l$.
 Since $\theta_n$ is a $*$-homomorphism for all $n \in \N$, we conclude that, restricted to the \cstar-algebra $A\subset M$ generated by $\{x_n\}_{n \in \N} \cup \{x_{l,j}\}_{j \leq l}$, the sequence $(\theta_n)_{n \in \N}$ converges point-$*$-strongly to a $*$-homomorphism $\theta'\colon A \rightarrow N$.
Since $A$ contains a $\|\cdot\|_{\tau_M}$-dense subset of $\mathfrak{m}$, and clearly $\tau_N\circ\theta'=\tau_M|_A$, there is a unique isometry $T\colon L^2(M, \tau_M) \rightarrow L^2(N, \tau_N)$ induced from the formula $T[a]=[\theta'(a)]$ for all $a\in A\cap\mathfrak{m}$.
 Then the normal $*$-homomorphism
\[\theta \colon M \rightarrow N \colon x \mapsto T x T^*\]
 extends $\theta'$ and $\left(\theta_n \big \lvert_\mathfrak{m}\right)_{n \in \N}$ converges point-strongly to $\theta\lvert_\mathfrak{m}$.

We claim that $\theta$ is an isomorphism.
Clearly $\tau_N\circ\theta=\tau_M$ and so $\theta$ is injective.
By applying \ref{eq:close_elements} we find for all $m \geq j$ that
\begin{equation*}
\|\theta_m(x_{m,j}) - y_j\|_\phi = \|z_m\rho(x_{m,j})z_m^* -  z_{m-1}^* \hdots z_1^*  y_j z_1 \hdots z_{m-1}\|_{\phi \circ \mathrm{Ad}(z_1 \hdots z_{m-1})} < 2^{-m}.
\end{equation*}
Combining this with \eqref{eq:convergence_close_elements} for $l=m$ and $n \rightarrow \infty$ we find that
\[\|\theta(x_{m,j}) - y_j\|_\phi \leq \|\theta'(x_{m,j}) - \theta_m(x_{m,j})\|_\phi + \|\theta_m(x_{m,j}) - y_j\|_\phi \leq 2^{-m} + 2^{-m} = 2^{-m+1}.\]
Since the $y_j$ are strongly dense in the unit ball of $N$ and $\theta$ is normal, this implies surjectivity of $\theta$.
By Lemma~\ref{lemma:point_strong_dense_subset} it then follows that $\theta_n\to \theta$ point-strongly as $n\to\infty$.
Since $\theta_n$ is a unitary perturbation of $\rho$ for each $n$, this implies $\rho|_{\mathcal{Z}(M)}=\theta_n|_{\mathcal{Z}(M)}\to\theta|_{\mathcal{Z}(M)}$ and in particular $\rho(\mathcal{Z}(M))=\theta(\mathcal{Z}(M))=\mathcal{Z}(N)$.

For $n > m$ and $g \in K_{m+1}$ we have
\begin{align*}
&\|z_1 \hdots z_n \beta_g(z_n^* \hdots z_1^*) - z_1 \hdots z_m \beta_g(z_m^*\hdots z_1^*)\|_{\phi}^\sharp\\
&\leq \sum_{k=m}^{n-1} \|z_1 \hdots z_k(z_{k+1}\beta_g(z_{k+1}^*) - 1) \beta_g(z_k^*\hdots z_1^*)\|_{\phi}^\sharp\\
&=\sum_{k=m}^{n-1} \big( \|\beta_g(z_{k+1}^*) - z_{k+1}^*\|^2_{\phi \circ \mathrm{Ad}(\beta_g(z_1 \hdots z_k))} + \|\beta_g(z_{k+1}^*) - z_{k+1}^*\|^2_{\phi \circ \mathrm{Ad}(z_1 \hdots z_k)} \big)^{1/2}\\
\overset{\ref{eq:invariance}}&{\leq} \sqrt{2} \sum_{k=m}^{n-1} 2^{-(k+ 1)}.
\end{align*}
From this calculation we see that for every $g \in G$ the sequences $(z_1 \hdots z_n \beta_g(z_n^*\hdots z_1^*))_{n \in \N}$ are Cauchy with respect to $\|\cdot\|^\sharp_\phi$, with uniformity on compact sets. 
It follows that for every $g \in G$, the strong-$*$ limit $\mathbbm{v}_g = \lim_{n \rightarrow \infty} u_n \beta_g(u^*_n)$ exists in $\mathcal{U}(N)$ and that this convergence is uniform (w.r.t.\ $\|\cdot\|^\sharp_\phi$) on compact sets. 
Since $\beta$ is point-strong continuous, this implies the continuity of the assignment $g \mapsto \mathbbm{v}_g$. 

Moreover, for each $g \in G$ and $x \in M$ we have the equalities of limits with respect to the strong operator topology:
\begin{align*}
(\theta \circ \alpha_g) (x) &= \lim_{n \rightarrow \infty} (\mathrm{Ad}(u_n) \circ \rho \circ \alpha_g)(x)\\
&= \lim_{n \rightarrow \infty} (\mathrm{Ad}(u_n) \circ \beta_g \circ \rho) (x)\\
&= \lim_{n \rightarrow \infty} u_n \beta_g(u_n^*)\beta_g(u_n \rho(x) u_n^*) \beta_g(u_n) u_n^*\\
&= (\mathrm{Ad}(\mathbbm{v}_g) \circ \beta_g \circ \theta) (x).
\end{align*}
It follows that $(\theta,\mathbbm{v})$ is a cocycle conjugacy. 

For the last part of the statement, assume that $\tau_N$ is finite.
Then our previous calculations show that in the above situation, $\rho$ is approximately unitarily equivalent to $\theta$.
Conversely, suppose $\rho$ is approximately unitarily equivalent to a cocycle conjugacy $(\theta,\mathbbm{v})$.
In particular, there exists a sequence $(u_n)_{n \in \N} \in \mathcal{U}(N)$ such that $\|u_n\rho(x)u_n^* -  \theta(x)\|_{\tau_N} \rightarrow 0$ for all $x \in M$ and $\|u_n \beta_g(u_n^*) - \mathbbm{v}_g\|_{\tau_N} \rightarrow 0$ uniformly over compact subsets of $G$.
Choose a sequence $\{y_n\}_{n \in \N} \subset (N)_1$ that is strongly dense in $(N)_1$. For all $k,n \in \N$ define $x_{n,k} = \theta^{-1}(u_ny_ku_n^*)$. Then choose an increasing sequence $(m(n))_{n \in \N} \subset \N$ such that
 \begin{equation*}
 \lim_{n \rightarrow \infty} \| \theta(x_{n,k}) - u_{m(n)} \rho (x_{n,k}) u_{m(n)}^*\|_{\tau_N} = 0 \quad \text{for } k \in \N.\end{equation*}
Define $w_n := u_n^*u_{m(n)}$. One can check that these satisfy the assumptions in the lemma. 
\end{proof}

\section{Strongly self-absorbing actions}

\begin{definition}[cf.\ {\cite[Definition 5.1]{Szabo21cc}}] \label{def:strong-absorption}
Let $\alpha:G \acts M$ and $\delta\colon G \acts N$ be two actions of a second-countable locally compact group on finite von Neumann algebras $M$ and $N$ with separable predual.
We say that $\alpha$ \emph{strongly absorbs} $\delta$ if the equivariant embedding
\[\mathrm{id}_M \otimes 1_N\colon (M, \alpha) \rightarrow (M \bar{\otimes} N, \alpha \otimes \delta)\]
is approximately unitarily equivalent to a cocycle conjugacy.
\end{definition}

\begin{definition} \label{def:ssa-action}
Let $G$ be a second-countable locally compact group.
Let $\delta\colon G \acts \mathcal{R}$ be an action on the hyperfinite II$_1$ factor.
We say that $\delta$ is \emph{strongly self-absorbing}, if $\delta$ strongly absorbs $\delta$.
\end{definition}

\begin{definition}
Let $G$ be a second-countable locally compact group.
Let $\alpha\colon G \acts \mathcal{R}$ be an action on the hyperfinite II$_1$ factor.
We say $\alpha$ has \emph{approximately inner half-flip} if the two equivariant embeddings
\[
\mathrm{id}_\mathcal{R} \otimes 1_\mathcal{R}, 1_\mathcal{R} \otimes \mathrm{id}_\mathcal{R}\colon (\mathcal{R}, \alpha) \rightarrow (\mathcal{R} \bar{\otimes} \mathcal{R}, \alpha \otimes \alpha)
\]
are approximately unitarily equivalent (as cocycle morphisms).
\end{definition}

\begin{remark} \label{rem:ssa-misc}
It is well-known that any infinite-dimensional tracial von Neumann algebra $N$ with approximately inner half-flip in the above sense (with $G=\{1\}$) must be isomorphic to $\mathcal R$.
Indeed, it is clear that $N$ must have trivial center, which implies that it is a II$_1$ factor.
Then $N \cong \mathcal{R}$ follows from \cite[Theorem 5.1]{Connes76} under the stronger condition that the flip automorphism on $N\bar{\otimes} N$ is approximately inner, but the weaker condition is seen to be enough via Connes' theorem and the obvious modification of the proof of \cite[Proposition 2.8]{EffrosRosenberg78} that shows the semi-discreteness of $N$.
\end{remark}

\begin{example}\label{example:trivial_action_R}
For any second-countable locally compact group $G$, the trivial action $\mathrm{id}_\mathcal{R}\colon G \acts \mathcal{R}$ has approximately inner half-flip as a consequence of the flip automorphism on a tensor product of matrix algebras $M_n(\C) \otimes M_n(\C)$ being inner. 
It is also seen to be a strongly self-absorbing action.
\end{example}

\begin{remark} \label{rem:compact-fails-A}
Clearly, if a given action $\alpha: G\curvearrowright M$ is cocycle conjugate to $\alpha\otimes\mathrm{id}_{\mathcal R}: G\curvearrowright M\bar{\otimes}\mathcal R$, then the naturality of the crossed product construction implies that $M\rtimes_\alpha G$ is a von Neumann algebra with the McDuff property.
We can use this to argue why the statement in Theorem~\ref{theorem-A} does not extend to the case of actions of topological groups.

Let us pick any ergodic measure-preserving transformation $T: (X,\mu)\to (X,\mu)$ on a standard probability space.
Then the crossed product von Neumann algebra $L^\infty(X)\rtimes_T\mathbb Z$ is well-known to be isomorphic to $\mathcal R$.
If we use this identification to induce a circle action $\alpha: \mathbb{T}\curvearrowright\mathcal R$ from the dual action, then it follows with Takesaki--Takai duality \cite[Chapter X, Theorem 2.3]{Takesaki03} that $\mathcal{R}\rtimes_\alpha\mathbb{T}\cong \mathcal{B}(\ell^2(\mathbb{Z}))\bar{\otimes} L^\infty(X)$.
By what we observed above, it follows that $\alpha$ cannot be cocycle conjugate to $\alpha\otimes\mathrm{id}_{\mathcal R}$.
\end{remark}

\begin{theorem} \label{theorem:sufficient_criterium_strong_absorption}
Let $G$ be a second-countable locally compact group.
Let $\alpha\colon G \acts M$ be an action on a semi-finite von Neumann algebra with separable predual.
Suppose that $\delta\colon G \acts \mathcal{R}$ is an action with approximately inner half-flip such that there exists a unital equivariant $*$-homomorphism $(\mathcal{R}, \delta) \rightarrow (M_{\omega,\alpha}, \alpha_\omega)$.
Then there exists a cocycle conjugacy $(\theta,\mathbbm{v}): (M,\alpha) \to (M\bar{\otimes}\mathcal{R},\alpha \otimes \delta)$ with $\theta|_{\mathcal{Z}(M)}=\operatorname{id}_{\mathcal{Z}(M)}\otimes 1_{\mathcal R}$.
If $M$ is finite, then $\alpha$ strongly absorbs $\delta$.
\end{theorem}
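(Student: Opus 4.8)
The plan is to derive everything from the one-sided intertwining result, Lemma~\ref{lemma:one-sided_intertwining}, applied to the equivariant embedding $\rho := \mathrm{id}_M \otimes 1_{\mathcal R}\colon (M,\alpha) \to (N,\alpha\otimes\delta)$, where $N := M\bar{\otimes}\mathcal R$. Fix a faithful normal semi-finite trace $\tau_M$ and a faithful normal state $\phi_M$ on $M$ (taking $\phi_M=\tau_M$ when $M$ is finite), and set $\tau_N := \tau_M\otimes\tau_{\mathcal R}$, $\phi := \phi_M\otimes\tau_{\mathcal R}$; then $\rho$ is a unital equivariant normal $*$-homomorphism with $\tau_N\circ\rho=\tau_M$. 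It therefore suffices to produce a sequence of unitaries $(w_n)_{n\in\N}$ in $\mathcal U(N)$ satisfying the three bulleted conditions of Lemma~\ref{lemma:one-sided_intertwining} with respect to $\phi$. The statement about the center then follows because $\rho|_{\mathcal Z(M)}=\mathrm{id}_{\mathcal Z(M)}\otimes 1_{\mathcal R}$ and $\mathcal Z(N)=\mathcal Z(M)\otimes 1_{\mathcal R}$ (as $\mathcal R$ is a factor); and if $M$ is finite, then $\tau_N$ is finite and $\phi=\tau_N$, so the last part of Lemma~\ref{lemma:one-sided_intertwining} upgrades the existence of such a sequence to the assertion that $\rho$ is approximately unitarily equivalent to a cocycle conjugacy, which is exactly Definition~\ref{def:strong-absorption}.

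To build $(w_n)$, I would first exhibit two commuting equivariant copies of $(\mathcal R,\delta)$ inside the ultrapower $N^\omega$. On the one hand there is the honest copy $j\colon (\mathcal R,\delta)\to (N,\alpha\otimes\delta)$, $b\mapsto 1_M\otimes b$; since $\delta$ is continuous, each constant sequence $(1_M\otimes b)_n$ is $(\alpha\otimes\delta,\omega)$-equicontinuous, so $j(\mathcal R)$ lies in the equicontinuous part $N^\omega_{\alpha\otimes\delta}$ of Definition~\ref{def:equicontinuous_parts}. On the other hand, the given embedding $\Theta\colon(\mathcal R,\delta)\to(M_{\omega,\alpha},\alpha_\omega)$ induces a map $\tilde\Theta\colon(\mathcal R,\delta)\to(N_{\omega,\alpha\otimes\delta},(\alpha\otimes\delta)_\omega)$ by sending a representing sequence $(a_n)_n$ of $\Theta(a)$ to $(a_n\otimes 1_{\mathcal R})_n$: this is well defined and equivariant, and its image lies in $N_\omega\subseteq N^\omega\cap N'$ because sequences from $M\otimes 1$ asymptotically commute with every normal functional on $N$ and the equicontinuity is inherited. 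Consequently $\tilde\Theta(\mathcal R)$ commutes with both $j(\mathcal R)$ and $M\otimes 1_{\mathcal R}$ in $N^\omega$, and $\phi^\omega$ restricts to the (unique) normalized trace on the von Neumann algebra they generate, which yields an equivariant, trace-preserving embedding $\Lambda\colon(\mathcal R\bar{\otimes}\mathcal R,\delta\otimes\delta)\to(N^\omega_{\alpha\otimes\delta},(\alpha\otimes\delta)^\omega)$, $\Lambda(a\otimes b)=\tilde\Theta(a)(1_M\otimes b)$.

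Now invoke the approximately inner half-flip of $\delta$: there is a sequence $v_k\in\mathcal U(\mathcal R\bar{\otimes}\mathcal R)$ with $\|v_k(a\otimes 1)v_k^*-1\otimes a\|_{\tau_{\mathcal R}}\to 0$ for all $a\in\mathcal R$ and $\max_{g\in K}\|v_k(\delta\otimes\delta)_g(v_k^*)-1\|_{\tau_{\mathcal R}}\to 0$ for every compact $K\subseteq G$. Transporting through $\Lambda$, the unitaries $\Lambda(v_k)\in\mathcal U(N^\omega_{\alpha\otimes\delta})$ satisfy: (i) they commute with $M\otimes 1_{\mathcal R}=\rho(M)$ in $N^\omega$; (ii) $\Lambda(v_k)^*(1_M\otimes b)\Lambda(v_k)\to\tilde\Theta(b)$ in $\|\cdot\|_{\phi^\omega}$ as $k\to\infty$ (using that $\phi^\omega$ is tracial on $\Lambda(\mathcal R\bar{\otimes}\mathcal R)$, so the conjugation is $\phi^\omega$-isometric there); and (iii) $\max_{g\in K}\|(\alpha\otimes\delta)^\omega_g(\Lambda(v_k)^*)-\Lambda(v_k)^*\|^\sharp_{\phi^\omega}\to 0$ by equivariance of $\Lambda$. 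A reindexing/diagonalization then produces a genuine sequence $w_n\in\mathcal U(N)$ that lifts a subsequence of the $\Lambda(v_k)$ and simultaneously: from (i) gives $\mathrm{Ad}(w_n)\circ\rho\to\rho$ point-strongly; from (ii), since $\tilde\Theta(b)$ is represented by sequences in $M\otimes 1$, gives for each elementary tensor $m\otimes b$ — hence, by a standard approximation over a strong-dense subset of $(N)_1$, for every $y\in(N)_1$ — a sequence $x_n\in(M)_1$ with $y-w_n\rho(x_n)w_n^*\to 0$ strongly; and from (iii), via Lemma~\ref{lemma:lifting_invariance_compact_sets} applied to $\Lambda(v_k)\in N^\omega_{\alpha\otimes\delta}$ to convert pointwise invariance into invariance uniform over compact sets at the level of lifts, gives $\max_{g\in K}\|(\alpha\otimes\delta)_g(w_n^*)-w_n^*\|_\phi\to 0$. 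Feeding $(w_n)$ into Lemma~\ref{lemma:one-sided_intertwining} finishes the proof.

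I expect the main obstacle to be the bookkeeping in the final diagonalization: one must extract a single sequence $(w_n)$ in $N$ realizing all three conditions — especially the ``filling up'' condition~(2), whose passage from elementary tensors to all of $(N)_1$ needs the usual Rørdam-type approximation argument — from statements that only hold along the ultrafilter in $N^\omega$, while respecting the compact-set uniformity of the invariance through Lemma~\ref{lemma:lifting_invariance_compact_sets}. The conceptual point, by contrast, is clean: the half-flip allows one unitary to be at once approximately central for $M\otimes 1_{\mathcal R}$, approximately $(\alpha\otimes\delta)$-fixed on compacta, and still rotate the honest copy $1_M\otimes\mathcal R$ onto the ``invisible'' copy $\tilde\Theta(\mathcal R)$ that is already absorbed into $M$.
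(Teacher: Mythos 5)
Your proposal is correct and follows essentially the same route as the paper's proof: both build the ``invisible'' equivariant copy of $\mathcal R$ from the given embedding into $(M_{\omega,\alpha},\alpha_\omega)$, verify via the positive-tracial-functional argument that $\phi^\omega$ restricts to $\tau\otimes\tau$ on the two commuting copies so as to get a normal equivariant embedding of $(\mathcal R\bar\otimes\mathcal R,\delta\otimes\delta)$ into the equicontinuous ultrapower, push the half-flip unitaries through this embedding, and then lift via a diagonal argument together with Lemma~\ref{lemma:lifting_invariance_compact_sets} to feed the resulting sequence into Lemma~\ref{lemma:one-sided_intertwining}. The only differences are cosmetic (order of tensor factors, and packaging the commutation with $M$ into a map on $\mathcal R\bar\otimes\mathcal R\bar\otimes M$ rather than keeping it separate).
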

\begin{proof}
Fix a faithful normal state $\phi$ on $M$.
Let $\pi\colon (\mathcal{R},\delta) \rightarrow (M_{\omega,\alpha},\alpha_{\omega})$ be a unital equivariant $*$-homomorphism.
We obtain an induced a map on the algebraic tensor product
\[
\mathcal{R} \odot M \rightarrow M^\omega_\alpha \text{ via } x \otimes m \mapsto \pi(x) m.
\]
Since for each $m \in M_+$ the map $x \mapsto \phi^\omega(\pi(x)m)$ defines a positive tracial functional on $\mathcal{R}$,
 we see that it must be equal to some multiple of the unique tracial state $\tau$ on $\mathcal{R}$ and hence,
  we get for each $x \in \mathcal{R}$ and $m \in M$ that 
\[\phi^\omega(\pi(x) m) = \tau(x) \phi(m) = (\tau \otimes \phi)(x \otimes m).\]
So we see that the map sends the faithful normal state $\tau \otimes \phi$ to $\phi^\omega$ and hence,
 it extends to a unital normal $*$-homomorphism $\mathcal{R} \bar{\otimes} M \rightarrow M^\omega_\alpha$,
 which moreover is $(\delta \otimes \alpha)$-to-$\alpha^\omega$ equivariant.   
In this way we get a unital equivariant normal $*$-homomorphism
\[(\mathcal{R} \bar{\otimes} \mathcal{R} \bar{\otimes}M, \delta \otimes \delta \otimes \alpha) \rightarrow (\mathcal{R} \bar{\otimes} M^\omega_\alpha, \delta \otimes \alpha^\omega),\]
given by $x_1 \otimes x_2 \otimes m \mapsto x_1 \otimes (\phi(x_2) m)$. 
Composing with the canonical inclusion map $\iota\colon \mathcal{R} \bar{\otimes} M^\omega_\alpha \mapsto (\mathcal{R} \bar{\otimes} M)^\omega_{\delta\otimes\alpha}$ we get a unital and equivariant normal $*$-homomorphism
\[\Phi\colon (\mathcal{R} \bar{\otimes} \mathcal{R} \bar{\otimes} M, \delta \otimes \delta \otimes \alpha) \rightarrow ((\mathcal{R} \bar{\otimes} M)^\omega_{\delta \otimes \alpha}, (\delta \otimes \alpha)^\omega)\]
such that
\[
\Phi(x \otimes 1_\mathcal{R} \otimes m) = x \otimes m \text{ for all } x \in \mathcal{R}, m \in M,
\]
and
\[
\Phi(1_\mathcal{R} \otimes \mathcal{R} \otimes M) \subset \iota(1_\mathcal{R} \otimes M^\omega_\alpha).
\]
Since $\delta$ has approximately inner half-flip,
 we can choose a sequence of unitaries $(v_n)_{n \in \N}$ in $\mathcal{R} \bar{\otimes} \mathcal{R}$ such that 
 $\max_{g \in K}\|v_n - (\delta \otimes \delta)_g(v_n)\|_{\tau \otimes \tau} \rightarrow 0$ for all compact subsets $K \subseteq G$ and 
 ${\|x \otimes 1_\mathcal{R} - v_n (1_\mathcal{R} \otimes x)v_n^*\|_{\tau \otimes \tau}  \rightarrow 0}$ for all $x \in \mathcal{R}$. 
 
Define $u_n := \Phi(v_n \otimes 1_M) \subset (\mathcal{R} \bar{\otimes} M)^\omega_{\delta \otimes \alpha}$. This sequence of unitaries satisfies
\begin{itemize} 
\item $[u_n, 1_\mathcal{R} \otimes m] = \Phi([v_n \otimes 1_M, 1_{\mathcal{R} \bar{\otimes} \mathcal{R}} \otimes m]) = 0$ for all $m \in M$;
\item  $\Phi(1_\mathcal{R} \otimes x \otimes m) \in \iota(1_\mathcal{R} \otimes M^\omega_\alpha)$ and
\begin{align*}
\lim_{n \rightarrow \infty} u_n \Phi(1_\mathcal{R} \otimes x \otimes m) u_n^* &=\lim_{n \rightarrow \infty} \Phi((v_n \otimes 1_M)(1_\mathcal{R} \otimes x \otimes m)(v_n^* \otimes 1_M))\\
&=\Phi(x \otimes 1_\mathcal{R} \otimes m) \\&= x \otimes m
\end{align*}
where the limit is taken with respect to the strong operator topology;
\item $\displaystyle \max_{g \in K} \|u_n^* - (\delta \otimes \alpha)^\omega_g(u_n^*)\|_{(\tau \otimes \phi)^\omega} = \max_{g \in K} \|(v_n^* - (\delta \otimes \delta)_g(v_n^*)) \otimes 1_M\|_{\tau \otimes \tau \otimes \phi} \to 0$ for all compact $K \subseteq G$.
\end{itemize}

 Each $u_n$ can be lifted to a sequence of unitaries $(z_n^{(k)})_{k\in \N}$ in $\mathcal{E}_{\delta \otimes \alpha}^\omega \cap \mathcal{N}_\omega(\mathcal{R} \bar{\otimes} M)$.
 Applying a diagonal sequence argument to the $(z_n^{(k)})_{k\in \N}$ and using Lemma~\ref{lemma:lifting_invariance_compact_sets}, we can obtain a sequence of unitaries $(w_n)_{n \in \N}$ in $\mathcal{R} \bar{\otimes} M$ such that
\begin{itemize}
\item $\mathrm{Ad}(w_n)(1_\mathcal{R} \otimes m) - 1_\mathcal{R} \otimes m \rightarrow 0$ strongly for all $m \in M$.
\item $\inf_{m \in (M)_1}\|x - w_n(1_\mathcal{R} \otimes m)w_n^*\|_{\tau \otimes \phi} \rightarrow 0$ for $x \in (\mathcal{R} \bar{\otimes}M)_1$.
\item $\max_{g \in K} \|w_n^* - (\delta \otimes \alpha)_g(w_n^*)\|_{\tau \otimes \phi} \rightarrow 0$ for every compact subset $K \subseteq G$.
\end{itemize}
We conclude that the map $1_\mathcal{R} \otimes \mathrm{id}_M\colon (M, \alpha) \rightarrow (\mathcal{R} \bar{\otimes} M, \delta \otimes \alpha)$ satisfies all the necessary conditions to apply Lemma \ref{lemma:one-sided_intertwining}.
This completes the proof.
\end{proof}

\begin{theorem} \label{theorem:equivalence_ssa}
Let $G$ be a second-countable locally compact group.
Let $\delta:G \acts \mathcal{R}$ be an action on the hyperfinite II$_1$ factor.
Then $\delta$ is strongly self-absorbing if and only if it has approximately inner half-flip and there exists a unital equivariant $*$-homomorphism $(\mathcal{R}, \delta) \rightarrow (\mathcal{R}_{\omega,\delta}, \delta_\omega)$. 
\end{theorem}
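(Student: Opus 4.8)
\emph{The ``if'' direction} requires no new work: it is the special case $M=\mathcal R$, $\alpha=\delta$ of Theorem~\ref{theorem:sufficient_criterium_strong_absorption}. Since $\mathcal R$ is finite, that theorem turns the assumed approximately inner half-flip of $\delta$ together with the assumed unital equivariant $*$-homomorphism $(\mathcal R,\delta)\to(\mathcal R_{\omega,\delta},\delta_\omega)$ directly into the assertion that $\delta$ strongly absorbs $\delta$. So the content is entirely in the converse. Suppose then that $\delta$ is strongly self-absorbing and fix a cocycle conjugacy $\psi=(\theta,\mathbbm{v})\colon(\mathcal R,\delta)\to(\mathcal R\bar{\otimes}\mathcal R,\delta\otimes\delta)$ with $\psi\approx_{\mathrm{u}}\iota_1$, writing $\iota_1=\mathrm{id}_{\mathcal R}\otimes 1_{\mathcal R}$ and $\iota_2=1_{\mathcal R}\otimes\mathrm{id}_{\mathcal R}$. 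Before anything else I would record the routine facts that $\approx_{\mathrm{u}}$ is preserved under pre- and post-composition with cocycle morphisms (post-composition requiring the morphism to be trace-preserving, which is automatic for a cocycle conjugacy of $\mathrm{II}_1$ factors) and under tensoring with the identity cocycle morphism on either side; these are the book-keeping lemmas that drive both of the remaining steps.

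\emph{Approximately inner half-flip.} I would run the Toms--Winter argument for strongly self-absorbing \cstar-algebras inside the category of cocycle morphisms. Working with the cocycle conjugacies $\psi\bar{\otimes}\mathrm{id}$ and $\mathrm{id}\bar{\otimes}\psi$ from $(\mathcal R^{\bar{\otimes}2},\delta^{\otimes 2})$ to $(\mathcal R^{\bar{\otimes}3},\delta^{\otimes 3})$, one post-composes $\psi\approx_{\mathrm{u}}\iota_1$ with each of them and invokes $\psi\bar{\otimes}\mathrm{id}\approx_{\mathrm{u}}\iota_1\bar{\otimes}\mathrm{id}$ respectively $\mathrm{id}\bar{\otimes}\psi\approx_{\mathrm{u}}\mathrm{id}\bar{\otimes}\iota_1$ to obtain
\[
(\psi\bar{\otimes}\mathrm{id})\circ\psi\ \approx_{\mathrm{u}}\ \mathrm{id}_{\mathcal R}\otimes 1_{\mathcal R}\otimes 1_{\mathcal R}\ \approx_{\mathrm{u}}\ (\mathrm{id}\bar{\otimes}\psi)\circ\psi .
\]
Pre-composing both sides with $\psi^{-1}$ yields $\psi\bar{\otimes}\mathrm{id}\approx_{\mathrm{u}}\mathrm{id}\bar{\otimes}\psi$ and hence $\iota_1\bar{\otimes}\mathrm{id}\approx_{\mathrm{u}}\mathrm{id}\bar{\otimes}\iota_1$. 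Pre-composing the latter with $\iota_2$ and then post-composing with the tensor flip of the first and third legs gives $(\mathrm{id}_{\mathcal R\bar{\otimes}\mathcal R}\otimes 1_{\mathcal R})\circ\iota_1\approx_{\mathrm{u}}(\mathrm{id}_{\mathcal R\bar{\otimes}\mathcal R}\otimes 1_{\mathcal R})\circ\iota_2$. Finally, tensoring $\psi\approx_{\mathrm{u}}\iota_1$ with $\mathrm{id}_{(\mathcal R,\delta)}$ on the left identifies $\mathrm{id}_{\mathcal R\bar{\otimes}\mathcal R}\otimes 1_{\mathcal R}$ as approximately unitarily equivalent to the cocycle conjugacy $\mathrm{id}_{(\mathcal R,\delta)}\bar{\otimes}\psi$; post-composing the previous relation with the inverse of that conjugacy leaves $\iota_1\approx_{\mathrm{u}}\iota_2$, i.e.\ the approximately inner half-flip of $\delta$.

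\emph{The equivariant embedding.} I would put $\rho_i:=\psi^{-1}\circ\iota_i$ for $i=1,2$, regarded as cocycle morphisms $(\mathcal R,\delta)\to(\mathcal R,\delta)$. Because $\iota_1$ and $\iota_2$ have trivial cocycle, $\rho_1$ and $\rho_2$ carry the same cocycle, their underlying endomorphisms of $\mathcal R$ have commuting ranges with $\rho_1(a)\rho_2(b)=\theta^{-1}(a\otimes b)$, and $\rho_1\approx_{\mathrm{u}}\mathrm{id}_{(\mathcal R,\delta)}$ since $\psi^{-1}\circ\iota_1\approx_{\mathrm{u}}\psi^{-1}\circ\psi$. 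Extracting (using that $\mathcal R_*$ is separable and $G$ is $\sigma$-compact) a sequence of unitaries $u_n\in\mathcal U(\mathcal R)$ witnessing $\rho_1\approx_{\mathrm{u}}\mathrm{id}_{(\mathcal R,\delta)}$, I would set $\pi_n(a):=u_n\rho_2(a)u_n^*$ and $\pi(a):=[(\pi_n(a))_n]$. Then $\pi$ is a unital $*$-homomorphism because each $\pi_n$ is; it takes values in the asymptotic centralizer because $u_n^*cu_n\to\rho_1(c)$ in $\|\cdot\|_\tau$ while $\rho_1,\rho_2$ have commuting ranges; it takes values in the equicontinuous part and satisfies $\delta_\omega\circ\pi=\pi\circ\delta$ because the witnessing sequence asymptotically conjugates the (common) cocycle of $\rho_2$ to the unit and $g\mapsto\delta_g(a)$ is $\|\cdot\|_\tau$-continuous, so that $\delta_g(\pi_n(a))\approx\pi_n(\delta_g(a))$ uniformly on compacts; and $\pi$ is injective since $\mathcal R$ is a factor. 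This is exactly the sought-after unital equivariant $*$-homomorphism $(\mathcal R,\delta)\to(\mathcal R_{\omega,\delta},\delta_\omega)$, completing the converse.

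Since the ``if'' direction is free, all the substance is in the converse, and I expect the main obstacle to be organizational rather than conceptual: propagating the cocycles faithfully through the long chain of compositions, tensor products and cancellations in the half-flip step, and in the embedding step verifying that $(u_n\rho_2(a)u_n^*)_n$ genuinely lies in $\mathcal R_{\omega,\delta}$ rather than merely in $\mathcal R_\omega$ — the one place where one is forced to use the equivariance of the $u_n$, namely that they asymptotically conjugate the cocycle of $\rho_2$ back to the unit.
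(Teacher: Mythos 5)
Your proposal is correct and follows essentially the same route as the paper: the ``if'' direction is quoted from Theorem~\ref{theorem:sufficient_criterium_strong_absorption}, the half-flip is the Toms--Winter computation carried out in the category of cocycle morphisms (the paper works with $(\psi,\mathbbm{v})=(\phi,\mathbbm{u})^{-1}\circ(1_{\mathcal R}\otimes\mathrm{id}_{\mathcal R})$ and the flip on $\mathcal R\bar{\otimes}\mathcal R$ rather than your three-fold tensor bookkeeping, but the cancellations are the same), and your $\pi_n=\mathrm{Ad}(u_n)\circ\rho_2$ is exactly the paper's $\chi_n=\phi^{-1}\circ\mathrm{Ad}(u_n)\circ(1_{\mathcal R}\otimes\mathrm{id}_{\mathcal R})$ up to conjugating the witnessing unitaries by $\theta$. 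The verification of centrality, equicontinuity and equivariance, including the key use of the cocycle part of $\approx_{\mathrm{u}}$ to land in $\mathcal R_{\omega,\delta}$, matches the paper's.
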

\begin{proof}
The `if' direction follows immediately from the previous proposition.
To prove the other direction, we assume that $\delta$ is strongly self-absorbing and reproduce an argument analogous to \cite[Proposition 1.4]{TomsWinter07} and \cite[Proposition 5.5]{Szabo21cc}.
Denote the unique tracial state on $\mathcal{R}$ by $\tau$. Let $(\phi,\mathbbm{u})\colon (\mathcal{R}, \delta) \rightarrow (\mathcal{R} \bar{\otimes} \mathcal{R}, \delta \otimes \delta)$ be a cocycle conjugacy and $u_n \in \mathcal{U}(\mathcal{R} \bar{\otimes} \mathcal{R})$ a sequence of unitaries such that
\begin{equation}\label{eq:approx_coboundary}
\lim_{n \rightarrow \infty} \max_{g \in K} \| u_n(\delta \otimes \delta)_g(u_n^*) - \mathbbm{u}_g\|_{\tau \otimes \tau} = 0 \text{ for every compact } K \subseteq G
\end{equation} 
and
\[
\lim_{n \rightarrow \infty} \|\phi(x) - u_n(x \otimes 1) u_n^*\|_{\tau \otimes \tau} = 0 \text{ for all } x \in \mathcal{R}.
\]
Note that 
\[\mathrm{Ad}(u_n^*) \circ \phi \circ \delta_g = \mathrm{Ad}(u_n^*\mathbbm{u}_g (\delta \otimes \delta)_g(u_n)) \circ (\delta \otimes \delta)_g \circ \mathrm{Ad}(u_n^*) \circ \phi.\]
As a consequence of \eqref{eq:approx_coboundary}, then for every compact $K \subseteq G$ one has 
\begin{equation}\label{eq:approximate_invariance_perturbation}\lim_{n \rightarrow \infty} \max_{g \in K} \sup_{x \in (\mathcal{R})_1} \|(\mathrm{Ad}(u_n^*) \circ \phi \circ \delta_g)(x) - ((\delta \otimes \delta)_g \circ \mathrm{Ad}(u_n^*) \circ \phi)(x)\|_{\tau \otimes \tau} = 0.\end{equation}
In particular, applying this to $x = \phi^{-1}(u_n)$ and using that $(\tau \otimes \tau) = \tau \circ \phi^{-1}$ yields
\[\lim_{n \rightarrow \infty} \max_{g \in K}\|(\mathrm{Ad}(\phi^{-1}(u_n^*)) \circ \delta_g) (\phi^{-1}(u_n)) -(\phi^{-1}\circ (\delta \otimes \delta)_g)(u_n)\|_\tau = 0.\]
Combining this with \eqref{eq:approx_coboundary} again, one gets
\begin{equation}\label{eq:approx_coboundary_inverse}
 \lim_{n \rightarrow \infty} \max_{g \in K} \| \phi^{-1}(u_n^*)\, \delta_g(\phi^{-1}(u_n)) - \phi^{-1}(\mathbbm{u}^*_g)\|_{\tau \otimes \tau} = 0 \text{ for every compact } K \subseteq G.
\end{equation}
First, we prove that $\delta$ has approximately inner half-flip. 
Define the cocyle morphism $(\psi, \mathbbm{v}) := (\phi, \mathbbm{u})^{-1}  \circ (1_\mathcal{R} \otimes \mathrm{id}_\mathcal{R})$. Note that
\begin{align*}
1_\mathcal{R} \otimes \mathrm{id}_\mathcal{R} 
&= (\phi, \mathbbm{u}) \circ (\psi, \mathbbm{v})\\
&\approx_{\mathrm{u}} (\mathrm{id}_\mathcal{R} \otimes 1_\mathcal{R}) \circ (\psi, \mathbbm{v})\\
&= (\psi \otimes 1_\mathcal{R}, \mathbbm{v} \otimes 1).
\end{align*} 
Applying the equivariant flip automorphism to both sides of this equivalence, we get that
\begin{equation}\label{eq:approx_equivalence_first_factor_embedding} \mathrm{id}_\mathcal{R} \otimes 1_\mathcal{R} \approx_\mathrm{u} (1_\mathcal{R} \otimes \psi, 1 \otimes \mathbbm{v}) .\end{equation}
We also get
\begin{align*}
(\psi \otimes 1_\mathcal{R}, \mathbbm{v} \otimes 1) &= (\phi^{-1} \otimes \mathrm{id}_\mathcal{R}, \phi^{-1}(\mathbbm{u})^* \otimes 1) \circ (1_\mathcal{R} \otimes \mathrm{id}_\mathcal{R} \otimes 1_\mathcal{R})\\
\overset{\eqref{eq:approx_equivalence_first_factor_embedding}}&{\approx_{\mathrm{u}}} (\phi^{-1} \otimes \mathrm{id}_\mathcal{R}, \phi^{-1}(\mathbbm{u})^* \otimes 1) \circ (1_\mathcal{R} \otimes 1_\mathcal{R} \otimes \psi, 1\otimes 1 \otimes \mathbbm{v})\\
&=(1_\mathcal{R} \otimes \psi, \phi^{-1}(\mathbbm{u})^* \otimes \mathbbm{v})\\
\overset{\eqref{eq:approx_coboundary_inverse}}&{\approx_{\mathrm{u}}} (1_\mathcal{R} \otimes \psi, 1 \otimes \mathbbm{v}).
\end{align*}
By transitivity we get that ${1_\mathcal{R} \otimes \mathrm{id}_\mathcal{R}\approx_\mathrm{u} \mathrm{id}_\mathcal{R} \otimes 1_\mathcal{R}}$.

Next we prove the existence of a unital equivariant $*$-homomorphism $(\mathcal{R}, \delta) \rightarrow (\mathcal{R}_{\omega,\delta}, \delta_\omega)$. Define the sequence of trace-preserving $*$-homomorphisms 
\[\chi_n = \phi^{-1} \circ \mathrm{Ad}(u_n) \circ (1_\mathcal{R} \otimes \mathrm{id}_\mathcal{R}).\] 
We conclude from \eqref{eq:approximate_invariance_perturbation} that for all $x \in \mathcal{R}$
\[\lim_{n \rightarrow \infty} \max_{g \in K} \|\delta_g(\chi_n(x)) - \chi_n(\delta_g(x))\|_{\tau} =0.\]
From this and the fact that all $\chi_n$ are trace-preserving it also follows that $(\chi_n(x))_{n \in \N}$ belongs to $\mathcal{E}^\omega_\alpha$.
Moreover, for any $x,y \in \mathcal{R}$
\begin{align*}
\lim_{n \rightarrow \infty} \|[x, \chi_n(y)] \|_\tau
&= \lim_{n \rightarrow \infty}\|[\phi(x), u_n(1 \otimes y) u_n^*]\|_{\tau \otimes \tau}\\
&= \lim_{n \rightarrow \infty}\|u_n[x \otimes 1, 1 \otimes y] u_n^*\|_{\tau \otimes \tau}\\
&=0. 
\end{align*}
So the maps $\chi_n$ induce a unital equivariant $*$-homomorphism $(\mathcal{R}, \delta) \rightarrow (\mathcal{R}_{\omega,\delta}, \delta_\omega)$.
\end{proof}

The following can be seen as a direct generalization of the famous McDuff theorem \cite{McDuff70} to actions on semi-finite von Neumann algebras.

\begin{corollary} \label{cor:equivalence_equivariant_McDuff}
Let $G$ be a second-countable locally compact group.
Let $\alpha:G \acts M$ be an action on a semi-finite von Neumann algebra with separable predual and let $\delta:G \acts \mathcal{R}$ be a strongly self-absorbing action on the hyperfinite II$_1$ factor.
Then the following are equivalent:
\begin{enumerate}[leftmargin=*,label=\textup{(\arabic*)}]
\item There exists a cocycle conjugacy $(\theta,\mathbbm{v})\colon (M,\alpha) \to (M\bar{\otimes}\mathcal{R},\alpha \otimes \delta)$ with $\theta|_{\mathcal{Z}(M)}=\operatorname{id}_{\mathcal{Z}(M)}\otimes 1_{\mathcal R}$; \label{prop:McDuff:1}
\item $\alpha \simeq_{\mathrm{cc}} \alpha \otimes \delta$; \label{prop:McDuff:2}
\item There exists a unital equivariant $*$-homomorphism $(\mathcal{R},\delta) \rightarrow (M_{\omega,\alpha}, \alpha_\omega)$. \label{prop:McDuff:3}
\end{enumerate}
\end{corollary}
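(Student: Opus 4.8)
The proof will run through the cycle of implications $\ref{prop:McDuff:1}\Rightarrow\ref{prop:McDuff:2}\Rightarrow\ref{prop:McDuff:3}\Rightarrow\ref{prop:McDuff:1}$. The implication $\ref{prop:McDuff:1}\Rightarrow\ref{prop:McDuff:2}$ is a tautology: any cocycle conjugacy $(M,\alpha)\to(M\bar{\otimes}\mathcal{R},\alpha\otimes\delta)$ witnesses $\alpha\simeq_{\mathrm{cc}}\alpha\otimes\delta$. For $\ref{prop:McDuff:3}\Rightarrow\ref{prop:McDuff:1}$ I would feed the hypotheses into the machinery already developed: since $\delta$ is strongly self-absorbing, Theorem~\ref{theorem:equivalence_ssa} guarantees in particular that $\delta$ has approximately inner half-flip, so together with the given unital equivariant $*$-homomorphism $(\mathcal{R},\delta)\to(M_{\omega,\alpha},\alpha_\omega)$ we are exactly in the situation of Theorem~\ref{theorem:sufficient_criterium_strong_absorption}, whose conclusion is precisely statement~\ref{prop:McDuff:1}.

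The substantive implication is $\ref{prop:McDuff:2}\Rightarrow\ref{prop:McDuff:3}$, which I would split into two steps. Step one: the action $\alpha\otimes\delta$ always satisfies~\ref{prop:McDuff:3}. Indeed, Theorem~\ref{theorem:equivalence_ssa} provides a unital equivariant $*$-homomorphism $\pi\colon(\mathcal{R},\delta)\to(\mathcal{R}_{\omega,\delta},\delta_\omega)$. Fixing faithful normal states on $M$ and $\mathcal{R}$ and working with their tensor product, one observes that if a bounded sequence $(y_n)_n$ in $\mathcal{R}$ is $(\delta,\omega)$-equicontinuous and asymptotically central, then $(1_M\otimes y_n)_n$ is again bounded, it is $(\alpha\otimes\delta,\omega)$-equicontinuous because $(\alpha\otimes\delta)_g(1_M\otimes y_n)-1_M\otimes y_n=1_M\otimes(\delta_g(y_n)-y_n)$ has the same $\sharp$-seminorm as $\delta_g(y_n)-y_n$ does in $\mathcal{R}$, and it is asymptotically central in $M\bar{\otimes}\mathcal{R}$ since it commutes with $M\odot 1_{\mathcal{R}}$ exactly and with $1_M\odot\mathcal{R}$ asymptotically. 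Hence $[(y_n)_n]\mapsto[(1_M\otimes y_n)_n]$ is a well-defined unital $*$-homomorphism $(\mathcal{R}_{\omega,\delta},\delta_\omega)\to((M\bar{\otimes}\mathcal{R})_{\omega,\alpha\otimes\delta},(\alpha\otimes\delta)_\omega)$, and composing it with $\pi$ proves the claim.

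Step two: property~\ref{prop:McDuff:3} is invariant under cocycle conjugacy. Given a cocycle conjugacy $(\phi,\mathbbm{u})\colon(N,\beta)\to(N',\gamma)$, the $*$-isomorphism $\phi^\omega\colon N^\omega\to(N')^\omega$ restricts to a $*$-isomorphism $N_\omega\to N'_\omega$, and this restriction is equivariant for $\beta_\omega$ and $\gamma_\omega$: the twisting automorphisms $\mathrm{Ad}(\mathbbm{u}_g)$ act trivially on $\phi^\omega(N_\omega)\subseteq(N')^\omega\cap(N')'$ because each $\mathbbm{u}_g$ is a \emph{fixed} unitary of $N'$. One further checks, using asymptotic centrality to absorb the $\mathbbm{u}_g$-twist uniformly over compact subsets of $G$, that this isomorphism maps $N_{\omega,\beta}$ onto $N'_{\omega,\gamma}$. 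Applying this with $(N,\beta)=(M,\alpha)$ and $(N',\gamma)=(M\bar{\otimes}\mathcal{R},\alpha\otimes\delta)$ and combining with step one yields~\ref{prop:McDuff:3}.

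The step I expect to require the most care is the equicontinuity bookkeeping in step two: transporting not merely the ultrapower isomorphism but also the $(\alpha,\omega)$-equicontinuous part of the centralizer forces one to estimate the cocycle uniformly over compact subsets of $G$, in the same spirit as the inductive construction in Lemma~\ref{lemma:one-sided_intertwining}. Everything else is formal once Theorems~\ref{theorem:sufficient_criterium_strong_absorption} and~\ref{theorem:equivalence_ssa} are granted.
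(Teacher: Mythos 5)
Your proposal is correct and follows essentially the same route as the paper: (1)$\Rightarrow$(2) is tautological, (3)$\Rightarrow$(1) combines Theorem~\ref{theorem:equivalence_ssa} (strong self-absorption implies approximately inner half-flip) with Theorem~\ref{theorem:sufficient_criterium_strong_absorption}, and (2)$\Rightarrow$(3) reduces to producing a unital equivariant $*$-homomorphism $(\mathcal{R},\delta)\to((M\bar{\otimes}\mathcal{R})_{\omega,\alpha\otimes\delta},(\alpha\otimes\delta)_\omega)$ via the slice embedding of $\mathcal{R}_{\omega,\delta}$. The only difference is that you spell out the two facts the paper leaves implicit --- the invariance of property (3) under cocycle conjugacy and the equicontinuity/centrality bookkeeping for the embedding $y\mapsto 1_M\otimes y$ --- and your treatment of both is sound.
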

\begin{proof}
The implication \ref{prop:McDuff:1}$\Rightarrow$\ref{prop:McDuff:2} is tautological.
Since strong self-absorption implies approximately inner half-flip by Proposition~\ref{theorem:equivalence_ssa}, the implication \ref{prop:McDuff:3}$\Rightarrow$\ref{prop:McDuff:1} follows from Proposition~\ref{theorem:sufficient_criterium_strong_absorption}.

In order to prove \ref{prop:McDuff:2}$\Rightarrow$\ref{prop:McDuff:3}, it is enough to show that there exists a unital equivariant $*$-homomorphism $(\mathcal{R}, \delta) \rightarrow ((M \bar{\otimes} \mathcal{R})_{\omega,\alpha \otimes \delta}, (\alpha \otimes \delta)_\omega)$.
We know there exists a unital equivariant $*$-homomorphism $(\mathcal{R}, \delta) \rightarrow (\mathcal{R}_{\omega,\delta}, \delta_\omega)$ by Proposition \ref{theorem:equivalence_ssa}.
Since the latter is unitally and equivariantly contained in $((M \bar{\otimes} \mathcal{R})_{\omega,\alpha \otimes \delta}, (\alpha \otimes \delta)_\omega)$, this finishes the proof.
\end{proof}

The following lemma is a straightforward application of the noncommutative Rokhlin Theorem of Masuda \cite[Theorem 4.8]{Masuda13}.\footnote{This Rokhlin Theorem is actually a variant of Ocneanu's noncommutative Rokhlin Theorem \cite[Theorem 6.1]{Ocneanu85}, and the proof of Masuda's version is essentially the same as Ocneanu's proof.While it is possible to deduce what we need from Ocneanu's Theorem, here we cite Masuda's version for convenience of the reader, as it is directly applicable and there is no need to deal with $\e$-paving families of $G$.}
In the proof we use the following convention: Let $G$ be a discrete amenable group, $\e >0$ and $S \ssubset G$.
We say that $F \ssubset G$ is \emph{$(S,\e)$-invariant} if 
\[
\Big \lvert F \cap \bigcap_{g \in S} g^{-1}F\Big \rvert > (1-\e) |F|.
\]

\begin{lemma}\label{lem:approx-central-embeddings}
Let $\alpha\colon G \acts M$ be an action of a countable discrete group on a McDuff factor with separable predual.
Let $N\subseteq G$ be the normal subgroup consisting of all elements $g\in G$ such that $\alpha_{\omega,g} \in\operatorname{Aut}(M_\omega)$ is trivial.
Suppose that the quotient group $G_0=G/N$ is amenable with quotient map $\pi: G\to G_0$.
Let $\delta: G_0\curvearrowright\mathcal R$ be an action with induced $G$-action $\delta_\pi=\delta\circ\pi$. 
Then there exists an equivariant unital $*$-homomorphism $(\mathcal R,\delta_\pi)\to (M_\omega,\alpha_\omega)$.
\end{lemma}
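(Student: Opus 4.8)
The plan is to use Masuda's noncommutative Rokhlin theorem to produce, for each finite subset $F \ssubset G_0$ and each $\e > 0$, a family of projections in $M_\omega$ indexed by $F$ that behaves like a Rokhlin tower for the $G_0$-action induced on $M_\omega$, and then to patch these together via a diagonal/reindexing argument into an equivariant embedding of $(\mathcal R, \delta_\pi)$. The key point enabling this is that the $G$-action $\alpha_\omega$ on $M_\omega$ factors through $G_0 = G/N$ by the very definition of $N$, so we are really looking at a $G_0$-action $\bar\alpha_\omega\colon G_0 \curvearrowright M_\omega$. Since $G_0$ is amenable and $M$ is a McDuff factor (so $M_\omega$ is a II$_1$ factor that is "large" in the appropriate sense — it contains unital copies of $\mathcal R$ by McDuff's theorem applied in $M_\omega$), Masuda's Theorem 4.8 applies: the induced $G_0$-action on $M_\omega$, being centrally free as a $G_0$-action (again by construction of $N$), satisfies the Rokhlin property. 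Concretely, for every Følner-type $(S,\e)$-invariant finite $F \ssubset G_0$ there is a partition of unity $\{e_g\}_{g \in F} \subset M_\omega$ into projections with $\bar\alpha_{\omega,h}(e_g) \approx e_{hg}$ in trace, up to the boundary error governed by $\e$.

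The next step is the standard construction that turns Rokhlin towers into an equivariant $*$-homomorphism from the universal "matrix-like" model. Because $G_0$ is amenable, one fixes an increasing Følner sequence $(F_n)$ and builds, inside $M_\omega$, for each $n$ a unital copy of the finite-dimensional algebra $\mathbb C^{F_n}$ (spanned by the Rokhlin projections) together with matrix units moving between the tower levels using unitaries realizing the approximate intertwining $\bar\alpha_{\omega,h}(e_g) \approx e_{hg}$; this yields approximately equivariant, approximately multiplicative, trace-preserving maps from $M_{|F_n|}(\mathbb C)$ (or a suitable UHF-type building block) into $M_\omega$. A reindexing argument over $\omega$ — using that $M_\omega$ for a II$_1$ factor is again closed under taking ultrapowers of its own approximately central sequences, equivalently using $(M_\omega)_\omega$-type absorption — upgrades these approximate data to an honest unital $*$-homomorphism $\mathcal R_0 \to M_\omega$ that is exactly $G_0$-equivariant for the model action on a suitable strongly self-absorbing (indeed UHF-absorbing) $\mathcal R_0 \cong \mathcal R$ carrying the shift-type $G_0$-action. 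Finally, since every $\delta\colon G_0 \curvearrowright \mathcal R$ can be absorbed into (equivariantly embedded into the sequence algebra over) this model action — this is exactly Corollary~\ref{cor:equivalence_equivariant_McDuff} / Proposition~\ref{theorem:equivalence_ssa} applied with the model action, or more directly Theorem~\ref{theorem:model-absorption} — we may precompose to obtain an equivariant unital $*$-homomorphism $(\mathcal R, \delta) \to (M_\omega, \bar\alpha_\omega)$, which pulled back along $\pi$ is exactly the desired $(\mathcal R, \delta_\pi) \to (M_\omega, \alpha_\omega)$.

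The main obstacle I expect is bookkeeping the boundary errors in the Rokhlin towers against the approximate multiplicativity and approximate equivariance simultaneously, and in particular checking that the sequences one produces genuinely land in $\mathcal C_\omega$ (the asymptotically central sequences) rather than merely in $\mathcal N_\omega$ — i.e. that the embedded copy of $\mathcal R$ really commutes asymptotically with all of $M$, not just that it is a $*$-homomorphism into the ultrapower. This is where the hypothesis that $M$ is a McDuff \emph{factor} (as opposed to a general McDuff von Neumann algebra) is used essentially: centrality of the Rokhlin projections and of the matrix units comes from the fact that Masuda's theorem is proved inside the centralizer / asymptotic centralizer, and one must be careful that the diagonal reindexing does not destroy this. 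A secondary technical point is verifying that $G_0$ being amenable is all that is needed for the model action to exist with the right properties and that $N$ being normal (so that $G_0$ is genuinely a group) is used only to make $\delta_\pi = \delta \circ \pi$ a well-defined $G$-action; both are straightforward but should be stated.
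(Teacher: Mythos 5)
Your first half is on track: the action $\alpha_\omega$ does factor through a faithful $G_0$-action $\gamma$ on $M_\omega$, and applying Masuda's noncommutative Rokhlin theorem to $\gamma$ (justified via Ocneanu's Lemmas 5.6--5.7, since faithfulness on $M_\omega$ is what is needed there --- ``centrally free'' is not quite the right hypothesis to invoke for an action that already lives on $M_\omega$) is exactly how the paper begins. The genuine gap is in your endgame. You propose to first manufacture an equivariant copy of a \emph{model} (shift-type) action inside $(M_\omega,\gamma)$ and then embed the given $(\mathcal R,\delta)$ into the sequence algebra of that model, citing Corollary~\ref{cor:equivalence_equivariant_McDuff}, Theorem~\ref{theorem:equivalence_ssa}, or Theorem~\ref{theorem:model-absorption}. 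This fails for two reasons. First, it is circular within the paper's architecture: Theorem~\ref{theorem:model-absorption} is \emph{proved using} the present lemma, and Corollary~\ref{cor:equivalence_equivariant_McDuff} does not produce equivariant embeddings into central sequence algebras --- it takes one as a hypothesis. Second, and more fundamentally, the lemma assumes nothing about $\delta$: it need not be outer, strongly self-absorbing, or cocycle conjugate to a Bernoulli-type model. An arbitrary $\delta$ (e.g.\ the trivial action) certainly does not embed equivariantly into a shift model $(\mathcal R_0,\mu)$ itself, and the statement that it embeds equivariantly into $((\mathcal R_0)_\omega,\mu_\omega)$ is precisely an instance of the lemma you are trying to prove. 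So ``precompose with the model'' cannot close the argument.

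The paper avoids all of this with a one-step twisting construction that handles arbitrary $\delta$ directly. Take \emph{any} unital $*$-homomorphism $\Phi\colon\mathcal R\to M_\omega$ (this is where McDuffness enters), and when invoking Masuda's theorem require in addition that the Rokhlin projections $\{E_g\}_{g\in F}$ commute with every element of $\{\gamma_h(X) : h\in G_0,\ X\in\Phi(\mathcal R)\}$ --- the theorem permits prescribing such a countable commutation set. Then
\[
\Psi(x) \;=\; \sum_{g\in F}\gamma_g\big(\Phi(\delta_g^{-1}(x))\big)\,E_g
\]
is a unital trace-preserving $*$-homomorphism $\mathcal R\to M_\omega$ (multiplicativity uses exactly the commutation condition and that the $E_g$ are a partition of unity), and the tower estimates give $\|\gamma_s(\Psi(x))-\Psi(\delta_s(x))\|_1<10\e^{1/2}\|x\|$ for $s\in S$. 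A standard reindexing over $\omega$ then yields an exactly equivariant embedding $(\mathcal R,\delta_\pi)\to(M_\omega,\alpha_\omega)$. No model action, no matrix units, and no appeal to later absorption theorems is needed; if you want to salvage your write-up, replace your steps (b) and (c) with this averaging formula.
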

\begin{proof}
Consider the induced faithful action $\gamma: G_0\curvearrowright M_\omega$ via $\gamma_{gN}=\alpha_{\omega,g}$.
Then clearly the claim is equivalent to finding a $G_0$-equivariant unital $*$-homomorphism $(\mathcal R,\delta)\to (M_\omega,\gamma)$.
Let us introduce some notation.
Let $(x_n)_{n \in \N} \in \ell^\infty(M)$ be a sequence representing an element $X \in M_\omega$. Then we set $\tau_\omega(X) = \lim_{n \rightarrow \omega} x_n$,  where the limit is taken in the $\sigma$-weak topology.
Since $M$ is a factor and $\tau_\omega(X)$ is central, this limit belongs to $\C$. For any $\phi \in M_*$ we have
\[\phi^\omega(X) = \lim_{n \rightarrow \omega}\phi(x_n) = \phi(\tau_\omega(X)) = \tau_\omega(X).\]
In particular, $\tau_\omega$ defines a normal faithful tracial state on $M_\omega$ and we denote $\|X\|_1 = \tau_\omega(|X|)$.

Since $M$ is McDuff we can find a unital $*$-homomorphism $\Phi: \mathcal{R} \rightarrow M_\omega$. Fix $\e >0$ and a symmetric finite subset $S \ssubset G_0$ containing the neutral element.
By \cite[Lemmas 5.6 and 5.7]{Ocneanu85} we are allowed to apply \cite[Theorem 4.8]{Masuda13} to the action $\gamma\colon G_0\curvearrowright M_\omega$.
So if $F \ssubset G_0$ is a finite $(S,\e)$-invariant subset, then there exists a partition of unity of projections $\{E_g\}_{g \in F} \subset M_\omega$ such that
\begin{align}
\sum_{g \in s^{-1}F \cap F} \|\gamma_{s}(E_g) - E_{sg}\|_1 &< 4\e^{1/2} \text{ for all } s \in S; \label{eq:r-lemma1}\\
\sum_{g \in F\setminus s^{-1}F} \|E_g\|_1 &< 3\e^{1/2} \text{ for all } s \in S;\label{eq:r-lemma2}\\
[E_g, \gamma_h(X)] &= 0 \text{ for all } g \in F,\ h\in G_0,\ X \in \Phi(\mathcal{R}).\label{eq:commutation_image}
\end{align}
Define 
\[
\Psi: \mathcal{R} \rightarrow M_\omega \text{ via } \Psi(x) = \sum_{g\in F} \gamma_{g}(\Phi(\delta_{g}^{-1}(x))) E_g.
\]
This is a unital trace-preserving $*$-homomorphism because the projections $E_g$ form a partition of unity and condition \eqref{eq:commutation_image}.
For $s \in S$ and $x \in \mathcal{R}$ we use conditions \eqref{eq:r-lemma1} and \eqref{eq:r-lemma2} to observe
\begin{align*}
\|\gamma_{s}(\Psi(x)) - \Psi(\delta_s(x))\|_1 &=  \Big\| \sum_{g \in F} \gamma_{sg}(\Phi(\delta_g^{-1}(x))) \gamma_{s}(E_g) - \sum_{g \in s^{-1}F} \gamma_{sg}(\Phi(\delta_{g}^{-1}(x))) E_{sg} \Big\|_1
\\
&\leq \sum_{g \in F \cap s^{-1}F}  \left\| \gamma_{sg}(\Phi(\delta_g^{-1}(x))) ( \gamma_{s}(E_g) - E_{sg}) \right\|_1\\
& \qquad + \sum_{g \in F\setminus s^{-1}F} \|\gamma_g(\Phi(\delta_g^{-1}(x))) E_g\|_1 + \sum_{g \in F \setminus sF} \|\gamma_{g}(\Phi(\delta_{s^{-1}g}^{-1}(x)))E_g\|_1 \\
&< 10\e^{1/2}\|x\|.
\end{align*}
Since we can do this for arbitrary $\e >0$ and $S \ssubset G$, the claim follows via a standard reindexing trick.
\end{proof}

The following result recovers a famous result due to Ocneanu \cite[Theorem 1.2 and following remark]{Ocneanu85} as well as his uniqueness theorem of outer actions of amenable groups on $\mathcal R$.
We include this proof for the reader's benefit as it is comparably elementary with the methods established so far.

\begin{theorem} \label{theorem:model-absorption}
Let $G$ and $G_1$ be countable discrete groups with $G_1$ amenable.
Let $\delta: G_1\curvearrowright\mathcal R$ be an outer action and $\alpha\colon G \acts M$ an action on a semi-finite McDuff factor with separable predual.
Then:
\begin{enumerate}[label=\textup{(\roman*)},leftmargin=*]
\item $\delta$ is strongly self-absorbing and cocycle conjugate to any other outer action $G_1\curvearrowright\mathcal R$.\label{theorem:model-absorption:1}
\item Suppose $H\subseteq G$ is a normal subgroup containing all elements $g\in G$ such that $\alpha_{\omega,g}$ is trivial.
Suppose $G_1=G/H$ with quotient map $\pi: G\to G_1$. Consider the induced $G$-action $\delta_\pi = \delta \circ \pi$.
Then $\alpha \simeq_{\mathrm{cc}} \alpha \otimes \delta_\pi$. \label{theorem:model-absorption:2}
\end{enumerate}
\end{theorem}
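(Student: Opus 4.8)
The plan is to prove (i) first, since it feeds into (ii). Fix an outer action $\delta\colon G_1\acts\mathcal R$. To see that $\delta$ is strongly self-absorbing I would invoke Proposition~\ref{theorem:equivalence_ssa}: it suffices to produce a unital equivariant $*$-homomorphism $(\mathcal R,\delta)\to(\mathcal R_{\omega,\delta},\delta_\omega)$ and to show that $\delta$ has approximately inner half-flip. Since $G_1$ is discrete, $\mathcal R_{\omega,\delta}=\mathcal R_\omega$. Because $\delta$ is outer, the induced action $\delta_\omega$ on $\mathcal R_\omega$ is free --- this is the content of \cite[Lemmas 5.6 and 5.7]{Ocneanu85} --- so in particular the normal subgroup of $g\in G_1$ with $\delta_{\omega,g}=\operatorname{id}$ is trivial. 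Thus the required embedding is exactly Lemma~\ref{lem:approx-central-embeddings} applied with $M=\mathcal R$, $\alpha=\delta$ and $\delta$ itself as the model action (so $G_0=G_1$, which is amenable).

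The substantive point is the approximately inner half-flip of $\delta$, which I would establish by adapting the Rokhlin techniques from the proof of Lemma~\ref{lem:approx-central-embeddings}. The action $\delta\otimes\delta$ of the amenable group $G_1$ on $\mathcal R\bar\otimes\mathcal R\cong\mathcal R$ is again outer --- indeed $(\delta\otimes\delta)_\omega$ is free, by applying the previous observation to $\delta_{\omega,g}\otimes 1$ --- while the flip automorphism of $\mathcal R\bar\otimes\mathcal R$ is approximately inner by finite-dimensional approximation. Combining these, for an $(S,\e)$-invariant $F\ssubset G_1$ I would take a Rokhlin partition of unity $\{E_g\}_{g\in F}$ for $\delta\otimes\delta$ from Masuda's noncommutative Rokhlin theorem \cite[Theorem 4.8]{Masuda13} (admissible by \cite[Lemmas 5.6 and 5.7]{Ocneanu85}) together with an implementer $v$ of the half-flip, and form a unitary of the shape $w=\sum_{g\in F}(\delta\otimes\delta)_g(v)E_g$. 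Using that the Rokhlin projections commute with $\mathcal R\bar\otimes\mathcal R$ and that $(\delta\otimes\delta)_g(v)$ carries $\delta_{g^{-1}}(x)\otimes 1$ to $1\otimes x$, one gets $w(x\otimes 1)w^*\approx 1\otimes x$, while the approximate $(\delta\otimes\delta)$-equivariance of $\{E_g\}$ makes $w$ asymptotically $(\delta\otimes\delta)$-invariant up to the F{\o}lner defect of $F$. Letting $\e\downarrow 0$, $S\nearrow G_1$ and applying the standard reindexing argument produces a net of unitaries in $\mathcal R\bar\otimes\mathcal R$ witnessing $\operatorname{id}_\mathcal R\otimes 1_\mathcal R\approx_{\mathrm u}1_\mathcal R\otimes\operatorname{id}_\mathcal R$. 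Hence $\delta$ has approximately inner half-flip and is strongly self-absorbing by Proposition~\ref{theorem:equivalence_ssa}.

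For the uniqueness clause, let $\delta'\colon G_1\acts\mathcal R$ be another outer action; it too is strongly self-absorbing by the above. Lemma~\ref{lem:approx-central-embeddings} (with $M=\mathcal R$, $\alpha=\delta$ and model $\delta'$; again the relevant normal subgroup is trivial) furnishes a unital equivariant $(\mathcal R,\delta')\to(\mathcal R_\omega,\delta_\omega)$, so Corollary~\ref{cor:equivalence_equivariant_McDuff} applied with the strongly self-absorbing action $\delta'$ yields $\delta\simeq_{\mathrm{cc}}\delta\otimes\delta'$, and symmetrically $\delta'\simeq_{\mathrm{cc}}\delta'\otimes\delta$. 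The equivariant flip of $\mathcal R\bar\otimes\mathcal R$ is a cocycle conjugacy from $\delta\otimes\delta'$ to $\delta'\otimes\delta$, so chaining the three relations gives $\delta\simeq_{\mathrm{cc}}\delta'$. This completes (i).

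For (ii): by (i), $\delta$ is strongly self-absorbing over $G_1$, and pulling the witnessing isomorphism, cocycle and unitaries back along $\pi$ shows that $\delta_\pi=\delta\circ\pi$ is strongly self-absorbing over $G$. Since $G$ is discrete, $M_{\omega,\alpha}=M_\omega$, so by Corollary~\ref{cor:equivalence_equivariant_McDuff} it remains to exhibit a unital equivariant $*$-homomorphism $(\mathcal R,\delta_\pi)\to(M_\omega,\alpha_\omega)$. As $H$ contains every $g$ with $\alpha_{\omega,g}$ trivial, $\delta_\pi$ is induced from an action of the quotient $G/\ker(\alpha_\omega)$, so Lemma~\ref{lem:approx-central-embeddings} applied to $\alpha$ with this induced model provides exactly such a homomorphism, whence $\alpha\simeq_{\mathrm{cc}}\alpha\otimes\delta_\pi$. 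The only step demanding genuine work is the approximately inner half-flip of the outer action --- everything else is formal bookkeeping with Proposition~\ref{theorem:equivalence_ssa}, Corollary~\ref{cor:equivalence_equivariant_McDuff} and the Rokhlin-theoretic Lemma~\ref{lem:approx-central-embeddings} --- and within that step the delicate points are choosing the right ambient ultrapower for the half-flip implementer $v$, arranging that the Rokhlin partition commutes with it, and controlling the F{\o}lner-boundary terms through the reindexing.
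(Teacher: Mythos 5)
Your proposal follows essentially the same route as the paper's proof: Masuda's noncommutative Rokhlin theorem is applied to the outer action $\delta\otimes\delta$ on $(\mathcal R\bar{\otimes}\mathcal R)_\omega$ to average the translates $(\delta\otimes\delta)_g(v)$ of a flip implementer against the Rokhlin partition, yielding an asymptotically invariant implementer and hence approximately inner half-flip, after which Lemma~\ref{lem:approx-central-embeddings}, Theorem~\ref{theorem:equivalence_ssa} and Corollary~\ref{cor:equivalence_equivariant_McDuff} deliver strong self-absorption, uniqueness via mutual absorption, and part (ii) exactly as in the paper. The only quibble is your parenthetical that $(\delta\otimes\delta)_g(v)$ carries $\delta_{g^{-1}}(x)\otimes 1$ to $1\otimes x$: since the flip commutes with $\delta\otimes\delta$, each translate $(\delta\otimes\delta)_g(v)$ still carries $x\otimes 1$ to $1\otimes x$ on the nose, which is precisely what makes $w=\sum_{g\in F}(\delta\otimes\delta)_g(v)E_g$ implement the flip exactly rather than only approximately.
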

\begin{proof}
\ref{theorem:model-absorption:1}:
Let $\tau$ be the unique tracial state on $\mathcal R$, which we may use to define the 1-norm $\|\cdot\|_1=\tau(|\cdot|)$ on $\mathcal R$.
Set $\delta^{(2)}=\delta\otimes\delta: G_1\curvearrowright \mathcal R\bar{\otimes}\mathcal R=:\mathcal R^{(2)}$, which is also an outer action.
Since the flip automorphism $\sigma$ on $\mathcal R^{(2)}$ is known to be approximately inner, we may pick a unitary $U\in\mathcal{U}(\mathcal R^{(2)\omega})$ with $UxU^*=\sigma(x)$ for all $x\in\mathcal R^{(2)}$.

By \cite[Theorem 3.2]{Connes77}, the induced action $\delta^{(2)\omega}: G_1\curvearrowright\mathcal R^{(2)}_\omega$ is faithful.
We may hence argue exactly as in the proof of Lemma~\ref{lem:approx-central-embeddings} and apply Masuda's noncommutative Rokhlin lemma.
So let $S\ssubset G_1$ be a symmetric finite set and $\e>0$.
If $F \ssubset G_1$ is a finite $(S,\e)$-invariant subset, then there exists a partition of unity of projections $\{E_g\}_{g \in F} \subset \mathcal R^{(2)}_\omega$ such that
\begin{align}
\sum_{g \in s^{-1}F \cap F} \|\delta^{(2)\omega}_{s}(E_g) - E_{sg}\|_1 &< 4\e^{1/2} \text{ for all } s \in S; \label{eq:r-lemma1-}\\
\sum_{g \in F\setminus s^{-1}F} \|E_g\|_1 &< 3\e^{1/2} \text{ for all } s \in S;\label{eq:r-lemma2-}\\
[E_g, x] &= 0 \text{ for all } g \in F,\ x\in\{ \delta^{(2)\omega}_h(U)\}_{h\in G_1}.\label{eq:commutation_image-}
\end{align}
Define $W = \sum_{g \in F} \delta^{(2)\omega}_{g}(U) E_g$.
This is also a unitary in $\mathcal R^{(2)\omega}$ implementing the flip $\sigma$ because the projections $E_g$ form a partition of unity and condition \eqref{eq:commutation_image-}.
For $s \in S$ we use conditions \eqref{eq:r-lemma1-} and \eqref{eq:r-lemma2-} to observe
\begin{align*}
\|\delta^{(2)\omega}_{s}(W) - W\|_1 &=  \Big\| \sum_{g \in F} \delta^{(2)\omega}_{sg}(U) \delta^{(2)\omega}_{s}(E_g) - \sum_{g \in s^{-1}F} \delta^{(2)\omega}_{sg}(U) E_{sg} \Big\|_1
\\
&\leq \sum_{g \in F \cap s^{-1}F}  \left\| \delta^{(2)\omega}_{sg}(U) ( \delta^{(2)\omega}_{s}(E_g) - E_{sg}) \right\|_1\\ & \qquad + \sum_{g \in F\setminus s^{-1}F} \| \delta^{(2)\omega}_{g}(U) E_g\|_1 + \sum_{g \in F \setminus sF} \|\delta^{(2)\omega}_{g}(U) E_{g}\|_1 \\
&< 10\e^{1/2}.
\end{align*}
Since we can do this for arbitrary $\e >0$ and $S \ssubset G_1$, we can use a reindexing trick to obtain a unitary $W\in\mathcal{U}((\mathcal R^{(2)\omega})^{\delta^{(2)\omega}})$ with $WxW^*=\sigma(x)$ for all $x\in\mathcal R^{(2)}$.
In particular, $\delta$ has approximately inner half-flip.
If we apply Lemma~\ref{lem:approx-central-embeddings} for $G=G_1$, $N=\{1\}$ and $\delta$ in place of $\alpha$, it follows with Theorem~\ref{theorem:equivalence_ssa} that $\delta$ is strongly self-absorbing.
If $\gamma\colon G_1\curvearrowright\mathcal R$ is another outer action, then the same follows for $\gamma$.
By applying Lemma~\ref{lem:approx-central-embeddings} and Corollary~\ref{cor:equivalence_equivariant_McDuff} twice, we obtain that $\gamma$ and $\delta$ absorb each other, hence they are cocycle conjugate.

\ref{theorem:model-absorption:2}:
Define $N$ to be the subgroup of all elements $g\in G$ such that $\alpha_{\omega,g}$ is trivial, and set $G_0=G/N$ with quotient map $\pi^0: G\to G_0$.
By assumption we have $N\subseteq H$, hence $G_1$ can be viewed as a quotient of $G_0$ via a map $\pi^{0\to 1}: G_0\to G_1$.
Then $\pi=\pi^{0\to 1}\circ\pi^0$ and the action $\delta_{\pi^{0\to 1}}:=\delta\circ\pi^{0\to 1}$ is a $G_0$-action with $(\delta_{\pi^{0\to 1}})_{\pi^0}=\delta_\pi$.
By Lemma~\ref{lem:approx-central-embeddings}, it follows that there exists an equivariant unital $*$-homomorphism $(\mathcal R,\delta_\pi) \to (M_\omega,\alpha_\omega)$.
Since $\delta$ was strongly self-absorbing, so is $\delta_\pi$ as a $G$-action and the claim follows by Corollary~\ref{cor:equivalence_equivariant_McDuff}.
\end{proof}

\begin{remark}
We note that the factor $M$ in Theorem~\ref{theorem:model-absorption} is only assumed to be semi-finite because at this point in the article, we have only proved the absorption theorem in this setting.
Upon having access to Theorem~\ref{thm:general-McDuff}, the same proof verbatim allows one to remove the assumption that $M$ needs to be semi-finite.
\end{remark}

\section{Actions of discrete amenable groupoids}

We begin by recalling the definition of a discrete measured groupoid. This concept dates back to \cite{Mackey63}. 

\begin{definition}A discrete measured groupoid $\mathcal{G}$ is a groupoid in the usual sense that carries the following additional structure:
\begin{enumerate}[label=$\bullet$,leftmargin=*]
\item The groupoid $\mathcal{G}$ is a standard Borel space and the units $\mathcal{G}^{(0)} \subset \mathcal{G}$ form a Borel subset.
\item The source and target maps $s,t\colon \mathcal{G} \rightarrow \mathcal{G}^{(0)}$ are Borel and countable-to-one.
\item Define $\mathcal{G}^{(2)}:= \{(g,h) \in \mathcal{G} \times \mathcal{G} \mid s(g) = t(h)\}.$ The multiplication map $\mathcal{G}^{(2)} \rightarrow \mathcal{G}\colon (g,h) \mapsto gh$ and the inverse map $\mathcal{G}\rightarrow \mathcal{G}\colon g \mapsto g^{-1}$ are Borel.
\item $\mathcal{G}^{(0)}$ is equipped with a measure $\mu$ satisfying the following property.
Let $\mu_s$ and $\mu_t$ denote the $\sigma$-finite measures on $\mathcal{G}$ obtained by integrating the counting measure over $s,t\colon \mathcal{G}\rightarrow \mathcal{G}^{(0)}$, respectively. Then $\mu_s \sim \mu_t$. 
\end{enumerate}
\end{definition}
\begin{example}
An important example of a discrete measured groupoid is the \emph{transformation groupoid} associated to a non-singular action $G \acts (X, \mu)$ of a countable, discrete group $G$ on a standard measure space $(X, \mu)$. In that case the unit space can be identified with $X$ and the measure $\mu$ satisfies the necessary requirements. We denote this transformation groupoid by $G \ltimes X$. 
\end{example}
We assume the reader is familiar with the concept of amenability for discrete measured groupoids, see \cite[Definition 3.2.8]{AnantharamanRenault00}. In particular, recall that a groupoid $\mathcal{G}$ is amenable if and only if the associated equivalence relation
\[
\big\{ \big( s(g),t(g) \big) \mid g \in \mathcal{G} \big\}
\]
and almost all associated isotropy groups
\[\{g \in \mathcal{G} \mid s(g) = t(g) = x\} \quad \text{for } x \in \mathcal{G}^{(0)}\]
are amenable (see e.g.\ \cite[Corollary 5.3.33]{AnantharamanRenault00}). 
In case of a non-singular action $G \acts (X, \mu)$, the associated transformation groupoid $G \ltimes X$ is amenable if and only if the action is amenable in the sense of Zimmer (\cite{Zimmer78, Zimmer77, Zimmer77b}).

\begin{remark}
In this paper we work with measurable fields of all kinds of separable structures, such as Polish spaces, Polish groups, von Neumann algebras with separable predual, and fields that can be derived from these.
For Polish groups the definition is explicitly given in \cite{Sutherland85}, while the other notions can be defined in an analogous way.
We only consider the measurable setting and hence will often implicitly discard sets of measure zero whenever needed.
This means all measurable fields, groupoids and isomorphisms between measure spaces are defined up to sets of measure zero.
Because of this, all statements should be interpreted as holding only almost everywhere whenever appropriate.
This also means that we have no problem to apply the von Neumann measurable selection theorem (see e.g.\ \cite[Theorem 18.1]{Kechris95}) to obtain measurable sections after deletion of a suitable null set, and we will often omit the fine details related to such arguments.  
\end{remark}

\begin{definition}
Let $\mathcal{G}$ be a discrete measured groupoid with unit space $(X,\mu)$. An \emph{action} $\alpha$ of $\mathcal{G}$ on a measurable field $(B_x)_{x \in X}$ of factors with separable predual is given by a measurable field of $*$-isomorphisms \[\mathcal{G} \ni g \mapsto \alpha_g\colon B_{s(g)} \rightarrow B_{t(g)},\] 
satisfying $\alpha_g \circ \alpha_h = \alpha_{gh}$ for all $(g,h) \in \mathcal{G}^{(2)}$.
\end{definition}

\begin{definition}\label{def:cc_groupoid_actions}
Let $\mathcal{G}$ be a discrete measured groupoid with unit space $(X,\mu)$. Suppose that $\alpha$ and $\beta$ are actions of $\mathcal{G}$ on the measurable fields of factors with separable predual $(B_x)_{x \in X}$ and $(D_x)_{x \in X}$, respectively. The actions are said to be \emph{cocycle conjugate} if there exists a measurable field of $*$-isomorphisms $X \ni x \mapsto \theta_x\colon B_x \rightarrow D_x$ and a measurable field of unitaries $\mathcal{G} \ni g \mapsto w_g \in \mathcal{U}(D_{t(g)})$ satisfying  
\begin{align*}
\theta_{t(g)} \circ \alpha_g \circ \theta_{s(g)}^{-1} &= \mathrm{Ad} w_g \circ \beta_g \text{ for all } g \in \mathcal{G}\\
  w_g \beta_g(w_h) &= w_{gh} \text{ for all } (g,h) \in \mathcal{G}^{(2)}.
\end{align*}
\end{definition}

\begin{example} \label{ex:central_decomposition}
Let $B$ be a von Neumann algebra acting on a separable Hilbert space $\mathcal{H}$. Then we can centrally decompose $B$ as 
\[ (B, \mathcal{H}) = \int_{X}^\oplus (B_x, \mathcal{H}_x)\, d\mu(x), \]
where $(X,\mu)$ is a standard probability space such that $L^\infty(X,\mu) \cong \mathcal{Z}(B)$ (see e.g.\ \cite[Theorem IV.8.21]{Takesaki02}).
In this way we get a measurable field of factors $(B_x)_{x \in X}$. When $B$ is of type I, II$_1$, II$_\infty$ or III, every $B_x$ has the same type by \cite[Corollary V.6.7]{Takesaki02}.
We claim that if $B \cong B \bar{\otimes} \mathcal{R}$, then every fibre $B_x$ is McDuff.
Pick a $*$-isomorphism $\Phi\colon B \rightarrow B \bar{\otimes} \mathcal{R}$.
Then there exists (see for example \cite[Theorem III.2.2.8]{Blackadar}) a unitary $U\colon \mathcal{H} \otimes \ell^2(\N) \rightarrow \mathcal{H} \otimes L^2(\mathcal{R}, \tau_\mathcal{R}) \otimes \ell^2(\N)$ such that the amplification of $\Phi$ is spatial, i.e.\ $\Phi(b) \otimes 1 = U(x \otimes 1)U^*$.
We have the decompositions
\[(B \otimes \C, \mathcal{H} \otimes \ell^2(\N)) =\int_X^\oplus (B_x \otimes \C, \mathcal{H}_x \otimes \ell^2(\N)) \, d \mu(x),\text{ and}\]
\[(B \bar{\otimes} \mathcal{R} \otimes \C,\mathcal{H} \otimes L^2(\mathcal{R}, \tau_\mathcal{R}) \otimes \ell^2(\N))  = \int_X^\oplus \left(B_x \bar{\otimes}\mathcal{R} \otimes \C, \mathcal{H}_x \otimes L^2(\mathcal{R}, \tau_\mathcal{R}) \otimes \ell^2(\N)\right) \, d\mu(x).\]
As the amplification of $\Phi$ necessarily maps the diagonal algebras (i.e.\ the respective centers) to each other, we can use the fact that the disintegration is unique \cite[Theorem 8.23]{Takesaki02}. In particular, this means every $B_x$ is isomorphic to some $B_y \bar{\otimes} \mathcal{R}$ and hence, $B_x \cong B_x \bar{\otimes} \mathcal{R}$. 

Now suppose $\alpha\colon G \acts B$ is an action of a countable discrete group. 
Let ${\mathcal{G} = G \ltimes X}$ denote the transformation groupoid associated to the action on $(X, \mu)$ induced by $\alpha$.
 Then $\alpha$ can be disintegrated as an action $\bar{\alpha}$ of $\mathcal{G}$ on the measurable field $(B_x)_{x \in X}$ (see e.g.\ \cite[Corollary X.3.12]{Takesaki02}\footnote{When the field of factors $(B_x)_{x \in X}$ is constant (for example when $B$ is injective type II$_1$ and all $B_x$ are $\mathcal{R}$), this construction dates back to \cite{SutherlandTakesaki89}. 
 There, the groupoid $\mathcal{G}$ and action $\bar{\alpha}$ are also called the \emph{ancillary groupoid} and \emph{ancillary action} associated to $\alpha$. }) such that given $b= \int_x^\oplus b_x\,  d \mu(x)$, we have
\[\alpha_g(b)_{g \cdot x} = \bar{\alpha}_{(g,x)}(b_{x}) \text{ for } (g,x) \in \mathcal{G}.\]

Assume $\beta\colon G \acts D$ is another action on a separably acting von Neumann algebra $(D, \mathcal{K}) = \int_X^\oplus (D_x, \mathcal{K}_x)\, d\mu(x)$, and assume that $\beta$ induces the same action on $(X,\mu)$ as $\alpha$. 
Let $\bar{\beta}$ denote its decomposition as an action of $\mathcal{G}$ on $(D_x)_{x \in X}$. 
If $\bar{\alpha}$ and $\bar{\beta}$ are cocycle conjugate in the sense of Definition~\ref{def:cc_groupoid_actions}, then $\alpha$ and $\beta$ are cocycle conjugate as actions on von Neumann algebras.
Indeed, let $X \ni x \mapsto \theta_x\colon A_x \rightarrow B_x$ and $\mathcal{G} \ni (g,x) \mapsto w_{(g,x)} \in \mathcal{U}(B_{g \cdot x})$ denote the measurable fields of $*$-isomorphisms and unitaries realizing a cocycle conjugacy between $\bar{\alpha}$ and $\bar{\beta}$.
This gives rise to a $*$-isomorphism $\theta\colon A \rightarrow B$ given by $\theta(a)_x = \theta_x(a_x)$ for $a = \int_X^\oplus a_x\, \mathrm{d} \mu(x) \in A$, and for each $g \in G$ we get a unitary $\mathbbm{v}_g \in \mathcal{U}(B)$ by
$(\mathbbm{v}_g)_x = w_{(g, g^{-1}\cdot x)}$. The pair $(\theta, \mathbbm{v})$ is a cocycle conjugacy.

Conversely, one can show that every cocycle conjugacy $(\theta, \mathbbm{v}):(B, \alpha) \rightarrow (D, \beta)$ with $\theta\big\lvert_{L^\infty(X)} =  \mathrm{id}\big\lvert_{L^\infty(X)}$ gives rise to a cocycle conjugacy in the sense of Definition~\ref{def:cc_groupoid_actions}.
\end{example}

We will subsequently need the following lemma (albeit only for discrete groups) about strongly self-absorbing actions.

\begin{lemma} \label{lem:special-cc}
Let $G_j$, $j=1,2$, be two second-countable locally compact groups with a continuous group isomorphism $\phi: G_1\to G_2$.
Let $\delta^{(j)}\colon G_j\curvearrowright\mathcal R$, $j=1,2$, be two strongly self-absorbing actions and choose a cocycle conjugacy $(\Phi,\mathbb{U})\colon (\mathcal R,\delta^{(2)})\to (\mathcal R\bar{\otimes}\mathcal R,\delta^{(2)}\otimes\delta^{(2)})$ that is approximately unitarily equivalent to $\operatorname{id}_{\mathcal R}\otimes 1_{\mathcal R}$.
(Note that $\phi$ allows us to identify $(\Phi,\mathbb{U}\circ\phi)$ with a cocycle conjugacy between the $G_1$-action $\delta^{(2)}\circ\phi$ and its tensor square.)
Let $\alpha^{(j)}\colon  G_j\curvearrowright M_j$, $j=1,2$, be two actions on separably acting von Neumann algebras.
Given a cocycle conjugacy
\[
(\theta,\mathbbm{v})\colon (M_1,\alpha^{(1)})\to \big( M_2\bar{\otimes}\mathcal{R}, (\alpha^{(2)}\otimes\delta^{(2)})\circ\phi \big),
\]
and a conjugacy $\Delta\colon (\mathcal R, \delta^{(2)} \circ \phi)\to(\mathcal R,\delta^{(1)})$,
consider the cocycle conjugacy of $G_1$-actions
\[
(\Psi,\mathbb{V})= \big( (\theta, \mathbbm{v})^{-1} \otimes \Delta \big) \circ \big( \mathrm{id}_{M_2} \otimes (\Phi, \mathbbm{U}\circ\phi) \big) \circ (\theta, \mathbbm{v})
\]
between $(M_1,\alpha^{(1)})$ and $(M_1\bar{\otimes}\mathcal R,\alpha^{(1)}\otimes \delta^{(1)} )$.
Then there exists a sequence of unitaries $y_n\in\mathcal{U}(M_1\otimes\mathcal R)$ such that
\[
\operatorname{Ad}(y_n)\circ (\operatorname{id}_{M_1}\otimes 1_{\mathcal R})\to \Psi,\quad \operatorname{Ad}(y_n^*)\circ\Psi \to \operatorname{id}_{M_1}\otimes 1_{\mathcal R}
\]
point-strongly, and such that
\[
y_n(\alpha^{(1)}\otimes\delta^{(1)})_g(y_n)^* \to \mathbb{V}_g,\quad y_n^*\mathbb{V}_g(\alpha^{(1)}\otimes\delta^{(1)})_g(y_n)\to 1_{M_1\otimes\mathcal R}
\]
in the strong-$*$ operator topology for all $g\in G$ and uniformly over compact sets.
\end{lemma}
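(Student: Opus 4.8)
The plan is to transport the approximate unitary equivalence $(\Phi,\mathbb{U})\approx_{\mathrm{u}}\mathrm{id}_{\mathcal{R}}\otimes 1_{\mathcal{R}}$ through the cocycle conjugacies that define $(\Psi,\mathbb{V})$. Set
\[
\Lambda:=\mathrm{id}_{M_2}\otimes(\Phi,\mathbb{U}\circ\phi)\colon \bigl(M_2\bar{\otimes}\mathcal{R},(\alpha^{(2)}\otimes\delta^{(2)})\circ\phi\bigr)\longrightarrow \bigl(M_2\bar{\otimes}\mathcal{R}\bar{\otimes}\mathcal{R},(\alpha^{(2)}\otimes\delta^{(2)}\otimes\delta^{(2)})\circ\phi\bigr).
\]
Unwinding the definition of $(\Psi,\mathbb{V})$ with the composition rules from Remark~\ref{rem:cocycle-category} and Example~\ref{ex:inner-cc}, one sees that if the source $M_1$ is identified with $M_2\bar{\otimes}\mathcal{R}$ via $(\theta,\mathbbm{v})$ and the target $M_1\bar{\otimes}\mathcal{R}$ is identified with $M_2\bar{\otimes}\mathcal{R}\bar{\otimes}\mathcal{R}$ via $(\theta,\mathbbm{v})\otimes\Delta^{-1}$, then $(\Psi,\mathbb{V})$ becomes $\Lambda$, while $\mathrm{id}_{M_1}\otimes 1_{\mathcal{R}}$ becomes $\mathrm{id}_{M_2\bar{\otimes}\mathcal{R}}\otimes 1_{\mathcal{R}}$ and an inner cocycle conjugacy $\mathrm{Ad}(\tilde{y})$ on $M_2\bar{\otimes}\mathcal{R}\bar{\otimes}\mathcal{R}$ becomes $\mathrm{Ad}\bigl((\theta^{-1}\otimes\Delta)(\tilde{y})\bigr)$. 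Because composing a cocycle morphism with a fixed cocycle conjugacy on either side preserves point-strong convergence of the (uniformly bounded, normal) underlying $*$-homomorphisms and preserves $*$-strong convergence uniform over compact sets of the cocycles --- a routine check using that normal $*$-isomorphisms are $*$-strongly continuous on bounded sets and that $g\mapsto(\text{a fixed cocycle})_g$ has $*$-strongly compact range over a compact set --- it suffices to produce unitaries $\tilde{y}_n\in\mathcal{U}(M_2\bar{\otimes}\mathcal{R}\bar{\otimes}\mathcal{R})$ with
\[
\mathrm{Ad}(\tilde{y}_n)\circ(\mathrm{id}_{M_2\bar{\otimes}\mathcal{R}}\otimes 1_{\mathcal{R}})\longrightarrow\Lambda,\qquad \mathrm{Ad}(\tilde{y}_n^*)\circ\Lambda\longrightarrow\mathrm{id}_{M_2\bar{\otimes}\mathcal{R}}\otimes 1_{\mathcal{R}}
\]
as cocycle morphisms --- that is, with underlying maps converging point-strongly and cocycles converging $*$-strongly uniformly over compacts --- and to then put $y_n:=(\theta^{-1}\otimes\Delta)(\tilde{y}_n)$. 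Here we use that a continuous group isomorphism of second-countable locally compact groups is automatically a homeomorphism, so that $\phi$ maps compact sets onto compact sets.

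\textbf{The model case.}
Spelling out $(\Phi,\mathbb{U})\approx_{\mathrm{u}}(\mathrm{id}_{\mathcal{R}}\otimes 1_{\mathcal{R}},1)$ according to Definition~\ref{def:approx-ue} and passing from the defining net to a sequence (legitimate since $\mathcal{R}\bar{\otimes}\mathcal{R}$ has separable predual), we obtain unitaries $w_n\in\mathcal{U}(\mathcal{R}\bar{\otimes}\mathcal{R})$ satisfying $\|w_n\Phi(x)w_n^*-x\otimes 1_{\mathcal{R}}\|_{\tau\otimes\tau}\to 0$ for all $x\in\mathcal{R}$ and $\max_{h\in K}\|w_n\mathbb{U}_h(\delta^{(2)}\otimes\delta^{(2)})_h(w_n^*)-1\|_{\tau\otimes\tau}\to 0$ for every compact $K\subseteq G_2$. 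I would take $\tilde{y}_n:=1_{M_2}\otimes w_n^*$. Then $\mathrm{Ad}(\tilde{y}_n)\circ(\mathrm{id}_{M_2\bar{\otimes}\mathcal{R}}\otimes 1_{\mathcal{R}})=\mathrm{id}_{M_2}\otimes\bigl(\mathrm{Ad}(w_n^*)\circ(\mathrm{id}_{\mathcal{R}}\otimes 1_{\mathcal{R}})\bigr)$ and $\mathrm{Ad}(\tilde{y}_n^*)\circ\Lambda=\mathrm{id}_{M_2}\otimes\bigl(\mathrm{Ad}(w_n)\circ\Phi\bigr)$, and the first convergence of $w_n$ (together with $\|\cdot\|_{\tau\otimes\tau}$-invariance of $\mathrm{Ad}$ of a unitary and the fact that $\|\cdot\|_{\tau\otimes\tau}$ metrizes the strong topology on the unit ball of $\mathcal{R}\bar{\otimes}\mathcal{R}$) gives $\mathrm{Ad}(w_n^*)\circ(\mathrm{id}_{\mathcal{R}}\otimes 1_{\mathcal{R}})\to\Phi$ and $\mathrm{Ad}(w_n)\circ\Phi\to\mathrm{id}_{\mathcal{R}}\otimes 1_{\mathcal{R}}$ point-strongly on $\mathcal{R}$. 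On the cocycle side, the cocycle of $\mathrm{Ad}(\tilde{y}_n)\circ(\mathrm{id}_{M_2\bar{\otimes}\mathcal{R}}\otimes 1_{\mathcal{R}})$ at $g\in G_1$ is $1_{M_2}\otimes\bigl(w_n^*(\delta^{(2)}\otimes\delta^{(2)})_{\phi(g)}(w_n)\bigr)$ and that of $\mathrm{Ad}(\tilde{y}_n^*)\circ\Lambda$ is $1_{M_2}\otimes\bigl(w_n\mathbb{U}_{\phi(g)}(\delta^{(2)}\otimes\delta^{(2)})_{\phi(g)}(w_n^*)\bigr)$, whereas the cocycles of $\Lambda$ and of $\mathrm{id}_{M_2\bar{\otimes}\mathcal{R}}\otimes 1_{\mathcal{R}}$ are $1_{M_2}\otimes\mathbb{U}_{\phi(g)}$ and $1$; invariance of $\|\cdot\|_{\tau\otimes\tau}$ under inner automorphisms, under left and right multiplication by unitaries, and under adjoints of unitaries turns the required convergence of these cocycles into rearrangements of the second convergence of $w_n$, and for unitaries $\|\cdot\|_{\tau\otimes\tau}$-convergence is equivalent to $*$-strong convergence.

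\textbf{Amplification and conclusion.}
The one genuinely analytic step is to deduce, from $\vartheta_n\to\vartheta$ point-strongly for normal $*$-homomorphisms $\vartheta_n,\vartheta\colon\mathcal{R}\to\mathcal{R}\bar{\otimes}\mathcal{R}$, that $\mathrm{id}_{M_2}\otimes\vartheta_n\to\mathrm{id}_{M_2}\otimes\vartheta$ point-strongly on $M_2\bar{\otimes}\mathcal{R}$; this applies to the four \emph{trace-preserving} $*$-homomorphisms $\Phi$, $\mathrm{id}_{\mathcal{R}}\otimes 1_{\mathcal{R}}$, $\mathrm{Ad}(w_n^*)\circ(\mathrm{id}_{\mathcal{R}}\otimes 1_{\mathcal{R}})$ and $\mathrm{Ad}(w_n)\circ\Phi$. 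Fixing a faithful normal state $\phi_2$ on the $\sigma$-finite von Neumann algebra $M_2$, the amplification $\mathrm{id}_{M_2}\otimes\vartheta$ of a trace-preserving $\vartheta$ is isometric from $(M_2\bar{\otimes}\mathcal{R},\|\cdot\|^\sharp_{\phi_2\otimes\tau})$ into $(M_2\bar{\otimes}\mathcal{R}\bar{\otimes}\mathcal{R},\|\cdot\|^\sharp_{\phi_2\otimes\tau\otimes\tau})$, so an $\e/3$-estimate that approximates a general element of $M_2\bar{\otimes}\mathcal{R}$ in $\|\cdot\|^\sharp_{\phi_2\otimes\tau}$ by a finite linear combination of elementary tensors promotes the point-strong convergence on $\mathcal{R}$ to point-strong convergence of the amplifications. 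Combined with the previous paragraph this settles the model case, and transporting it through the reduction step yields the four assertions of the lemma: the two convergences of the underlying maps come from the convergence of the conjugated $*$-homomorphisms, and the two convergences of cocycles from the convergence of the conjugated cocycles --- bearing in mind that the cocycle of $\mathrm{Ad}(y_n)\circ(\mathrm{id}_{M_1}\otimes 1_{\mathcal{R}})$ is $g\mapsto y_n(\alpha^{(1)}\otimes\delta^{(1)})_g(y_n)^*$ and that of $\mathrm{Ad}(y_n^*)\circ(\Psi,\mathbb{V})$ is $g\mapsto y_n^*\mathbb{V}_g(\alpha^{(1)}\otimes\delta^{(1)})_g(y_n)$.

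\textbf{Main difficulty.}
I expect the real work, and the only delicate part, to be the reduction step --- confirming that $(\Psi,\mathbb{V})$, $\mathrm{id}_{M_1}\otimes 1_{\mathcal{R}}$ and $\mathrm{Ad}(y_n)\circ(\mathrm{id}_{M_1}\otimes 1_{\mathcal{R}})$ become $\Lambda$, $\mathrm{id}_{M_2\bar{\otimes}\mathcal{R}}\otimes 1_{\mathcal{R}}$ and $\mathrm{Ad}(\tilde{y}_n)\circ(\mathrm{id}_{M_2\bar{\otimes}\mathcal{R}}\otimes 1_{\mathcal{R}})$ under a single pair of identifications, keeping careful track of how the cocycles $\mathbbm{v}$ and $\mathbb{U}\circ\phi$ and the conjugacy $\Delta$ are distributed among the three tensor legs, and of the fact that these identifications preserve the two relevant modes of convergence. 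The analytic input in the remaining steps is comparatively soft.
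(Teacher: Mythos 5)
Your proposal is correct and follows essentially the same route as the paper: your final unitaries $y_n=(\theta^{-1}\otimes\Delta)(1_{M_2}\otimes w_n^*)$ coincide with the paper's choice $(\theta^{-1}\otimes\Delta)(1_{M_2}\otimes z_n)$, where $z_n$ witnesses the approximate unitary equivalence of $(\Phi,\mathbb{U})$ with $\operatorname{id}_{\mathcal R}\otimes 1_{\mathcal R}$. The only difference is presentational --- you reduce to a model case on $M_2\bar{\otimes}\mathcal R\bar{\otimes}\mathcal R$ and transport back through the fixed cocycle conjugacies, whereas the paper expands the same products directly; your explicit treatment of the amplification $\operatorname{id}_{M_2}\otimes\vartheta_n\to\operatorname{id}_{M_2}\otimes\vartheta$ makes precise a step the paper leaves implicit.
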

\begin{proof}
By assumption, there is a sequence of unitaries $z_n\in\mathcal{U}(\mathcal{R}\bar{\otimes}\mathcal{R})$ such that 
\begin{equation} \label{eq:main-tech:ssa1}
\|\Phi(x) - z_n(x\otimes 1_{\mathcal R})z_n^*\|_2\to 0\quad\text{for all } x\in\mathcal R.
\end{equation}
and 
\begin{equation} \label{eq:main-tech:ssa2}
\max_{h\in K} \|\mathbb{U}_h - z_n(\delta^{(2)}\otimes\delta^{(2)})_h(z_n)^*\|_2\to 0 \quad\text{for every compact } K\subseteq G_2.
\end{equation}
By definition, we have
\[
\Psi = (\theta^{-1}\otimes\Delta)\circ (\operatorname{id}_{M_2}\otimes\Phi)\circ\theta
\]
and
\[
\mathbb{V}_g = ({\theta}^{-1}\otimes\Delta)\Big( (\operatorname{id}_{M_2}\otimes\Phi)(\mathbbm{v}_g) \cdot (1_{M_2}\otimes \mathbb{U}_{\phi(g)})\cdot (\mathbbm{v}_g^* \otimes 1_{\mathcal R}) \Big),\quad g\in G_1.
\]
If we consider the sequence of unitaries 
\[
y_n=(\theta^{-1}\otimes\Delta)(1_{M_2}\otimes z_n),
\]
then we can observe with \eqref{eq:main-tech:ssa1} that
\[
\operatorname{Ad}(y_n^*)\circ\Psi \to (\theta^{-1}\otimes\Delta)\circ (\operatorname{id}_{M_2}\otimes\operatorname{id}_{\mathcal R}\otimes 1_{\mathcal R})\circ\theta = \operatorname{id}_{M_1}\otimes 1_{\mathcal R}
\]
as well as
\[
\operatorname{Ad}(y_n)\circ(\operatorname{id}_{M_1}\otimes 1_{\mathcal R}) = \operatorname{Ad}(y_n)\circ(\theta^{-1}\otimes\Delta)\circ (\operatorname{id}_{M_2}\otimes\operatorname{id}_{\mathcal R}\otimes 1_{\mathcal R})\circ\theta \to \Psi
\]
point-strongly.
Moreover, given $g\in G_1$, the fact that $(\theta^{-1}, \big( \theta^{-1}(\mathbbm{v}^*_g) \big)_{g\in G_1} )$ is the inverse of $(\theta,\mathbbm{v})$ leads to the equation $\alpha^{(1)}_g\circ\theta^{-1}=\theta^{-1}\circ\operatorname{Ad}(\mathbbm{v}_g)\circ(\alpha^{(2)}\otimes\delta^{(2)})_{\phi(g)}$.
If we combine this with \eqref{eq:main-tech:ssa1} and \eqref{eq:main-tech:ssa2}, we can see that
\[
\begin{array}{cl}
\multicolumn{2}{l}{ y_n^*\mathbb{V}_g(\alpha^{(1)}\otimes\delta^{(1)})_g(y_n) } \\
=& y_n^*\mathbb{V}_g\cdot (\theta^{-1}\otimes\Delta)\big( \operatorname{Ad}(\mathbbm{v}_g\otimes 1_{\mathcal R})(1_{M_2}\otimes(\delta^{(2)}\otimes\delta^{(2)})_{\phi(g)}(z_n)) \big)  \\
=& (\theta^{-1}\otimes\Delta)\Big( (1_{M_2}\otimes z_n)^*\cdot (\operatorname{id}_{M_2}\otimes\Phi)(\mathbbm{v}_g) \cdot (1_{M_2}\otimes \mathbb{U}_{\phi(g)}) \cdots \\
& \phantom{ (\theta^{-1}\otimes\operatorname{id}_{\mathcal{R}})\Big( }\cdots (1_{M_2}\otimes(\delta^{(2)}\otimes\delta^{(2)})_{\phi(g)}(z_n))\cdot (\mathbbm{v}_g^* \otimes 1_{\mathcal R}) \Big) \\
=& (\theta^{-1}\otimes\Delta)\Big( \underbrace{ \big( \operatorname{id}_{M_2}\otimes (  \operatorname{Ad}(z_n^*)\circ\Phi ) \big) (\mathbbm{v}_g) }_{\to \mathbbm{v_g}\otimes 1_{\mathcal R} } 
\cdot \big( 1_{M_2}\otimes \underbrace{ z_n^*\mathbb{U}_{\phi(g)}(\delta^{(2)}\otimes\delta^{(2)})_{\phi(g)}(z_n) }_{\to 1_{\mathcal{R}\otimes\mathcal{R}} } \big)\cdot (\mathbbm{v}_g^* \otimes 1_{\mathcal R}) \Big) \\
\to & 1_{M_1\otimes\mathcal R} 
\end{array}
\]
in the strong-$*$ operator topology, uniformly over compact subsets.
Analogously, we observe the convergence
\[
\begin{array}{cl}
\multicolumn{2}{l}{ y_n(\alpha^{(1)}\otimes\delta^{(1)})_g(y_n)^* }\\
=& (\theta^{-1}\otimes\Delta)\Big( (1_{M_2}\otimes z_n) \cdot \operatorname{Ad}(\mathbbm{v}_g\otimes 1_{\mathcal R})(1_{M_2}\otimes(\delta^{(2)}\otimes\delta^{(2)})_{\phi(g)}(z_n^*) ) \Big) \\
=& (\theta^{-1}\otimes\Delta)\Big( \underbrace{ \operatorname{Ad}(1_{M_2}\otimes z_n)(\mathbbm{v}_g\otimes 1_{\mathcal R})}_{\to (\operatorname{id}_{M_2}\otimes\Phi)(\mathbbm{v}_g)}
 \cdot \big( 1_{M_2}\otimes \underbrace{ z_n(\delta^{(2)}\otimes\delta^{(2)})_{\phi(g)}(z_n^*) }_{\to \mathbb{U}_{\phi(g)} } \big) \cdot (\mathbbm{v}_g^*\otimes 1_{\mathcal R}) \Big) \\
\to & \mathbb{V}_g
\end{array}
\]
uniformly over compact sets.
This finishes the proof.
\end{proof}

In the proof of the main technical result of this section we will make use of the following variant, due to Popa--Shlyakhtenko--Vaes, of the cohomology lemmas in \cite[Appendix]{JonesTakesaki84} and \cite[Theorem 5.5]{Sutherland85}.

\begin{lemma}[{\cite[Theorem 3.5]{PopaShlyakhtenkoVaes20}}] \label{lemma:cohomology_lemma}
Let $\mathcal{S}$ be an amenable countable nonsingular equivalence relation on the standard probability space $(X,\mu)$.
Let $(G_x \acts P_x)_{x \in X}$ be a measurable field of continuous actions of Polish groups on Polish spaces, on which $\mathcal{S}$ is acting by conjugacies: we have measurable fields of group isomorphisms ${\mathcal{S} \ni (x,y) \mapsto \gamma_{(x,y)}\colon G_y \mapsto G_x}$ and homeomorphisms $\mathcal{S} \ni (x,y) \mapsto \beta_{(x,y)}\colon P_y \mapsto P_x$ satisfying 
\[
\gamma_{(x,y)} \circ \gamma_{(y,z)} = \gamma_{(x,z)}, \quad \beta_{(x,y)} \circ \beta_{(y,z)} = \beta_{(x,z)}, \quad \beta_{(x,y)} (g \cdot \pi) = \gamma_{(x,y)}(g) \cdot \beta_{(x,y)}(\pi)
\]
for all $(x,y), (y,z) \in \mathcal{S}$ and $g \in G_y, \pi \in P_y$.
Let $X \ni x \mapsto \sigma'(x) \in P_x$ be a measurable section. Assume that for all $(x,y) \in \mathcal{S}$, the element $\sigma'(x)$ belongs to the closure of ${G_x \cdot \beta_{(x,y)}(\sigma'(y))}$. 
Then there exists a measurable family $\mathcal{S} \ni (x,y) \mapsto v(x,y) \in G_x$ and a section $X \ni x \mapsto \sigma(x) \in P_x$ satisfying:
\begin{itemize}
\item $v$ is a 1-cocycle: $v(x,y) \gamma_{(x,y)}(v(y,z)) = v(x,z)$ for all $(x,y), (y,z)\in \mathcal{S}$;
\item $v(x,y)\cdot \beta_{(x,y)}(\sigma(y)) = \sigma(x)$ for all $(x,y) \in \mathcal{S}$.
\end{itemize}
\end{lemma}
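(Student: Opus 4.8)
We only sketch the argument; a complete proof is given in \cite[Theorem~3.5]{PopaShlyakhtenkoVaes20}. The statement is a cocycle-untwisting (``$1$-cohomology vanishing'') result for amenable, hence hyperfinite, equivalence relations, with coefficients in a measurable field of actions of Polish groups on Polish spaces. The plan is to exhaust $\mathcal{S}$ by an increasing sequence of Borel subequivalence relations with finite orbits and to build the $1$-cocycle $v$ and the section $\sigma$ by a telescoping induction over this exhaustion, using the closure hypothesis at each stage to carry out an approximate correction over the finitely many newly added pairs, with the errors chosen summably small so that the partial data converge.

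The first step is to reduce to the orbit-closure field: set $Y_x := \overline{G_x\cdot\sigma'(x)}\subseteq P_x$, which is a measurable field of Polish spaces. Applying the closure hypothesis to both $(x,y)$ and $(y,x)$, and using that $\beta_{(x,y)}$ is a homeomorphism intertwining the $G_y$- and $G_x$-actions through the isomorphism $\gamma_{(x,y)}$, one gets $\beta_{(x,y)}(Y_y)=Y_x$, that $\sigma'(x)$ and $\beta_{(x,y)}(\sigma'(y))$ both lie in $Y_x$, and that each of them has dense $G_x$-orbit in $Y_x$. Thus the whole problem takes place inside the field $(Y_x)_{x\in X}$, on which $\mathcal{S}$ acts by homeomorphisms, and this ``mutual density'' is what will make the correction step below well posed. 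Next, fix a Borel field of complete metrics $d_x\le 1$ on $Y_x$ and a Borel field of complete metrics on the $G_x$. Since $\mathcal{S}$ is amenable it is hyperfinite by Connes--Feldman--Weiss, so we may write $\mathcal{S}=\bigcup_{n\ge 0}\mathcal{S}_n$ with $\mathcal{S}_0=\{(x,x):x\in X\}$ and $\mathcal{S}_n\subseteq\mathcal{S}_{n+1}$ Borel subequivalence relations all of whose orbits are finite. Using the von Neumann selection theorem, fix for each $n$ a Borel ``$\mathcal{S}_n$-root'' map picking one point of each $\mathcal{S}_n$-orbit, and inside each $\mathcal{S}_{n+1}$-orbit single out one of these $\mathcal{S}_n$-roots as the \emph{master} root.

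The induction would produce, for every $n$, a Borel $1$-cocycle $v_n\colon\mathcal{S}_n\to\bigsqcup_x G_x$ with $v_n(x,y)\in G_x$ and a Borel section $x\mapsto\sigma_n(x)\in Y_x$ such that $v_n(x,y)\cdot\beta_{(x,y)}(\sigma_n(y))=\sigma_n(x)$ for $(x,y)\in\mathcal{S}_n$, such that $\sigma_n(x)$ still has dense $G_x$-orbit in $Y_x$ for a.e.\ $x$ (so the closure hypothesis persists for $\sigma_n$ on all of $\mathcal{S}$), and such that $v_{n+1}$ and $\sigma_{n+1}$ agree with $v_n$ and $\sigma_n$ up to $2^{-n}$ in $d$ and in the group metric. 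For the step from $n$ to $n+1$ one works inside a fixed $\mathcal{S}_{n+1}$-orbit, which is a finite disjoint union of $\mathcal{S}_n$-suborbits: keep $v_{n+1}$ and $\sigma_{n+1}$ unchanged on the master suborbit and treat the other suborbits one at a time. If $r$ is the master $\mathcal{S}_n$-root and $y$ the $\mathcal{S}_n$-root of another suborbit, then since $\sigma_n(r)\in\overline{G_r\cdot\beta_{(r,y)}(\sigma_n(y))}$ and, by mutual density, this closure equals all of $Y_r$, one may choose measurably an $a\in G_r$ with $a\cdot\beta_{(r,y)}(\sigma_n(y))$ so close to $\sigma_n(r)$ that propagating the corrected value $\beta_{(y,r)}(a^{-1}\cdot\sigma_n(r))$ over the finite suborbit via $v_n$ changes $\sigma_n$ there by at most $2^{-n}$; setting $v_{n+1}(r,y):=a$ and extending by the cocycle identity determines $v_{n+1}$, and propagating from the $\mathcal{S}_n$-roots determines $\sigma_{n+1}$. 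One then checks that the density/closure condition survives and that the estimates hold. Finally, completeness of the metrics gives Borel limits $v=\lim_n v_n$ (a $1$-cocycle on $\mathcal{S}=\bigcup_n\mathcal{S}_n$) and $\sigma=\lim_n\sigma_n$, and passing to the limit in $v_n\cdot\beta\sigma_n=\sigma_n$ yields $v(x,y)\cdot\beta_{(x,y)}(\sigma(y))=\sigma(x)$ for all $(x,y)\in\mathcal{S}$.

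The main obstacle, and the bulk of the technical work, is the correction step. One needs: (i) to choose the approximation tolerance for $a$ \emph{after} inspecting the moduli of continuity of the finitely many maps $\beta_{(x',y)}$ and $v_n(x',y)$-translations that govern how an adjustment at the root $y$ spreads over its suborbit --- since the $\beta$'s are only homeomorphisms and the $G_x$ need not be locally compact, this ``measurable-tolerance'' bookkeeping forces a nested application of Borel selection; (ii) to order the finitely many suborbit corrections within each $\mathcal{S}_{n+1}$-orbit so that a later correction cannot undo an earlier one; and (iii) to verify that after each correction the section value still has dense orbit (equivalently, that the closure hypothesis is genuinely inherited), which is precisely where the mutual-density observation from the first reduction is used. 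Carrying out all of this jointly Borel in $(x,y)$ and $n$ is the content that must be supplied.
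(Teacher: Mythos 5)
The paper does not prove this lemma at all: it is imported verbatim as an external result, namely \cite[Theorem 3.5]{PopaShlyakhtenkoVaes20}, and the authors only point out that it is a variant of the cohomology lemmas in the appendix of \cite{JonesTakesaki84} and in \cite[Theorem 5.5]{Sutherland85}. So there is no in-paper proof to compare your attempt against, and citing the source, as you do in your first sentence, is exactly what the paper itself does.

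As for the sketch you supply on top of the citation: the preparatory reductions are correct and worth having. In particular, the ``mutual density'' observation is right --- from the hypothesis applied to both $(x,y)$ and $(y,x)$ one does get $\beta_{(x,y)}(Y_y)=Y_x$ for $Y_x=\overline{G_x\cdot\sigma'(x)}$, and both $\sigma'(x)$ and $\beta_{(x,y)}(\sigma'(y))$ have dense $G_x$-orbit in $Y_x$; the appeal to Connes--Feldman--Weiss to exhaust $\mathcal{S}$ by finite Borel subequivalence relations is the classical route of Jones--Takesaki and Sutherland. However, the proposal is explicitly a proof outline and not a proof: the entire analytic core --- choosing the correction tolerances after inspecting the moduli of continuity of the finitely many homeomorphisms $\beta$ and group translations involved (neither $P_x$ nor $G_x$ carries any uniform structure you can exploit for free), doing this jointly Borel in $(x,y)$ and $n$, and verifying that the dense-orbit hypothesis survives each correction --- is named but not carried out, as you yourself state in the final paragraph. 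If the intent is to reprove the lemma rather than cite it, that deferred content is precisely where the difficulty lies and would have to be supplied; if the intent is to cite it, the sketch is a reasonable but inessential gloss.
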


\begin{remark} \label{rem:Polish-spaces}
Before we state and prove our main technical result in detail, we would like to outline for what kind of input data the lemma above will be used.
In the situation considered below, the typical Polish space $P$ will be the space of cocycle conjugacies $(M,\alpha)\to (N,\beta)$, where $\alpha: H\curvearrowright M$ and $\beta: H\curvearrowright N$ are actions of a countable discrete group $H$ on separably acting von Neumann algebras.
Here we consider the topology defined by declaring that one has a convergence of nets $(\theta^{(\lambda)},\mathbbm{v}^{(\lambda)})\to(\theta,\mathbbm{v})$ if and only if $\mathbbm{v}^{(\lambda)}_g\to\mathbbm{v}_g$ in the strong-$*$ operator topology for all $g\in H$, and $\|\varphi\circ\theta-\varphi\circ\theta^{(\lambda)}\|\to 0$ for all $\varphi\in N_*$.
This generalizes the well-known topology on the space of isomorphisms $M\to N$ that one usually called the ``$u$-topology''; cf.\ \cite{Haagerup75, Winslow98}.
The typical Polish group acting on this Polish space would be the unitary group $\mathcal{U}(N)$, which is equipped with the strong-$*$ operator topology, where the action is defined via composition with the inner cocycle conjugacy as per Example~\ref{ex:inner-cc} and Remark~\ref{rem:cocycle-category}.
In other words, a unitary $w\in\mathcal{U}(N)$ moves the cocycle conjugacy $(\theta,\mathbbm{v})$ to $\operatorname{Ad}(w)\circ(\theta,\mathbbm{v}) = \big( \operatorname{Ad}(w)\circ\theta, (w\mathbbm{v}\beta_g(w)^*)_{g\in H} \big)$.

If we assume in addition that $M$ and $N$ are semi-finite, we may pick a faithful normal semi-finite tracial weight $\tau$ on $N$.
Assume that $(\Psi,\mathbb{V})\in P$ is a cocycle conjugacy.
Then it follows from \cite[Proposition 3.7]{Haagerup75} that on the space of all isomorphisms $\Psi': M\to N$ with $\tau\circ\Psi'=\tau\circ\Psi$, the $u$-topology coincides with the topology of point-strong convergence.
As a direct consequence, we may conclude the following. 
If $(\Phi,\mathbb{U})\in P$ is another cocycle conjugacy and there exists a net of unitaries $w_\lambda\in\mathcal{U}(N)$ such that $w_\lambda\mathbb{V}_g\beta_g(w_\lambda)^*\to \mathbb{U}_g$ for all $g\in H$ in the strong-$*$ operator topology and $\operatorname{Ad}(w_\lambda)\circ\Psi\to\Phi$ point-strongly, then $(\Phi,\mathbb{U})\in\overline{ \mathcal{U}(N)\cdot (\Psi,\mathbb{V}) }$.
\end{remark}

The following can be seen as the main technical result of this section, which we previously referred to as a kind of measurable local-to-global principle.

\begin{theorem} \label{thm:main_technical}
Let $G \acts (X,\mu)$ be an amenable action (in the sense of Zimmer) of a countable, discrete group on a standard probability space.
Let $\alpha$ be an action of ${\mathcal{G}:= G \ltimes X}$ on a measurable field of semi-finite factors with separable predual $(B_x)_{x \in X}$. 
Denote by ${X \ni x \mapsto H_x}$ the measurable field of isotropy groups. 
For any action ${\delta\colon G \acts \mathcal{R}}$ on the hyperfinite II$_1$ factor, we define a tensor product action $\alpha \otimes \delta$ of $\mathcal{G}$ on the field of factors $(B_x \bar{\otimes} \mathcal{R})_{x \in X}$ by $(\alpha \otimes \delta)_{(g,x)} = \alpha_{(g,x)} \otimes \delta_g$.
If $\delta$ is strongly self-absorbing, then the following are equivalent:
 \begin{enumerate}[leftmargin=*,label=\textup{(\arabic*)}]
 \item $\alpha \simeq_{\mathrm{cc}} \alpha \otimes \delta$; \label{prop:main_technical:1}
 \item For almost every $x \in X$ we have $\alpha{|_{H_x}} \simeq_{\mathrm{cc}} (\alpha \otimes \delta){|_{H_x}}$ as actions of $H_x$ on $B_x$ and $B_x \bar{\otimes} \mathcal{R}$. \label{prop:main_technical:2}
 \end{enumerate}
\end{theorem}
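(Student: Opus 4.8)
The implication \ref{prop:main_technical:1}$\Rightarrow$\ref{prop:main_technical:2} is the easy one: if $\big((\theta_x)_{x\in X},(w_g)_{g\in\mathcal G}\big)$ realizes a cocycle conjugacy between the $\mathcal G$-actions $\alpha$ and $\alpha\otimes\delta$, then for almost every unit $x$ one restricts $g$ to the isotropy group $H_x$, whereupon the defining relations of Definition~\ref{def:cc_groupoid_actions} specialize to the cocycle conjugacy relations, so $\big(\theta_x,(w_g)_{g\in H_x}\big)$ witnesses $\alpha|_{H_x}\simeq_{\mathrm{cc}}(\alpha\otimes\delta)|_{H_x}$. The whole content is therefore the converse \ref{prop:main_technical:2}$\Rightarrow$\ref{prop:main_technical:1}, and the plan is: manufacture a measurable field of fibrewise cocycle conjugacies that is coherent along the orbit equivalence relation $\mathcal S$ of $\mathcal G$ --- the coherence being supplied by the cohomology Lemma~\ref{lemma:cohomology_lemma} --- and then assemble it, together with the fibrewise isotropy cocycles and a measurable section of $\mathcal G\to\mathcal S$, into a genuine cocycle conjugacy of $\mathcal G$-actions.

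First I would uniformize the fibrewise data. Since $G$ is discrete, each $H_x$ is a discrete subgroup of $G$ and $\delta|_{H_x}$ is again strongly self-absorbing (restrict the isomorphism and cocycle in the definition of $\delta$; uniformity over compacts of $G$ yields uniformity over the finite subsets of $H_x$). Likewise, fixing once and for all a cocycle conjugacy $(\Phi,\mathbb U)\colon(\mathcal R,\delta)\to(\mathcal R\bar\otimes\mathcal R,\delta\otimes\delta)$ that is approximately unitarily equivalent to $\operatorname{id}_{\mathcal R}\otimes 1_{\mathcal R}$ (Definition~\ref{def:strong-absorption}), its restriction $(\Phi,\mathbb U|_{H_x})$ is still approximately unitarily equivalent to $\operatorname{id}_{\mathcal R}\otimes 1_{\mathcal R}$ as a cocycle conjugacy for $\delta|_{H_x}$. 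Next, fix a measurable section $\mathcal S\ni(x,y)\mapsto\hat g_{(x,y)}\in\mathcal G$ of $g\mapsto(t(g),s(g))$, and for each $x$ let $P_x$ be the Polish space of cocycle conjugacies $(B_x,\alpha|_{H_x})\to(B_x\bar\otimes\mathcal R,(\alpha\otimes\delta)|_{H_x})$, topologized as in Remark~\ref{rem:Polish-spaces}; the Polish groups $G_x:=\mathcal U(B_x\bar\otimes\mathcal R)$ act on $P_x$ by $\operatorname{Ad}$, and $\mathcal S$ acts by conjugacies $\beta_{(x,y)}$ obtained by transporting along $\alpha_{\hat g_{(x,y)}}$ and $(\alpha\otimes\delta)_{\hat g_{(x,y)}}$ (together with the resulting identification $H_y\cong H_x$), compatibly with $\gamma_{(x,y)}=(\alpha\otimes\delta)_{\hat g_{(x,y)}}\colon G_y\to G_x$. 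By \ref{prop:main_technical:2} we have $P_x\neq\emptyset$ almost everywhere, so von Neumann selection gives a measurable section $x\mapsto\Theta'_x\in P_x$, and applying Lemma~\ref{lem:special-cc} fibrewise --- with $G_1=G_2=H_x$, $\phi=\operatorname{id}$, $\delta^{(j)}=\delta|_{H_x}$, $M_j=B_x$, $\alpha^{(j)}=\alpha|_{H_x}$, $\Delta=\operatorname{id}_{\mathcal R}$, input cocycle conjugacy $\Theta'_x$, and the fixed $(\Phi,\mathbb U|_{H_x})$ --- produces a measurable section $x\mapsto\sigma'(x)=(\Psi_x,\mathbb V_x)\in P_x$ together with unitaries $y^x_n\in G_x$ realizing $\operatorname{Ad}(y^x_n)\circ(\operatorname{id}_{B_x}\otimes 1_{\mathcal R})\to\Psi_x$ \emph{and} $\operatorname{Ad}((y^x_n)^*)\circ\Psi_x\to\operatorname{id}_{B_x}\otimes 1_{\mathcal R}$ point-strongly, along with the matching convergences of cocycles. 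Read inside the ambient Polish space of equivariant cocycle morphisms, this two-sided approximation says precisely that $\sigma'(x)$ lies in the $G_x$-orbit closure of the distinguished point $\sigma^{(0)}(x)$ corresponding to $(\operatorname{id}_{B_x}\otimes 1_{\mathcal R},\text{trivial cocycle})$, and conversely. Since $\delta_g(1_{\mathcal R})=1_{\mathcal R}$ one checks $\beta_{(x,y)}(\sigma^{(0)}(y))=\sigma^{(0)}(x)$, and since $\gamma_{(x,y)}$ is onto, combining the orbit-closure memberships at $x$ and at $y$ gives $\sigma'(x)\in\overline{G_x\cdot\beta_{(x,y)}(\sigma'(y))}$ for all $(x,y)\in\mathcal S$ --- exactly the closure hypothesis of Lemma~\ref{lemma:cohomology_lemma}. (Amenability of $\mathcal S$, also required by that lemma, is part of the Zimmer-amenability of $G\curvearrowright(X,\mu)$.)

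Lemma~\ref{lemma:cohomology_lemma} then yields a measurable $1$-cocycle $\mathcal S\ni(x,y)\mapsto v(x,y)\in G_x$ and a measurable section $x\mapsto\sigma(x)=(\theta_x,\mathbbm v^x)\in P_x$ with $v(x,y)\cdot\beta_{(x,y)}(\sigma(y))=\sigma(x)$; unwinding the definitions, this amounts to $\theta_x\circ\alpha_{\hat g_{(x,y)}}=\operatorname{Ad}(v(x,y))\circ(\alpha\otimes\delta)_{\hat g_{(x,y)}}\circ\theta_y$, while $\mathbbm v^x$ is the isotropy cocycle of $\theta_x$ at $x$. I would then assemble these into a cocycle conjugacy of $\mathcal G$-actions: writing $g\in\mathcal G$ uniquely as $g=\hat g_{(t(g),s(g))}\cdot h$ with $h\in H_{s(g)}$, set $W_g:=v(t(g),s(g))\cdot(\alpha\otimes\delta)_{\hat g_{(t(g),s(g))}}(\mathbbm v^{s(g)}_h)$. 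A direct computation, using the displayed relation, the isotropy cocycle identities for the $\mathbbm v^\bullet$, and the $1$-cocycle identity for $v$, shows that $\theta_{t(g)}\circ\alpha_g\circ\theta_{s(g)}^{-1}=\operatorname{Ad}(W_g)\circ(\alpha\otimes\delta)_g$ for all $g$ and that $W_g\,(\alpha\otimes\delta)_g(W_{g'})=W_{gg'}$ on $\mathcal G^{(2)}$; hence $\big((\theta_x)_x,(W_g)_g\big)$ is a cocycle conjugacy realizing $\alpha\simeq_{\mathrm{cc}}\alpha\otimes\delta$, which is \ref{prop:main_technical:1}.

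I expect the main obstacles to be technical rather than conceptual. The first is keeping everything genuinely measurable: that $(P_x)$ with its $G_x$- and $\mathcal S$-structure is an honest measurable field of Polish spaces with Polish-group actions, so that von Neumann selection and Lemma~\ref{lemma:cohomology_lemma} apply, and that every section, cocycle and sequence produced en route depends measurably on $x$. The second, and more delicate, is the interplay between the orbit equivalence relation $\mathcal S$ and the field of isotropy groups: the chosen section $\hat g$ is not multiplicative, so care is needed both in setting up the $\mathcal S$-action on $(P_x)$ so that the hypotheses of Lemma~\ref{lemma:cohomology_lemma} are literally met, and in the final verification that the assembled $W$ is an honest groupoid $1$-cocycle rather than one twisted by a scalar. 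By contrast, the only genuine analytic input --- that the fibrewise cocycle conjugacies can be arranged to lie in one and the same, $\mathcal S$-invariant, orbit closure --- is furnished wholesale by Lemma~\ref{lem:special-cc} and the theory of strongly self-absorbing actions built in the earlier sections.
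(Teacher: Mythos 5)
Your overall strategy is the paper's: fibrewise Polish spaces $P_x$ of cocycle conjugacies, an action of the orbit equivalence relation $\mathcal{S}$ on the field $(P_x)_{x\in X}$ by conjugacies, verification of the orbit-closure hypothesis via Lemma~\ref{lem:special-cc} and strong self-absorption, then Lemma~\ref{lemma:cohomology_lemma} and reassembly. However, there is one genuine gap, precisely at the point you flag as ``care is needed'' and then defer: the section of $q\colon\mathcal{G}\to\mathcal{S}$. If $\hat g$ is merely a measurable set-theoretic section, then $\gamma_{(x,y)}\circ\gamma_{(y,z)}$ and $\gamma_{(x,z)}$ (and likewise the $\beta$'s) differ by composition with $(\alpha\otimes\delta)_h$ for an isotropy element $h\in H_z$ depending on $(x,y,z)$, so the hypotheses of Lemma~\ref{lemma:cohomology_lemma} --- which require these composition laws to hold \emph{exactly} --- are not met, and your assembled $W$ fails the groupoid cocycle identity by an isotropy-twisted unitary, not by a scalar. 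The missing idea is that Zimmer-amenability buys you a \emph{multiplicative} section: by Connes--Feldman--Weiss, $\mathcal{S}$ is hyperfinite, so after discarding a null set $X$ splits into an $\mathcal{S}$-invariant part with finite orbits and a part on which $\mathcal{S}$ is generated by a free nonsingular $\Z$-action, and in either case $q$ admits a measurable section that is a groupoid morphism. Only then does $\mathcal{G}$ decompose as a semidirect product of the field $(H_x)_{x\in X}$ by $\mathcal{S}$, and only then does the rest of your scheme close up.

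A smaller point concerns your verification of the closure hypothesis by chaining through the base point $\sigma^{(0)}(x)=(\operatorname{id}_{B_x}\otimes 1_{\mathcal R},1)$. That point is not in $P_x$ (the embedding is not surjective), so you must enlarge to a space of cocycle morphisms; but the identification of the $u$-type topology with point-strong convergence in Remark~\ref{rem:Polish-spaces} rests on Haagerup's result for \emph{isomorphisms} with a fixed trace composition, so ``two-sided point-strong approximation equals mutual orbit-closure membership'' is not literally licensed at $\sigma^{(0)}(x)$. The clean repair is the one the paper uses: take the two approximating sequences $y_n$ (for $(\theta_x,\mathbbm{v}_x)$) and $w_n$ (for $\beta_{(x,y)}(\theta_y,\mathbbm{v}_y)$), form $z_n=w_ny_{m_n}^*$ for a suitable diagonal subsequence justified by Lemma~\ref{lemma:point_strong_dense_subset}, and only then invoke Remark~\ref{rem:Polish-spaces} for the two genuine cocycle conjugacies. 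With these two repairs your argument coincides with the paper's proof; the implication \ref{prop:main_technical:1}$\Rightarrow$\ref{prop:main_technical:2} and the final assembly (modulo the left/right convention in writing $g$ as an isotropy element times a section element) are fine as stated.
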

\begin{proof}
We note that by following the argument outlined in Example~\ref{ex:central_decomposition} and by applying Corollary~\ref{cor:equivalence_equivariant_McDuff}, we see that $\alpha \simeq_{\mathrm{cc}} \alpha \otimes \delta$ implies that one can find a cocycle conjugacy between $\alpha$ and $\alpha\otimes\delta$ that induces the identity map on $X$.
Hence \ref{prop:main_technical:1} implies \ref{prop:main_technical:2}.

In order to prove the other implication, assume \ref{prop:main_technical:2} holds. 
To verify \ref{prop:main_technical:1}, we will show the existence of a measurable field of $*$-isomorphisms ${X \ni x \mapsto \theta_x \colon B_x \rightarrow B_x \bar{\otimes} \mathcal{R}}$ and unitaries ${\mathcal{G} \ni (g,x) \mapsto w_{(g,x)} \in \mathcal{U}(B_{g \cdot x} \bar{\otimes} \mathcal{R})}$ such that
\begin{align}
\theta_{g \cdot x} \circ \alpha_{(g,x)} \circ \theta_{x}^{-1} &= \mathrm{Ad} \big( w_{(g,x)} \big) \circ (\alpha \otimes \delta)_{(g,x)} \text{ for all } (g,x) \in \mathcal{G}\label{eq:required_cc}\\
  w_{(g,h \cdot x)} (\alpha \otimes \delta)_{(g,h \cdot x)}(w_{(h,x)}) &= w_{(gh,x)} \text{ for all } g,h \in G, x \in X.\label{eq:required_cocycle}
\end{align}
For every $x \in X$, denote by $P_x$ the Polish space of cocycle conjugacies from $(B_x, \alpha|_{H_x})$ to $(B_x \bar{\otimes} \mathcal{R}, (\alpha \otimes \delta)|_{H_x})$ as per Remark~\ref{rem:Polish-spaces}. 
In this way, we get a measurable field of Polish spaces $X \ni x \mapsto P_x$. Note that by assumption the sets $P_x$ are all non-empty and hence, there exists some measurable section 
$ X \ni x \mapsto (\theta_x, \mathbbm{v}_x) \in P_x$.
Defining $w_{(g,x)} := \mathbbm{v}_{x}(g)$ for $g \in H_x$, we get that --- although $w$ is not defined on all of $\mathcal{G}$ yet --- the equations \eqref{eq:required_cc}--\eqref{eq:required_cocycle} are satisfied whenever they make sense.
In the rest of the proof we will show with the help of Lemma \ref{lemma:cohomology_lemma} that there exists a (potentially different) section for which there exists a well-defined map $w$ on all of $\mathcal{G}$ obeying conditions \eqref{eq:required_cc}--\eqref{eq:required_cocycle}.

Denote by $\mathcal{S}$ the countable non-singular orbit equivalence relation associated to $\mathcal{G}$, i.e.,
\[\mathcal{S}=\{(g \cdot x,x) \mid (g,x) \in \mathcal{G}\}.\]
As $G \acts (X,\mu)$ is amenable, the relation $\mathcal{S}$ is amenable and hence it follows by the Connes--Feldman--Weiss theorem \cite{ConnesFeldmanWeiss81} that after neglecting a set of measure zero, there exists a partition of $X$ into $\mathcal{S}$-invariant Borel subsets $X_0\sqcup X_1$ such that the restriction of $\mathcal{S}$ to $X_0$ has finite orbits and the restriction to $X_1$ is induced by a free non-singular action of $\Z$. 
This implies that the map $q \colon \mathcal{G} \rightarrow \mathcal{S}\colon (g,x) \mapsto (g \cdot x,x)$ admits a measurable section, i.e., a measurable groupoid morphism $s \colon \mathcal{S} \rightarrow \mathcal{G}$ such that $q \circ s = \mathrm{id}_\mathcal{S}$. 
Therefore, we can view $\mathcal{G}$ as the semi-direct product of the field of groups $(H_x)_{x \in X}$ and the measurable field of group isomorphisms $\phi_{(x,y)}\colon H_y \rightarrow H_x$ given by $\phi_{(x,y)}(g)= s(x,y) g s(x,y)^{-1}$.
Note that $\phi_{(x,y)} \circ \phi_{(y,z)} = \phi_{(x,z)}$ for all $(x,y), (y,z) \in \mathcal{S}$.

This means that in order to define a measurable field $\mathcal{G} \ni (g,x) \mapsto w_{(g,x)} \in \mathcal{U}(B_{g \cdot x} \bar{\otimes} \mathcal{R})$ satisfying \eqref{eq:required_cocycle}, it suffices to find measurable families of unitaries 
${v{(x,y)} \in \mathcal{U}(B_x \bar{\otimes} \mathcal{R})}$ for $(x,y) \in \mathcal{S}$ and 
$\mathbbm{v}_x(g) \in \mathcal{U}(B_x \bar{\otimes} \mathcal{R})$ for $x \in X, g \in H_x$ such that
\begin{itemize}
\item $v$ is a cocyle for the action of $\mathcal{S}$ on the field of factors $(B_x\bar{\otimes} \mathcal{R})_{x \in X}$ induced by $s$, i.e.,
$v{(x,y)} (\alpha \otimes \delta)_{s(x,y)}(v{(y,z)})=v(x,z)$ for all $(x,y), (y,z) \in \mathcal{S}$;
\item for each $x \in X$, the family $(\mathbbm{v}_x(g))_{g \in H_x}$ defines a cocycle for the action $H_x \acts B_x \bar{\otimes} \mathcal{R}$;
\item $\mathbbm{v}_x(g) = v{(x,y)} (\alpha \otimes \delta)_{s(x,y)}\left(\mathbbm{v}_y\left({\phi_{(y,x)}(g)}\right)\right) (\alpha \otimes \delta)_g\left(v(x,y)^*\right) $ for all $(x,y) \in \mathcal{S}$ and $g \in H_x$. 
\end{itemize}
If these conditions are met, then setting $w_{g s(x,y)} := \mathbbm{v}_x(g)(\alpha \otimes \delta)_g (v(x,y))$ for $(x,y) \in \mathcal{S}$ and $g \in H_x$ yields condition \eqref{eq:required_cocycle}.
Moreover, in order for a measurable field of $*$-isomorphisms ${X \ni x \mapsto \theta_x \colon B_x \rightarrow B_x \bar{\otimes} \mathcal{R}}$ to satisfy \eqref{eq:required_cc}, it then suffices to check that 
\begin{itemize}
\item for each $x \in X$, the pair $(\theta_x, \mathbbm{v}_x)$ is a cocycle conjugacy in $P_x$;
\item for $(x,y) \in \mathcal{S}$ we have $\theta_x \circ \alpha_{s(x,y)} \circ \theta_y^{-1} = \mathrm{Ad}(v(x,y))\circ (\alpha \otimes \delta)_{s(y,z)}$.
\end{itemize}
We introduce some notation to rephrase this in terms of the terminology of Lemma~\ref{lemma:cohomology_lemma}. Consider the natural action $\mathcal{U}(B_x \bar{\otimes} \mathcal{R}) \acts P_x$ given by composing a cocycle conjugacy with the inner one given by $\mathrm{Ad}(u)$ for $u \in \mathcal{U}(B_x \bar{\otimes} \mathcal{R})$ as per Remark~\ref{rem:Polish-spaces}.
In this way, we get a measurable field $(\mathcal{U}(B_x \bar{\otimes} \mathcal{R}) \acts P_x)_{x \in X}$ of continuous actions of Polish groups on Polish spaces.
Let us convince ourselves that in the terminology of Lemma~\ref{lemma:cohomology_lemma}, the equivalence relation $\mathcal{S}$ acts by conjugacies on this field of actions.
Firstly, we have a measurable field of group isomorphisms 
\[
{\mathcal{S} \ni (x,y) \mapsto  \gamma_{(x,y)} = (\alpha \otimes \delta)_{s(x,y)}|_{\mathcal{U}(B_y \bar{\otimes} \mathcal{R})} \colon \mathcal{U}(B_y \bar{\otimes} \mathcal{R})\rightarrow \mathcal{U}(B_x \bar{\otimes} \mathcal{R})}
\]
such that $\gamma_{(x,y)} \circ \gamma_{(y,z)} = \gamma_{(x,z)}$ for all $(x,y), (y,z) \in \mathcal{S}$.
The latter formula holds as $\alpha\otimes\delta$ was a $\mathcal G$-action and $s: \mathcal{S}\to\mathcal{G}$ is a section.
Secondly, we have an action of $\mathcal{S}$ on $(P_x)_{x \in X}$ as follows.
Given $(x,y) \in \mathcal{S}$ and $(\theta, \mathbbm{v}) \in P_y$, we define $\beta_{(x,y)}(\theta, \mathbbm{v}) := (\tilde{\theta}, \tilde{\mathbbm{v}})$, where
\[
\tilde{\theta} = (\alpha \otimes \delta)_{s(x,y)} \circ \theta \circ \alpha_{s(y,x)} \quad\text{and}\quad
\tilde{\mathbbm{v}}(h) = (\alpha \otimes \delta)_{s(x,y)}(\mathbbm{v}({\phi_{(y,x)}(h)})) \text{ for } h \in H_x.
\]
This construction yields a well-defined cocycle conjugacy in $P_x$, and we get a well-defined map $\beta_{(x,y)}\colon P_y \rightarrow P_x$.
Together these maps combine to form a measurable field of homeomorphisms $\mathcal{S} \ni (x,y) \mapsto \beta_{(x,y)}\colon P_y \to P_x$ such that $\beta_{(x,y)} \circ \beta_{(y,z)} = \beta_{(x,z)}$ for all $(x,y), (y,z) \in \mathcal{S}$. 
This formula holds once again because $\alpha\otimes\delta$ and $\alpha$ are $\mathcal G$-actions and $s\colon \mathcal{S}\to\mathcal{G}$ is a section.
Moreover, the maps $\beta$ and $\gamma$ are compatible with the measurable field of actions ${(\mathcal{U}(B_x \bar{\otimes} \mathcal{R}) \acts P_x)_{x \in X}}$ (as required for Lemma~\ref{lemma:cohomology_lemma}), since for any $(x,y) \in \mathcal{S}, u \in \mathcal{U}(B_y \bar{\otimes} \mathcal{R})$ and $(\theta,\mathbbm{v}) \in P_y$ we may simply compare definitions and observe
\[
\beta_{(x,y)}(u \cdot (\theta,\mathbbm{v})) = \gamma_{(x,y)}(u) \cdot \beta_{(x,y)}(\theta,\mathbbm{v}).
\]
Having introduced all this data, our previous discussion can be rephrased. 
In order to complete the proof, it suffices to find a measurable section $X \ni x \mapsto \sigma(x) \in P_x$ and a measurable family $\mathcal{S} \ni (x,y) \mapsto v(x,y) \in \mathcal{U}(B_x \bar{\otimes} \mathcal{R})$ such that
\begin{itemize}
\item $v(x,y) \gamma_{(x,y)}(v(y,z)) = v(x,z)$ for all $(x,y), (y,z) \in \mathcal{S}$;
\item $v(x,y) \cdot \beta_{(x,y)}(\sigma(y)) = \sigma(x)$ for all $(x,y) \in \mathcal{S}$.
\end{itemize}
By Lemma~\ref{lemma:cohomology_lemma}, such maps exist if we can merely show that
there exists a measurable section $X \ni x \mapsto (\theta_x, \mathbbm{v}_x) \in P_x$ such that for all $(x,y) \in \mathcal{S}$, the element $(\theta_x, \mathbbm{v}_x)$ belongs to the closure of $\mathcal{U}(B_x \bar{\otimes}\mathcal{R}) \cdot \beta_{(x,y)}(\theta_y,\mathbbm{v}_y)$.
We claim that this is indeed the case.

Consider any measurable section $X \ni x \mapsto (\theta'_x, \mathbbm{v}'_x) \in P_x$. 
As $\delta$ is strongly self-absorbing, we can fix a cocycle conjugacy $(\Phi, \mathbb{U})$ from $(\mathcal{R}, \delta)$ to $(\mathcal{R}\bar{\otimes} \mathcal{R}, \delta \otimes \delta)$ that is approximately unitarily equivalent to $\operatorname{id}_{\mathcal R}\otimes 1_{\mathcal R}$.
For each $x \in X$ we can define the map
\[
\Lambda_x\colon P_x \rightarrow P_x,\quad (\theta, \mathbbm{v}) \mapsto \big( (\theta, \mathbbm{v})^{-1} \otimes \mathrm{id}_\mathcal{R} \big) \circ \big( \mathrm{id}_{B_x} \otimes (\Phi, \mathbb{U}) \big) \circ (\theta, \mathbbm{v})
\]
Then we get a new measurable section
\[
X \ni x \mapsto (\theta_x, \mathbbm{v}_x) := \Lambda_x(\theta'_x, \mathbbm{v}'_x) \in P_x.
\]
We claim that this section does the trick. Fix $(x,y) \in \mathcal{S}$. 
If we denote $(\tilde{\theta}_x, \tilde{\mathbbm{v}}_x):= \beta_{(x,y)}(\theta_y, \mathbbm{v}_y)$, we need to convince ourselves that the cocycle conjugacy $(\theta_x, \mathbbm{v}_x)$ is in the closure of $\mathcal{U}(B_x \bar{\otimes}\mathcal{R})\cdot (\tilde{\theta}_x, \tilde{\mathbbm{v}}_x)$.
First of all, we observe that the construction of $(\theta_x, \mathbbm{v}_x)=\Lambda_x(\theta_x', \mathbbm{v}_x')$ can be seen as a special case of Lemma~\ref{lem:special-cc} for $M_1=M_2=B_x$, $G=H_x$, $\phi=\operatorname{id}_{H_x}$ and $\Delta=\operatorname{id}_{\mathcal R}$.
Hence we can find a sequence of unitaries $y_n\in\mathcal{U}(B_x\otimes\mathcal R)$ such that
\begin{equation} \label{eq:main-tech:yn}
y_n^*\theta_x(b)y_n \to b\otimes 1_{\mathcal R},\quad y_n^*\mathbbm{v}_x(h)(\alpha\otimes\delta)_h(y_n) \to 1_{B_x\otimes\mathcal R}
\end{equation}
in the strong-$*$ operator topology for all $b\in B_x$ and $h\in H_x$.

Next, by our previous notation, the group isomorphism $\phi_{(x,y)}\colon H_y\to H_x$ is exactly the one so that the isomorphism of von Neumann algebras $\alpha_{s(x,y)}\colon B_y\to B_x$ can be viewed as a (genuine) conjugacy between the $H_x$-actions $(\alpha|_{H_y})_{\phi_{(y,x)}}$ and $\alpha|_{H_x}$.
Moreover if $s(x,y)=(k,y)$ for $k\in G$, then $(\alpha\otimes\delta)_{s(x,y)}=\alpha_{s(x,y)}\otimes\delta_k$ and $\delta_k$ can be seen as a conjugacy between the $H_x$-actions $(\delta|_{H_y})_{\phi_{(y,x)}}$ and $\delta|_{H_x}$.

By definition of $\beta_{(x,y)}$, we have
\[
\begin{array}{ccl}
\tilde{\theta}_x &=& (\alpha \otimes \delta)_{s(x,y)} \circ \theta_y \circ \alpha_{s(y,x)} \\
&=&  (\alpha \otimes \delta)_{s(x,y)} \circ \big( {\theta_y'}^{-1} \otimes \mathrm{id}_\mathcal{R} \big) \circ \big( \mathrm{id}_{B_y} \otimes \Phi \big) \circ \theta_y' \circ \alpha_{s(y,x)} \\
&=& \big( ({\theta_y'}\circ\alpha_{s(y,x)})^{-1} \otimes \delta_k \big) \circ \big( \mathrm{id}_{B_y} \otimes \Phi \big) \circ (\theta_y' \circ \alpha_{s(y,x)})
\end{array}
\]
and for all $h\in H_x$ with $g=\phi_{(y,x)}(h)$, one has
\[
\begin{array}{ccl}
\tilde{\mathbbm{v}}_x(h) 
&=& (\alpha \otimes \delta)_{s(x,y)}\Big( ({\theta_y'}^{-1}\otimes\operatorname{id}_{\mathcal R})\Big( (\operatorname{id}_{B_y}\otimes\Phi)(\mathbbm{v}_y'(g)) \cdot (1_{B_y}\otimes \mathbb{U}_g)\cdot (\mathbbm{v}_y'(g)^* \otimes1_{\mathcal R}) \Big) \Big) \\
&=& \big( ({\theta_y'}\circ\alpha_{s(y,x)})^{-1}\otimes\delta_k \big)\Big( (\operatorname{id}_{B_y}\otimes\Phi)(\mathbbm{v}_y'(g)) \cdot (1_{B_y}\otimes \mathbb{U}_g)\cdot (\mathbbm{v}_y'(g)^* \otimes1_{\mathcal R}) \Big)
\end{array}
\]
We conclude that Lemma~\ref{lem:special-cc} is applicable to the cocycle conjugacy $(\tilde{\theta}_x,\tilde{\mathbbm{v}}_x)$, where we insert $G_1=H_x$, $G_2=H_y$, $\phi=\phi_{(y,x)}$, $M_1=B_x$, $M_2=B_y$, $\Delta=\delta_k$ and the cocycle conjugacy $(\theta'_y\circ\alpha_{s(y,x)}, \mathbbm{v}'_y\circ\phi_{(y,x)})$ in place of $(\theta,\mathbbm{v})$.
This allows us to find a sequence of unitaries $w_n\in\mathcal{U}(B_x\otimes\mathcal R)$ satisfying
\begin{equation} \label{eq:main-tech:wn}
w_n(b\otimes 1_{\mathcal R})w_n^* \to \tilde{\theta}_x(b),\quad w_n(\alpha\otimes\delta)_h(w_n)^* \to \tilde{\mathbbm{v}}_x(h)
\end{equation}
in the strong-$*$ operator topology for all $b\in B_x$ and $h\in H_x$.
If we consider both of the conditions \eqref{eq:main-tech:yn} and \eqref{eq:main-tech:wn} and keep in mind that $G$ is countable and $B_x$ is separable and semi-finite, we can apply Lemma~\ref{lemma:point_strong_dense_subset} and find an increasing sequence of natural numbers $m_n$ such that the resulting sequence of unitaries $z_n=w_ny_{m_n}^*$ satisfies
\[
z_n\theta_x(b)z_n^* \to \tilde{\theta}_x(b),\quad z_n\mathbbm{v}_x(h)(\alpha\otimes\delta)_h(z_n)^* \to \tilde{\mathbbm{v}}_x(h)
\]
in the strong-$*$ operator topology for all $b\in B_x$ and $h\in H_x$.
Then it follows from Remark~\ref{rem:Polish-spaces} that $(\theta_x, \mathbbm{v}_x)$ indeed belongs to the closure of $\mathcal{U}(B_x \bar{\otimes}\mathcal{R})\cdot (\tilde{\theta}_x, \tilde{\mathbbm{v}}_x)$.
This finishes the proof.
\end{proof}

\begin{definition}[see {\cite[Definition~3.4]{Delaroche79}}]
An action $\alpha \colon G \acts B$ of a countable discrete group on a von Neumann algebra is called amenable, if there exists an equivariant conditional expectation 
\[
P\colon (\ell^\infty(G) \bar{\otimes} B, \tau \otimes \alpha) \rightarrow (B, \alpha),
\]
where $\tau$ denotes the left translation action $G \acts \ell^\infty(G)$.
\end{definition}

By \cite{Delaroche82}, an action $\alpha$ as above is amenable if and only if its restriction to $\mathcal{Z}(B)$ is amenable, which is equivalent to the action on the measure-theoretic spectrum of the center being amenable in the sense of Zimmer.
Recall that an automorphism $\alpha\in\operatorname{Aut}(M)$ on a separably acting von Neumann algebra is properly centrally non-trivial, if for every non-zero projection $p\in\mathcal{Z}(M)$, the restriction of $\alpha_\omega$ on $pM_\omega$ is non-trivial.

The following result contains Theorem~\ref{theorem-A} for actions on semi-finite von Neumann algebras as a special case via $H=G$.

\begin{corollary} \label{cor:McDuff-passes}
Let $G$ be a countable discrete group and $B$ a semi-finite von Neumann algebra with separable predual such that $B \cong B \bar{\otimes} \mathcal{R}$. 
Let $\alpha: G \acts B$ be an amenable action.
Suppose $H\subseteq G$ is a normal subgroup such that for every $g\in G\setminus H$, the automorphism $\alpha_g$ is properly centrally non-trivial.
Let $G_1=G/H$ with quotient map $\pi: G\to G_1$ and let $\delta: G_1\curvearrowright\mathcal R$ be a strongly self-absorbing action.
Then $\alpha \simeq_{\mathrm{cc}} \alpha \otimes \delta_\pi$.
\end{corollary}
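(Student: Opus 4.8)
The plan is to disintegrate $\alpha$ over the centre of $B$ and deduce the statement from the measurable local-to-global principle Theorem~\ref{thm:main_technical}, verifying its fibrewise hypothesis with the help of Lemma~\ref{lem:approx-central-embeddings} and Theorem~\ref{theorem:sufficient_criterium_strong_absorption}. Since $\alpha$ is amenable, its restriction to $\mathcal{Z}(B)$ is amenable, so the induced action of $G$ on the spectrum $(X,\mu)$ of $\mathcal{Z}(B)$ (a standard space, which we take with a probability measure) is amenable in the sense of Zimmer; in particular the transformation groupoid $\mathcal{G}=G\ltimes X$ is amenable, so almost every isotropy group $H_x\leq G$ is amenable. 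Disintegrating as in Example~\ref{ex:central_decomposition} yields a measurable field of semi-finite factors $(B_x)_{x\in X}$, each of which is McDuff with separable predual because $B\cong B\bar{\otimes}\mathcal{R}$, together with an action $\bar{\alpha}$ of $\mathcal{G}$ on this field. Moreover $\delta_\pi=\delta\circ\pi$ is strongly self-absorbing as a $G$-action: if $\Phi,\mathbb{U},(v_n)$ witness strong self-absorption of $\delta$, then $\Phi,\mathbb{U}\circ\pi,(v_n)$ witness it for $\delta_\pi$; and since $\delta$ has approximately inner half-flip by Theorem~\ref{theorem:equivalence_ssa}, the same unitaries show that any restriction $\delta_\pi|_{L}$ of $\delta_\pi$ to a subgroup $L\leq G$ has approximately inner half-flip as well. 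Finally, since $\delta_\pi$ acts trivially on $\mathcal{Z}(\mathcal{R})=\mathbb{C}$, the action $\alpha\otimes\delta_\pi$ induces the same action on $X$ as $\alpha$ and disintegrates to the $\mathcal{G}$-action $\bar{\alpha}\otimes\delta_\pi$ given by $(\bar{\alpha}\otimes\delta_\pi)_{(g,x)}=\bar{\alpha}_{(g,x)}\otimes(\delta_\pi)_g$.

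By Theorem~\ref{thm:main_technical} applied to $\bar{\alpha}$ and the strongly self-absorbing $G$-action $\delta_\pi$, together with Example~\ref{ex:central_decomposition}, it suffices to prove that for almost every $x\in X$ one has $\bar{\alpha}|_{H_x}\simeq_{\mathrm{cc}}\bar{\alpha}|_{H_x}\otimes\delta_\pi|_{H_x}$ as actions of $H_x$ on $B_x$ and $B_x\bar{\otimes}\mathcal{R}$. So fix such an $x$ with $H_x$ amenable and $B_x$ a McDuff factor, and let $N_x\trianglelefteq H_x$ be the normal subgroup of all $g\in H_x$ for which $(\bar{\alpha}_{(g,x)})_\omega\in\operatorname{Aut}((B_x)_\omega)$ is trivial. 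The heart of the matter is the claim that $N_x\subseteq H$ for almost every $x$. Assuming this, $\delta_\pi|_{H_x}$ is trivial on $N_x$ (because $\pi(N_x)=\{1\}$), hence factors through $H_x/N_x$, which is amenable. As $H_x$ is discrete, the equicontinuous part $(B_x)_{\omega,\bar{\alpha}|_{H_x}}$ equals $(B_x)_\omega$, so Lemma~\ref{lem:approx-central-embeddings} --- applied with group $H_x$, subgroup $N_x$, quotient $H_x/N_x$, and the action of $H_x/N_x$ on $\mathcal{R}$ through which $\delta_\pi|_{H_x}$ factors --- yields a unital equivariant $*$-homomorphism $(\mathcal{R},\delta_\pi|_{H_x})\to\big((B_x)_{\omega,\bar{\alpha}|_{H_x}},(\bar{\alpha}|_{H_x})_\omega\big)$. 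Since $\delta_\pi|_{H_x}$ has approximately inner half-flip and $B_x$ is a semi-finite factor, Theorem~\ref{theorem:sufficient_criterium_strong_absorption} (its centre condition being vacuous) now produces a cocycle conjugacy $\bar{\alpha}|_{H_x}\simeq_{\mathrm{cc}}\bar{\alpha}|_{H_x}\otimes\delta_\pi|_{H_x}$, which is exactly the fibrewise condition required in Theorem~\ref{thm:main_technical}. Combining this with Theorem~\ref{thm:main_technical} and Example~\ref{ex:central_decomposition} gives $\alpha\simeq_{\mathrm{cc}}\alpha\otimes\delta_\pi$.

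It remains to prove that $N_x\subseteq H$ almost everywhere, equivalently that for almost every $x$ and every $g\in H_x\setminus H$ the automorphism $\bar{\alpha}_{(g,x)}$ of $B_x$ is centrally non-trivial. Since $G$ is countable it suffices to fix $g\in G\setminus H$ and show that the set $Z_g=\{x\in X\mid g\cdot x=x,\ (\bar{\alpha}_{(g,x)})_\omega=\operatorname{id}\}$ is null. By hypothesis $\alpha_g$ is properly centrally non-trivial. Writing $X^g=\{x\mid g\cdot x=x\}$, the automorphism $\alpha_g$ restricts to $\mathbbm{1}_{X^g}B$, on which it fixes the centre $L^\infty(X^g)$ pointwise and hence disintegrates into the field $(\bar{\alpha}_{(g,x)})_{x\in X^g}$ of fibrewise automorphisms. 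A disintegration argument for the asymptotic centraliser then identifies the largest central projection of $B$ on which $\alpha_{\omega,g}$ acts trivially with $\mathbbm{1}_{Z_g}$; since $\alpha_g$ is properly centrally non-trivial, this projection vanishes, i.e.\ $\mu(Z_g)=0$, which proves the claim.

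I expect this last point --- the compatibility between central triviality of $\alpha_g$ on the ambient algebra $B$ and fibrewise central triviality of the automorphisms $\bar{\alpha}_{(g,x)}$, which hinges on controlling how asymptotically central sequences behave under the direct-integral decomposition --- to be the main technical obstacle. Everything else is an assembly of Theorem~\ref{thm:main_technical}, Lemma~\ref{lem:approx-central-embeddings} and Theorem~\ref{theorem:sufficient_criterium_strong_absorption}, together with the routine verifications that $\delta_\pi$ and its subgroup restrictions inherit strong self-absorption and approximately inner half-flip from $\delta$, and (for the special case $H=G$, i.e.\ Theorem~\ref{theorem-A}) the observation that the claim $N_x\subseteq H$ is then vacuous.
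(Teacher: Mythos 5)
Your proposal follows essentially the same route as the paper: disintegrate over the centre, observe that amenability of $\alpha$ makes the transformation groupoid and hence almost all isotropy groups $H_x$ amenable, verify the fibrewise hypothesis of Theorem~\ref{thm:main_technical}, and reassemble. The only difference in assembly is cosmetic: the paper deduces the fibrewise cocycle conjugacy by invoking Theorem~\ref{theorem:model-absorption}\ref{theorem:model-absorption:2}, whereas you unwind that statement and go directly through Lemma~\ref{lem:approx-central-embeddings} and Theorem~\ref{theorem:sufficient_criterium_strong_absorption}; since the proof of Theorem~\ref{theorem:model-absorption}\ref{theorem:model-absorption:2} consists of exactly those two steps (and your version avoids the cosmetic mismatch that Theorem~\ref{theorem:model-absorption} is stated for outer rather than strongly self-absorbing $\delta$), this is the same argument. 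Your side remarks --- that $\delta_\pi$ and its subgroup restrictions inherit strong self-absorption and approximately inner half-flip, and that $(B_x)_{\omega,\bar\alpha|_{H_x}}=(B_x)_\omega$ for discrete $H_x$ --- are all correct and implicitly used in the paper as well.

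The one place where your write-up stops short is precisely the step you flag yourself: the passage from proper central non-triviality of $\alpha_g$ on $B$ to central non-triviality of $\bar\alpha_{(g,x)}$ on $B_x$ for almost every fixed point $x$. You state the desired identification of the largest central projection on which $\alpha_{\omega,g}$ is trivial with $\mathbbm{1}_{Z_g}$, but you do not prove it, and it is genuinely not a formality: central sequences of $B$ need not disintegrate into central sequences of the fibres in any naive way, and one direction of the comparison requires a measurable-selection argument to glue fibrewise centralizing sequences into a global one. The paper does not prove this either --- it cites Masuda--Tomatsu (Theorem~9.14 of their paper on the Rohlin flow) for exactly this equivalence. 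So your proof is complete modulo importing that known result; as written, the claim ``a disintegration argument for the asymptotic centraliser then identifies\dots'' is an assertion of the needed lemma rather than a proof of it, and you should either supply the argument or cite it.
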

\begin{proof}
Adopt the notation from Example~\ref{ex:central_decomposition} and Theorem~\ref{thm:main_technical}.
We identify $\alpha$ with an action of ${\mathcal{G}:= G \ltimes X}$ on a measurable field of semi-finite factors with separable predual $(B_x)_{x \in X}$. 
Denote by ${X \ni x \mapsto H_x}$ the measurable field of isotropy groups.
Amenability of the action $\alpha$ implies that the action on $(X,\mu)$ is amenable in the sense of Zimmer, 
which in turn implies amenability of the associated transformation groupoid. 
In particular almost all isotropy groups $H_x$ are amenable. 

By assumption on $H$ and \cite[Theorem 9.14]{MasudaTomatsu16}, it follows for every $g\in G\setminus H$ and $\mu$-almost all $x\in X$ that either $g\notin H_x$ or the automorphism $(\alpha|_{H_x})_g$ on the McDuff factor $B_x$ is centrally non-trivial.
In other words, after discarding a null set from $X$, we may assume for all $x\in X$ that for all $h\in H_x\setminus(H_x\cap H)$, the automorphism $(\alpha|_{H_x})_g$ on $B_x$ is centrally non-trivial.
By Theorem~\ref{theorem:model-absorption}, we get that $(\alpha|_{H_x})$ is cocycle conjugate to $(\alpha\otimes\delta_\pi)|_{H_x}$.
The claim then follows via Theorem~\ref{thm:main_technical}.
\end{proof}


\section{Actions on arbitrary von Neumann algebras}

In this section we shall generalize some of the main results we obtained so far, namely Corollaries~\ref{cor:equivalence_equivariant_McDuff} and \ref{cor:McDuff-passes}, to the context of group actions on not necessarily semi-finite von Neumann algebras.
This uses standard results in Tomita--Takesaki theory, which allow us to reduce the generalize case to the semi-finite case considered in the previous sections.
We will henceforth assume that the reader is familiar with the basics of Tomita--Takesaki theory as well as the theory of crossed products (for a thorough treatment the reader should consult the book \cite{Takesaki03}), although we are about to recall the specific points needed about the former for this section.

\begin{remark}[see {\cite[Chapters VIII, XII]{Takesaki03}}] \label{rem:tt-basics}
Let $M$ be a separably acting von Neumann algebra.
Given a faithful normal state $\varphi$ on $M$, we denote by $\sigma^\varphi: \mathbb{R}\curvearrowright M$ its associated \emph{modular flow}.
If $\psi$ is another faithful normal state on $M$, we denote by $(D\psi: D\varphi): \mathbb{R}\to\mathcal{U}(M)$ the associated \emph{Connes cocycle}, which is a $\sigma^\varphi$-cocycle satisfying $\operatorname{Ad}(D\psi: D\varphi)_t\circ\sigma^\varphi_t=\sigma^\psi_t$ for all $t\in\mathbb{R}$.
The crossed product von Neumann algebra $\tilde{M}=M\rtimes_{\sigma^\varphi}\mathbb{R}$ is called the \emph{continuous core} of $M$ and does not depend on the choice of $\varphi$ up to canonical isomorphism.
With slight abuse of notation, we will consider $M$ as a von Neumann subalgebra in $\tilde{M}$, and denote by $\lambda^{\sigma^\varphi}: \mathbb{R}\to\mathcal{U}(\tilde{M})$ the unitary representation implementing the modular flow on $M$.
The continuous core $\tilde{M}$ is always a semi-finite von Neumann algebra.
Given any automorphism $\alpha\in\operatorname{Aut}(M)$, there is an induced \emph{extended automorphism} $\tilde{\alpha}\in\operatorname{Aut}(\tilde{M})$ uniquely determined by
\[
\tilde{\alpha}|_M = \alpha \quad\text{and}\quad \tilde{\alpha}(\lambda^{\sigma^\varphi}_t)=(D \varphi\circ\alpha^{-1} : D\varphi)_t\cdot\lambda^{\sigma^\varphi}_t,\quad t\in\mathbb R.
\]
The assignment $\alpha\mapsto\tilde{\alpha}$ defines a continuous homomorphism of Polish groups.
Therefore, given more generally a continuous action $\alpha: G\curvearrowright M$ of a second-countable locally compact group, we may induce the \emph{extended action} $\tilde{\alpha}: G\curvearrowright\tilde{M}$.
Every extended automorphism on $\tilde{M}$ has the property that it commutes with the dual flow $\hat{\sigma}^\varphi: \mathbb{R}\curvearrowright\tilde{M}$.
\end{remark}

In the proof of Theorem \ref{thm:general-McDuff} we will appeal to the following proposition, but only in a special case.
In that particular instance, we note that the needed result can also be deduced from \cite[Lemma 3.3]{Tomatsu17}.

\begin{prop} \label{prop:cp-centralizer}
Let $G$ be a second-countable locally compact group.
Let $\alpha: G\curvearrowright M$ be an action on a separably acting von Neumann algebra.
Then the normal inclusion $M\subset M\rtimes_\alpha G$ induces (via componentwise application of representing sequences) a unital $*$-homomorphism
$(M_{\omega,\alpha})^{\alpha_\omega} \to (M\rtimes_\alpha G)_\omega$.
\end{prop}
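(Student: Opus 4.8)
The plan is to show that, writing $N:=M\rtimes_\alpha G$ (which is $\sigma$-finite, since $M$ has separable predual and $G$ is second countable), the componentwise inclusion $M\hookrightarrow N$ sends $\mathcal C_\omega(M)$-sequences to $\mathcal C_\omega(N)$-sequences and $\mathcal I_\omega(M)$-sequences to $\mathcal I_\omega(N)$-sequences; the asserted map is then obtained by restriction, and it is automatically unital and multiplicative because it is induced by the $*$-homomorphism $M\hookrightarrow N$ applied to representing sequences. The $\mathcal I_\omega$-statement is immediate: the restriction to $M$ of a faithful normal state on $N$ is faithful normal, and the $\sharp$-norms agree on $M$. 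So the real task is to prove that a bounded sequence $(x_n)_n$ representing an element $X\in(M_{\omega,\alpha})^{\alpha_\omega}$ — so $(x_n)_n$ is norm-bounded (say by $1$), lies in $\mathcal C_\omega(M)$, is $(\alpha,\omega)$-equicontinuous, and satisfies $\alpha_g^\omega(X)=X$ for all $g\in G$ — asymptotically centralizes $N$.

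The idea is to exploit the crossed-product structure through the canonical faithful normal semifinite operator-valued weight $T$ from $N$ to $\pi(M)$, with the bimodule property $T(\pi(c)\,y\,\pi(d))=\pi(c)\,T(y)\,\pi(d)$ for $c,d\in M$; fixing a faithful normal state $\phi_0$ on $M$, write $\tilde\phi_0:=\phi_0\circ T$ for the dual weight on $N$ (see \cite[Chapter~X]{Takesaki03}). As a guiding special case, when $G$ is discrete one may replace $T$ by a genuine conditional expectation $E\colon N\to\pi(M)$: then $\Psi_0:=\phi_0\circ E$ is a faithful normal state with $\|[\pi(x_n),\Psi_0]\|_{N_*}\le\|[x_n,\phi_0]\|_{M_*}$ by the module identities, and an analogous strong-$*$ bound, which already yields $(\pi(x_n))_n\in\mathcal C_\omega(N)$ directly from $(x_n)_n\in\mathcal C_\omega(M)$. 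In general I would instead verify the defining condition $\lim_{n\to\omega}\|[\pi(x_n),\psi]\|=0$ of $\mathcal C_\omega(N)$ for all $\psi\in N_*$ directly, without passing through Connes' reduction. Since $\psi\mapsto\lim_{n\to\omega}\|[\pi(x_n),\psi]\|$ is a seminorm on $N_*$ dominated by $2\|\psi\|$, it suffices to treat $\psi$ on a norm-dense subset, and I would take $\psi=\tilde\phi_0(a^*\,\cdot\,b)$ with $a,b$ of the form $\pi(m)\lambda(f)$ ($m\in M$, $f\in C_c(G)$, with $\pi,\lambda$ the covariant data of the crossed product); these lie in the left ideal $\mathfrak n_T$, have $L^2$-vectors that are elementary tensors under the identification $L^2(N,\tilde\phi_0)\cong L^2(M,\phi_0)\otimes L^2(G)$, and their span is norm-dense in $N_*$.

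For such $\psi$ and $y\in N$ (abbreviating $\pi(x_n)$ to $x_n$), the module property of $T$ together with elementary commutator manipulations should give the identity
\[
[x_n,\psi](y)=[x_n,\phi_0]\big(T(a^*yb)\big)+\tilde\phi_0\big(a^*y[x_n,b]\big)+\tilde\phi_0\big([x_n,a^*]\,yb\big),
\]
with $[x_n,\phi_0]\in M_*$. Using the Kadison--Schwarz inequality for $T$ and Cauchy--Schwarz for $\tilde\phi_0$ (and $y^*y\le1$), the three summands are bounded, uniformly over $\|y\|\le1$, by $\|T(a^*a)\|^{1/2}\|T(b^*b)\|^{1/2}\|[x_n,\phi_0]\|_{M_*}$, by $\|a\|_{\tilde\phi_0}\|[x_n,b]\|_{\tilde\phi_0}$, and by $\|b\|_{\tilde\phi_0}\|[x_n^*,a]\|_{\tilde\phi_0}$, respectively. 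The first tends to $0$ along $\omega$ because $(x_n)_n\in\mathcal C_\omega(M)$. For the other two, one computes in $L^2(N,\tilde\phi_0)\cong L^2(M,\phi_0)\otimes L^2(G)$: with $b=\pi(m)\lambda(f)$ one has $[x_n,b]=\int_G f(t)\,\pi\big([x_n,m]-m(\alpha_t(x_n)-x_n)\big)\lambda_t\,dt$, hence
\[
\|[x_n,b]\|_{\tilde\phi_0}\ \lesssim\ \|f\|_2\Big(\|[x_n,m]\|^\sharp_{\phi_0}+\|m\|\sup_{t\in\operatorname{supp}f}\|\alpha_t(x_n)-x_n\|^\sharp_{\phi_0}\Big).
\]
The first term here vanishes in the limit since $(x_n)_n\in\mathcal C_\omega(M)$, and the second vanishes by Lemma~\ref{lemma:lifting_invariance_compact_sets} applied to the $\alpha_\omega$-fixed element $X$ (so $\alpha_g^\omega(X)=X$ for all $g$, whence $\lim_{n\to\omega}\max_{t\in K}\|\alpha_t(x_n)-x_n\|^\sharp_{\phi_0}=0$ for every compact $K\subseteq G$). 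Since $x_n^*$ likewise represents an $\alpha_\omega$-fixed element of $M_{\omega,\alpha}$, the same estimate controls $\|[x_n^*,a]\|_{\tilde\phi_0}$. Thus $\lim_{n\to\omega}\|[x_n,\psi]\|_{N_*}=0$ on the dense set, hence on all of $N_*$, giving $(\pi(x_n))_n\in\mathcal C_\omega(N)$, as required.

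The step I expect to be the main obstacle is the Tomita--Takesaki bookkeeping underlying the previous two paragraphs: confirming that the chosen $a,b$ lie in $\mathfrak n_T$ and that every occurrence of $T$ in the displayed identity falls inside its bimodule domain, pinning down the isomorphism $L^2(N,\tilde\phi_0)\cong L^2(M,\phi_0)\otimes L^2(G)$ precisely enough (including the modular-function twist when $G$ is not unimodular) to justify the $L^2$-estimate, and tracking the module properties of $T$ through the crossed product. These are standard but delicate facts; once they are in place, the rest of the argument is just the ultrapower formalism together with Lemma~\ref{lemma:lifting_invariance_compact_sets} already developed in the paper.
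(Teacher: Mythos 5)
Your argument is correct (granting the standard dual-weight facts you flag at the end), but it takes a genuinely different and considerably heavier route than the paper. The paper's proof is essentially three lines: it represents $M\rtimes_\alpha G$ inside $\mathcal{B}(L^2(G))\bar{\otimes}M$ via the covariant representation $[\pi(x)\xi](g)=\alpha_g^{-1}(x)\xi(g)$, observes that for a bounded sequence representing an $\alpha_\omega$-fixed element of $M_{\omega,\alpha}$ one has $\pi(x_n)-(1\otimes x_n)\to 0$ $*$-strongly along $\omega$ (this is exactly where fixedness plus equicontinuity enter, playing the same role as your appeal to Lemma~\ref{lemma:lifting_invariance_compact_sets}), and then notes that $(1\otimes x_n)_{n}$ clearly lies in $\mathcal{C}_\omega(\mathcal{B}(L^2(G))\bar{\otimes}M)$ because $(x_n)_n\in\mathcal{C}_\omega(M)$, so $(\pi(x_n))_n$ asymptotically commutes with every normal functional on the larger algebra and a fortiori with its restriction to the crossed product. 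This completely sidesteps the operator-valued weight $T$, the dual weight $\tilde\phi_0$, the density argument in $N_*$, and the identification $L^2(N,\tilde\phi_0)\cong L^2(M,\phi_0)\otimes L^2(G)$ with its modular-function twist --- precisely the bookkeeping you single out as the main obstacle. Your commutator identity, the three estimates, and the use of Lemma~\ref{lemma:lifting_invariance_compact_sets} to kill the $\sup_{t\in\operatorname{supp}f}\|\alpha_t(x_n)-x_n\|^\sharp_{\phi_0}$ term are all sound, and your approach has the merit of being intrinsic to the crossed product (reducing in the discrete case to a clean conditional-expectation argument); but the spatial trick used in the paper is both shorter and avoids importing Haagerup's dual-weight machinery, at the mild cost of working in the ambient algebra $\mathcal{B}(L^2(G))\bar{\otimes}M$ and restricting functionals back down at the end.
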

\begin{proof}
Assume $M$ is represented faithfully on a Hilbert space $\mathcal{H}$, and consider the canonical inclusion $\pi\colon M \rtimes_\alpha G \rightarrow \mathcal{B}(L^2(G)) \bar{\otimes} M \subset \mathcal{B}(L^2(G, \mathcal{H}))$, which on $x \in M \subset M \rtimes_\alpha G$ is given by
\[ [\pi(x) \xi](g) = \alpha_g^{-1}(x)\xi(g), \quad \xi \in L^2(G, \mathcal{H}), g \in G.\]
If $(x_n)_{n \in \N}$ is any bounded sequence in $M$ representing an element $x \in (M_{\omega, \alpha})^{\alpha_\omega}$, then the invariance of $x$ will guarantee that
$\pi(x_n) - (1 \otimes x_n) \rightarrow 0$ in the strong-$*$ operator topology as $n \rightarrow \omega$. Since $(1 \otimes x_n)_{n \in \N}$ represents an element in $(\mathcal{B}(L^2(G)) \bar{\otimes} M)_\omega$, it follows that $(\pi(x_n))_{n \in \N}$ represents an element in $(\pi(M\rtimes_\alpha G))_\omega$, so $x \in (M\rtimes_\alpha G)_\omega$. This finishes the proof.
\end{proof}

\begin{prop} \label{prop:Takai-duality}
Let $G$ be a second-countable locally compact group and $\alpha: G \acts M$ an action on a von Neumann algebra with separable predual.
Let $\varphi$ be a faithful normal state on $M$.
Let $\tilde{\alpha}: G\curvearrowright\tilde{M}=M\rtimes_{\sigma^\varphi}\mathbb R$ be the extended action on the continuous core as in Remark~\ref{rem:tt-basics}.
With some abuse of notation, denote by $\tilde{\alpha}$ also the induced action $G\curvearrowright\tilde{M}\rtimes_{\hat{\sigma}^\varphi}\mathbb R$ by acting trivially on the canonical unitary representation implementing $\hat{\sigma}^\varphi$.
Then under the Takesaki--Takai duality isomorphism $\tilde{M}\rtimes_{\hat{\sigma}^\varphi}\mathbb R \cong \mathcal B(L^2(\mathbb R))\bar{\otimes} M$, the action $\tilde{\alpha}$ is cocycle conjugate to $\operatorname{id}_{\mathcal{B}(L^2(\mathbb R))}\otimes\alpha$.
\end{prop}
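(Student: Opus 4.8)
The plan is to realize everything concretely on Hilbert spaces, to produce an explicit unitary implementation of the doubly extended action from the canonical implementation of $\alpha$ together with the relevant Connes cocycles, and then to push it through the Takesaki--Takai duality isomorphism and read off a unitary $1$-cocycle. Represent $M$ on its standard Hilbert space $\mathcal H=L^2(M,\varphi)$, with modular operator $\Delta=\Delta_\varphi$, so that $\sigma^\varphi_t=\operatorname{Ad}(\Delta^{it})$, and let $u\colon G\to\mathcal U(\mathcal H)$ be the canonical implementation of $\alpha$ in the sense of \cite{Haagerup75}; recall that this is a genuine unitary representation with $\operatorname{Ad}(u_g)|_M=\alpha_g$. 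Write $c^{(g)}_t:=(D\varphi\circ\alpha_g^{-1}:D\varphi)_t\in\mathcal U(M)$ for the Connes cocycle, which is a $\sigma^\varphi$-cocycle. Realize $\tilde M=M\rtimes_{\sigma^\varphi}\mathbb R$ on $L^2(\mathbb R)\otimes\mathcal H$ and then $\tilde M\rtimes_{\hat\sigma^\varphi}\mathbb R$ on $L^2(\mathbb R)\otimes L^2(\mathbb R)\otimes\mathcal H$ in the usual way, with the three families of generators $\tilde\pi(\pi_{\sigma^\varphi}(x))$ ($x\in M$), $\tilde\pi(\lambda^{\sigma^\varphi}_q)$ (the ``$\sigma$''-variable) and $\tilde\lambda^{\hat\sigma^\varphi}_s$ (the ``$\hat\sigma$''-variable).

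The first substantial point is that the doubly extended action $\tilde\alpha$ --- acting as the extended action on $\tilde M$ and trivially on the $\lambda^{\hat\sigma^\varphi}_s$ --- is implemented by the unitary
\[
\mathbf U_g\,\eta(r,p)=(c^{(g)}_{-p})^{*}\,u_g\,\eta(r,p),
\]
and, crucially, that $g\mapsto\mathbf U_g$ is a \emph{genuine} unitary representation of $G$. Checking that $\operatorname{Ad}(\mathbf U_g)$ agrees with $\tilde\alpha_g$ on the three families of generators reduces to the defining relation $\operatorname{Ad}(c^{(g)}_t)\circ\sigma^\varphi_t=\sigma^{\varphi\circ\alpha_g^{-1}}_t$, the $\sigma^\varphi$-cocycle identity $c^{(g)}_{s+t}=c^{(g)}_s\sigma^\varphi_s(c^{(g)}_t)$ (which in particular gives $(c^{(g)}_{-p})^{*}c^{(g)}_{q-p}=\sigma^\varphi_{-p}(c^{(g)}_q)$), together with $\hat\sigma^\varphi|_M=\operatorname{id}$; and $\mathbf U_g\mathbf U_h=\mathbf U_{gh}$ follows from multiplicativity of the canonical implementation together with the chain rule and the equivariance $\theta\big((D\psi:D\varphi)_t\big)=(D(\psi\circ\theta^{-1}):D(\varphi\circ\theta^{-1}))_t$ for the Connes cocycle, which yield $c^{(gh)}_t=\alpha_g(c^{(h)}_t)\,c^{(g)}_t$. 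As $\operatorname{Ad}(\mathbf U_\bullet)$ and $\tilde\alpha$ agree on a generating set of $\tilde M\rtimes_{\hat\sigma^\varphi}\mathbb R$, they coincide.

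Now I would invoke the explicit form of the Takesaki--Takai duality isomorphism (see \cite[Chapter X]{Takesaki03}): there is a unitary $W$ on $L^2(\mathbb R)\otimes L^2(\mathbb R)\otimes\mathcal H$, assembled only from Fourier transforms and translations in the two $\mathbb R$-variables and from the modular unitaries $\Delta^{it}$, such that $\operatorname{Ad}(W)$ identifies $\tilde M\rtimes_{\hat\sigma^\varphi}\mathbb R$ with $\mathcal B(L^2(\mathbb R))\bar\otimes M$ (with $M$ acting on the $\mathcal H$-factor, one copy of $L^2(\mathbb R)$ being absorbed into $\mathcal B(L^2(\mathbb R))$ and the other persisting only as a multiplicity). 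Since $W$ interacts in a fully controlled way with the decomposable factor $\eta(r,p)\mapsto(c^{(g)}_{-p})^{*}\eta(r,p)$ and with the factor $1\otimes1\otimes u_g$, a direct computation gives
\[
W\mathbf U_g W^{*}=\mathbbm w_g\cdot V_g,
\]
where $V_g$ is a fixed unitary implementation of $\operatorname{id}_{\mathcal B(L^2(\mathbb R))}\otimes\alpha_g$ and $\mathbbm w_g$ is a unitary \emph{belonging to $\mathcal B(L^2(\mathbb R))\bar\otimes M$}, built from the Connes cocycle $c^{(g)}$; equivalently, the canonical isomorphism between the continuous cores for $\varphi$ and for $\varphi\circ\alpha_g^{-1}$ becomes inner --- and implemented inside $\mathcal B(L^2(\mathbb R))\bar\otimes M$ --- after the extra crossed product by the dual flow. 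Since $g\mapsto W\mathbf U_g W^{*}$ and $g\mapsto V_g$ are both unitary representations, the displayed identity forces $g\mapsto\mathbbm w_g$ to satisfy the $1$-cocycle relation $\mathbbm w_g\,(\operatorname{id}_{\mathcal B(L^2(\mathbb R))}\otimes\alpha_g)(\mathbbm w_h)=\mathbbm w_{gh}$, and it is strongly continuous because $g\mapsto c^{(g)}_t$ is; hence $\operatorname{Ad}(W)\circ\tilde\alpha_g\circ\operatorname{Ad}(W)^{-1}=\operatorname{Ad}(\mathbbm w_g)\circ(\operatorname{id}_{\mathcal B(L^2(\mathbb R))}\otimes\alpha_g)$, so that $(\operatorname{id},\mathbbm w)$ is a cocycle conjugacy from $\operatorname{id}_{\mathcal B(L^2(\mathbb R))}\otimes\alpha$ onto $\tilde\alpha$ after the Takesaki--Takai identification, as asserted.

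The main obstacle is the third step: pinning down the explicit Takesaki--Takai unitary $W$ precisely enough to carry out the computation of $W\mathbf U_g W^{*}$, and in particular to confirm that the residual twist $\mathbbm w_g$ genuinely lies in $\mathcal B(L^2(\mathbb R))\bar\otimes M$ rather than merely in $\mathcal B$ of the ambient Hilbert space --- this is the incarnation, in the present double-crossed-product setting, of the classical fact that a Connes-cocycle perturbation of a flow becomes a coboundary once one crosses by the dual flow. By contrast, the Connes-cocycle identities used in the first step, the continuity of $g\mapsto\mathbbm w_g$, the identification of the reference implementation $V_g$, and the bookkeeping with cocycle conjugacies are all routine.
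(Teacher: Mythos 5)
Your overall strategy---work with the explicit Takesaki--Takai picture and exhibit the twist between $\tilde\alpha$ and $\operatorname{id}_{\mathcal B(L^2(\mathbb R))}\otimes\alpha$ as a unitary $1$-cocycle with values in $\mathcal B(L^2(\mathbb R))\bar\otimes M$---is the right one, and your first step is sound: the formula $\mathbf U_g\eta(r,p)=(c^{(g)}_{-p})^*u_g\eta(r,p)$ does implement the doubly extended action, and the verification via the cocycle identity $(c^{(g)}_{-p})^*c^{(g)}_{q-p}=\sigma^\varphi_{-p}(c^{(g)}_q)$ and the chain rule $c^{(gh)}_t=\alpha_g(c^{(h)}_t)c^{(g)}_t$ is exactly the kind of computation the paper also performs. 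However, the step you yourself flag as ``the main obstacle'' is not a technicality you are entitled to defer: it is the entire content of the proposition. You never pin down the duality unitary $W$, never actually compute $W\mathbf U_gW^*$, and never establish that the residual twist $\mathbbm w_g$ lies in $\mathcal B(L^2(\mathbb R))\bar\otimes M$ rather than merely in the bounded operators on the ambient Hilbert space. Note that the soft argument one might hope for---that $\operatorname{Ad}(\mathbbm w_g)$ preserves $\mathcal B(L^2(\mathbb R))\bar\otimes M$ because both $W\mathbf U_gW^*$ and $V_g$ induce automorphisms of it---only shows that $\mathbbm w_g$ normalizes the algebra, not that it belongs to it. So as written the proposal asserts the conclusion at the decisive point rather than proving it.

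The paper's proof shows how to avoid this impasse entirely: instead of implementing $\tilde\alpha$ unitarily and conjugating by $W$, it works directly with the duality isomorphism $\Theta\colon \tilde M\rtimes_{\hat\sigma^\varphi}\mathbb R\to\mathcal B(L^2(\mathbb R))\bar\otimes M$ evaluated on the three families of generators ($[\Theta(x)\xi](s)=\sigma^\varphi_{-s}(x)\xi(s)$, $[\Theta(\lambda^{\sigma^\varphi}_t)\xi](s)=\xi(s-t)$, $[\Theta(\lambda^{\hat\sigma^\varphi}_t)\xi](s)=e^{its}\xi(s)$), and it simply \emph{writes down} the candidate cocycle as the decomposable operator $[\mathbb W_g\xi](s)=(D\varphi:D\varphi\circ\alpha_g^{-1})_{-s}\,\xi(s)$. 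Membership in $L^\infty(\mathbb R)\bar\otimes M\subset\mathcal B(L^2(\mathbb R))\bar\otimes M$ is then manifest from the formula, the cocycle identity for $\alpha'=\operatorname{id}_{\mathcal B(L^2(\mathbb R))}\otimes\alpha$ follows from the chain rule, and the relation $\Theta\circ\tilde\alpha_g=\operatorname{Ad}(\mathbb W_g)\circ\alpha'_g\circ\Theta$ is checked generator by generator using the same two Connes-cocycle identities you already isolated. If you want to salvage your route, the quickest repair is to abandon the conjugation by $W$ and instead guess the cocycle in this closed form; otherwise you must carry out the $W\mathbf U_gW^*$ computation in full, which is precisely the part you have not supplied.
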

\begin{proof}
Denote $\alpha'=\operatorname{id}_{\mathcal{B}(L^2(\mathbb R))}\otimes\alpha$.
We apply Takesaki--Takai duality \cite[Chapter X, Theorem 2.3(iii)]{Takesaki03} to understand the $G$-action $\tilde{\alpha}$.
If $M$ is represented faithfully on a Hilbert space $\mathcal H$, then the natural isomorphism 
\[
\Theta: \tilde{M}\rtimes_{\hat{\sigma}^\varphi}\mathbb R = (M\rtimes_{\sigma^\varphi}\mathbb R)\rtimes_{\hat{\sigma}^\varphi}\mathbb R \to \mathcal{B}(L^2(\mathbb R)) \bar{\otimes} M\subseteq \mathcal{B}(L^2(\mathbb R,\mathcal H))
\]
has the following properties.
Let $\xi\in L^2(\mathbb R,\mathcal H)$.
For $x\in M$ and $s,t\in\mathbb R$ we have
\[
[\Theta(x)\xi ](s) = \sigma^\varphi_{-s}(x)\xi(s) \quad\text{and}\quad [\Theta(\lambda^{\sigma^\varphi}_t)\xi](s) = \xi(s-t).
\]
Furthermore, if the dual flow is given via the convention 
$\hat{\sigma}^\varphi_t(\lambda_s^{\sigma^{\varphi}}) = e^{its}\lambda^{\sigma^{\varphi}}_s$, then we also have
\[
[\Theta(\lambda^{\hat{\sigma}^\varphi}_t)\xi](s)=e^{its}\xi(s).
\]
We consider a continuous family of unitaries $G\ni g\mapsto \mathbb{W}_g\in \mathcal{B}(L^2(\mathbb R)) \bar{\otimes} M$ given by
\[
[\mathbb{W}_g\xi](s) = (D\varphi: D\varphi\circ\alpha_g^{-1})_{-s}\xi(s),\quad \xi\in L^2(\mathbb R,\mathcal H),\quad s\in\mathbb R.
\]
We claim that this defines an $\alpha'$-cocycle.
Indeed, by using the chain rule for the Connes cocycle (\cite[Theorem VIII.3.7]{Takesaki03}), we observe for all $g,h\in G$, $\xi\in L^2(\mathbb R,\mathcal H)$ and $s\in\mathbb R$ that
\[
\begin{array}{ccl}
[\mathbb{W}_g\alpha'_g(\mathbb W_h)\xi](s) &=& (D\varphi: D\varphi\circ\alpha_g^{-1})_{-s}[\alpha'_g(\mathbb W_h)\xi](s) \\
&=& (D\varphi: D\varphi\circ\alpha_g^{-1})_{-s}\alpha_g\big( (D\varphi: D\varphi\circ\alpha_h^{-1})_{-s} \big)\xi(s) \\
&=& (D\varphi: D\varphi\circ\alpha_g^{-1})_{-s}\cdot (D\varphi\circ\alpha_g^{-1}: D\varphi\circ\alpha_{gh}^{-1})_{-s} \big)\xi(s) \\
&=& (D\varphi: D\varphi\circ\alpha_{gh}^{-1})_{-s}\xi(s) \ = \ [\mathbb{W}_{gh}\xi](s)
\end{array}
\]
Given how $\tilde{\alpha}$ acts on the domain of $\Theta$, we can observe (using \cite[Corollary VIII.1.4]{Takesaki03}) for any $g\in G$, $x \in M$, $\xi \in L^2(\R, \mathcal{H})$ and $s \in \R$ that
\[
\begin{array}{ccl}
[\Theta(\tilde{\alpha}_g(x))\xi ](s) &=& \sigma^\varphi_{-s}(\alpha_g(x))\xi(s) \\
&=& \operatorname{Ad}(D\varphi: D\varphi\circ\alpha_g^{-1})_{-s}\Big( \sigma_{-s}^{\varphi\circ\alpha_g^{-1}}(\alpha_g(x)) \Big)\xi(s) \\
&=& \Big(\operatorname{Ad}(D\varphi: D\varphi\circ\alpha_g^{-1})_{-s}\circ\alpha_g\Big) \big( \sigma^\varphi_{-s}(x) \big)\xi(s) \\
&=& \Big[ \Big( \operatorname{Ad}(\mathbb{W}_g)\circ\alpha'_g\circ\Theta\Big)(x)\, \xi \Big](s).
\end{array}
\]
Moreover, using the cocycle identity and the chain rule again, we can see for any $g \in G$, $\xi \in L^2(\R,\mathcal{H})$ and $s,t \in \R$ that
\[
\begin{array}{ccl}
[\Theta(\tilde{\alpha}_g(\lambda^{\sigma^\varphi}_t))\xi](s) &=& [\Theta\big( (D \varphi\circ\alpha_g^{-1} : D\varphi)_t\cdot\lambda^{\sigma^\varphi}_t \big)\xi](s) \\
&=& \sigma^{\varphi}_{-s}(D \varphi\circ\alpha_g^{-1} : D\varphi)_t [ \Theta(\lambda^{\sigma^\varphi}_t)\xi ](s) \\
&=& \sigma^{\varphi}_{-s}(D \varphi\circ\alpha_g^{-1} : D\varphi)_t \xi(s-t) \\
&=& (D\varphi: D\varphi\circ\alpha_g^{-1})_{-s} (D\varphi: D\varphi\circ\alpha_g^{-1})_{t-s}^* \xi(s-t) \\
&=& (D\varphi: D\varphi\circ\alpha_g^{-1})_{-s} \Big[ \Theta(\lambda_t^{\sigma^\varphi}) \mathbb{W}_g^* \xi \Big](s) \\
&=& \Big[ \mathbb{W}_g \Theta(\lambda_t^{\sigma^\varphi}) \mathbb{W}_g^* \xi \Big](s) \\
&=& \Big[ \Big( \operatorname{Ad}(\mathbb{W}_g)\circ\alpha'_g\circ\Theta\Big)(\lambda_t^{\sigma^\varphi}) \xi \Big](s)
\end{array}
\]
Here we used that $\alpha'_g$ fixes the shift operator given by $\Theta(\lambda^{\sigma^\varphi}_t)$ for all $g\in G$ and $t\in\mathbb{R}$.
Lastly, it is trivial to see that $\alpha'$ fixes operators of the form $\Theta(\lambda^{\hat{\sigma}^\varphi}_t)$, which in turn also commute pointwise with the cocycle $\mathbb{W}$.
In conclusion, all of these observations culminate in the fact that the isomorphism $\Theta$ extends to the cocycle conjugacy $(\Theta,\mathbb{W})$ between $\tilde{\alpha}$ and $\alpha'$.
\end{proof}

The following represents our most general McDuff-type absorption theorem:

\begin{theorem} \label{thm:general-McDuff}
Let $G$ be a second-countable locally compact group.
Let $\alpha: G \acts M$ be an action on a von Neumann algebra with separable predual and let $\delta: G \acts \mathcal{R}$ be a strongly self-absorbing action on the hyperfinite II$_1$ factor.
Then $\alpha\simeq_{\mathrm{cc}}\alpha\otimes\delta$ if and only if there exists a unital equivariant $*$-homomorphism $(\mathcal R,\delta) \to (M_{\omega,\alpha},\alpha_\omega)$.
\end{theorem}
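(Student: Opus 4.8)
The plan is to reduce the statement to the semi-finite case treated in Sections~2--3 by passing to the continuous core. Fix a faithful normal state $\varphi$ on $M$, form $\tilde M=M\rtimes_{\sigma^\varphi}\mathbb R$ with extended action $\tilde\alpha\colon G\curvearrowright\tilde M$ and dual flow $\hat\sigma^\varphi\colon\mathbb R\curvearrowright\tilde M$. Since every extended automorphism commutes with the dual flow, $\tilde\alpha$ and $\hat\sigma^\varphi$ combine to a continuous action $\alpha'=\tilde\alpha\times\hat\sigma^\varphi\colon G\times\mathbb R\curvearrowright\tilde M$ on the semi-finite algebra $\tilde M$. Let $\delta'\colon G\times\mathbb R\curvearrowright\mathcal R$ be $\delta$ on $G$ and trivial on $\mathbb R$; using Proposition~\ref{theorem:equivalence_ssa} one checks $\delta'$ is again strongly self-absorbing, since both defining properties (approximately inner half-flip, and a unital equivariant embedding into $(\mathcal R_{\omega,\delta'},\delta'_\omega)=(\mathcal R_{\omega,\delta},\delta_\omega)$) are inherited verbatim from $\delta$ once one observes that $\delta'\otimes\delta'$ and $\delta'_\omega$ are trivial on the $\mathbb R$-factor. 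The ``only if'' direction is exactly as in the implication (2)$\Rightarrow$(3) of Corollary~\ref{cor:equivalence_equivariant_McDuff}: a cocycle conjugacy $(M,\alpha)\simeq_{\mathrm{cc}}(M\bar\otimes\mathcal R,\alpha\otimes\delta)$ induces an isomorphism $(M_{\omega,\alpha},\alpha_\omega)\cong((M\bar\otimes\mathcal R)_{\omega,\alpha\otimes\delta},(\alpha\otimes\delta)_\omega)$, into which $(\mathcal R,\delta)$ embeds unitally and equivariantly via the second tensor factor and Proposition~\ref{theorem:equivalence_ssa}.

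For the ``if'' direction, let $\pi\colon(\mathcal R,\delta)\to(M_{\omega,\alpha},\alpha_\omega)$ be a unital equivariant $*$-homomorphism. The first step is to promote $\pi$ to a unital equivariant $*$-homomorphism $\tilde\pi\colon(\mathcal R,\delta')\to(\tilde M_{\omega,\alpha'},\alpha'_\omega)$ that is represented by sequences lying in $M\subseteq\tilde M$. Here one uses that $\hat\sigma^\varphi$ restricts to the identity on $M$, so sequences in $M$ are automatically $(\hat\sigma^\varphi,\omega)$-equicontinuous and $(\hat\sigma^\varphi)_\omega$-fixed; combined with the standard fact of modular theory that asymptotically central sequences in $M$ are $(\sigma^\varphi,\omega)$-equicontinuous and fixed by $(\sigma^\varphi)_\omega$, and with Proposition~\ref{prop:cp-centralizer} applied to $\sigma^\varphi\colon\mathbb R\curvearrowright M$, the classes $\pi(x)\in M_{\omega,\alpha}$ define classes in $\tilde M_\omega$; that these land in the $(\alpha',\omega)$-equicontinuous part and give an equivariant map (with trivial $\mathbb R$-direction on both sides) then follows from $\tilde\alpha|_M=\alpha$, $\hat\sigma^\varphi|_M=\mathrm{id}$ and $\pi(\mathcal R)\subseteq M_{\omega,\alpha}$, the only delicate point being to test $(\alpha',\omega)$-equicontinuity against a faithful normal state on $\tilde M$ whose restriction to $M$ is faithful (such exists).

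The second step is to run the proof of the sufficient criterion for strong absorption (Proposition~\ref{theorem:sufficient_criterium_strong_absorption}, hence Corollary~\ref{cor:equivalence_equivariant_McDuff}) for the action $\alpha'$ and the strongly self-absorbing $\delta'$, starting from $\tilde\pi$, while tracking that every unitary produced lies in $\mathcal R\bar\otimes M=(\mathcal R\bar\otimes\tilde M)^{\mathrm{id}_{\mathcal R}\otimes\hat\sigma^\varphi}$. Indeed, the half-flip unitaries $v_n\in\mathcal R\bar\otimes\mathcal R$ may be taken $(\delta'\otimes\delta')$-fixed in the $\mathbb R$-direction (the action there is trivial); then $\Phi(v_n\otimes1_{\tilde M})$ is represented by sequences in $\mathcal R\bar\otimes M$ because $\tilde\pi$ is; hence the liftings, the unitaries $w_n$, and finally all the unitaries $z_j$ and $u_n=z_1\cdots z_n$ of Lemma~\ref{lemma:one-sided_intertwining} applied to $\rho=1_{\mathcal R}\otimes\mathrm{id}_{\tilde M}$ stay in $\mathcal R\bar\otimes M$. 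Consequently the resulting isomorphism $\theta=\lim_n\mathrm{Ad}(u_n)\circ\rho\colon\tilde M\to\mathcal R\bar\otimes\tilde M$ honestly intertwines $\hat\sigma^\varphi$ with $\mathrm{id}_{\mathcal R}\otimes\hat\sigma^\varphi$, while the cocycle $\mathbbm v_{(g,t)}=\lim_n u_n(\delta'\otimes\alpha')_{(g,t)}(u_n^*)$ is trivial in the $\mathbb R$-direction and takes values in $\mathcal R\bar\otimes M$ in the $G$-direction. Restricting $\theta$ and $\mathbbm v$ to the fixed-point algebras of the dual flows, $\tilde M^{\hat\sigma^\varphi}=M$ and $(\mathcal R\bar\otimes\tilde M)^{\mathrm{id}_{\mathcal R}\otimes\hat\sigma^\varphi}=\mathcal R\bar\otimes M$, then yields a cocycle conjugacy $(M,\alpha)\to(\mathcal R\bar\otimes M,\delta\otimes\alpha)$, i.e.\ $\alpha\simeq_{\mathrm{cc}}\alpha\otimes\delta$.

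I expect the main obstacle to be precisely this second step: confining the whole absorption argument for $\alpha'$ to the subalgebra $\mathcal R\bar\otimes M$, so that the produced cocycle conjugacy is honestly $\hat\sigma^\varphi$-equivariant and therefore descends to $M$. Working only with the abstract conclusion $\alpha'\simeq_{\mathrm{cc}}\alpha'\otimes\delta'$ does not suffice: restricting to the $\mathbb R$-direction, crossing out the dual flow, and applying Proposition~\ref{prop:Takai-duality} would give only that the amplification $\mathrm{id}_{\mathcal B(L^2(\mathbb R))}\otimes\alpha$ absorbs $\delta$, and amplification does not in general reflect cocycle conjugacy (already for the trivial group, $\mathcal B(\mathcal H)\bar\otimes\mathcal B(\mathcal H)\cong\mathcal B(\mathcal H)\bar\otimes\mathbb C$). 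Keeping the construction inside $\mathcal R\bar\otimes M$ --- possible exactly because $\tilde\pi$ originates from $\pi$ and so has representatives fixed by $\hat\sigma^\varphi$ --- is what makes the Tomita--Takesaki descent go through.
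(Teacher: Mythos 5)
Your proposal is correct in its overall strategy and shares the paper's first half: passing to the continuous core $\tilde{M}$, enlarging the group to $G\times\mathbb R$ by adjoining the dual flow, and promoting the embedding $(\mathcal R,\delta)\to(M_{\omega,\alpha},\alpha_\omega)$ to the core via Proposition~\ref{prop:cp-centralizer}. It diverges genuinely in the descent step. You descend by restricting to the fixed-point algebras of the dual flow, which forces you to reopen Proposition~\ref{theorem:sufficient_criterium_strong_absorption} and Lemma~\ref{lemma:one-sided_intertwining} and confine every unitary of the intertwining argument to $\mathcal R\bar{\otimes}M=(\mathcal R\bar{\otimes}\tilde{M})^{\operatorname{id}_{\mathcal R}\otimes\hat{\sigma}^\varphi}$. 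This is feasible for the reasons you indicate (the action of the $\mathbb R$-direction on $\mathcal R\bar{\otimes}M$ is trivial, and the image of $\tilde{\pi}$ has representatives in $M$), but it is the bulk of the work and is only sketched: one must check that the lifts of $\Phi(v_n\otimes 1)$ to honest unitaries can be performed inside the von Neumann subalgebra $\mathcal R\bar{\otimes}M$, that the unitaries $z_n$ of Lemma~\ref{lemma:one-sided_intertwining} (which are drawn from the sequence $(w_n)$) inherit this, and that the exact $\hat{\sigma}^\varphi$-equivariance of $\theta=\lim_n\operatorname{Ad}(u_n)\circ\rho$ survives the point-strong limit (it does, since normal automorphisms preserve strong limits of bounded sequences). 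The paper instead uses Corollary~\ref{cor:equivalence_equivariant_McDuff} on the core as a black box and descends by crossing the resulting cocycle conjugacy with the dual flow and invoking Takesaki--Takai duality (Proposition~\ref{prop:Takai-duality}) --- exactly the route you reject on the grounds that amplification by $\mathcal B(L^2(\mathbb R))$ does not reflect cocycle conjugacy. That objection is valid for general $M$, but the paper neutralizes it by first splitting off the largest finite direct summand (already covered by Corollary~\ref{cor:equivalence_equivariant_McDuff}, since finite implies semi-finite) and assuming the remainder properly infinite, where $\operatorname{id}_{\mathcal B(L^2(\mathbb R))}\otimes\alpha\simeq_{\mathrm{cc}}\alpha$. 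Your approach buys a direct construction that never leaves $M$ and avoids the properly-infinite reduction; the paper's buys a shorter proof that reuses the semi-finite absorption theorem verbatim. If you carry out the confinement argument in full detail, your proof is a valid alternative.
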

\begin{proof}
The ``only if'' part follows exactly as in the proof of Corollary~\ref{cor:equivalence_equivariant_McDuff}, so we need to show the ``if'' part.
Since Corollary~\ref{cor:equivalence_equivariant_McDuff} already covers the case when $M$ is finite, we may split off the largest finite direct summand of $M$ and assume without loss of generality that $M$ is properly infinite.

Let $\varphi$ be a faithful normal state on $M$.
As in Remark~\ref{rem:tt-basics}, we consider the (semi-finite) continuous core $\tilde{M}$ and the extended $G$-action $\tilde{\alpha}: G\curvearrowright\tilde{M}$.
Since the image of $\tilde{\alpha}$ commutes with the dual flow $\hat{\sigma}^{\varphi}$, we have a continuous action $\beta=\tilde{\alpha}\times\hat{\sigma}^\varphi: G\times\mathbb{R}\curvearrowright\tilde{M}$ via $\beta_{(g,t)}=\tilde{\alpha}_g\circ\hat{\sigma}^\varphi_t$ for all $g\in G$ and $t\in\mathbb{R}$.
Let us also consider the action $\delta^{\mathbb R}: G\times\mathbb{R}\curvearrowright\mathcal R$ given by $\delta^{\mathbb R}_{(g,t)}=\delta_g$ for all $g\in G$ and $t\in\mathbb{R}$, which is evidently also strongly self-absorbing.

We apply Proposition~\ref{prop:cp-centralizer} to $\mathbb{R}$ in place of $G$ and to the modular flow in place of $\alpha$.
In this context, we note that by \cite[Theorem 4.1, Proposition 4.11]{AndoHaagerup14}, we have that the $M^\omega$ agrees with the $(\sigma^\varphi,\omega)$-equicontinuous part $M^\omega_{\sigma^\varphi}$.
In particular, the induced ultrapower flow $(\sigma^\varphi)^\omega$ on it is continuous and and its restriction to $M_\omega$ is trivial.
So $M_\omega=(M_{\omega,\sigma^\varphi})^{(\sigma^\varphi)_\omega}$ and Proposition~\ref{prop:cp-centralizer} implies that the inclusion $M\subset\tilde{M}$ induces an embedding $M_\omega\to \tilde{M}_\omega$.
Since by definition, one has $\tilde{\alpha}|_M=\alpha$ as $G$-actions, it is clear that bounded $(\alpha,\omega)$-equicontinuous sequences in $M$ become $(\tilde{\alpha},\omega)$-equicontinuous sequences in $\tilde{M}$.
Keeping in mind that the dual flow $\hat{\sigma}^\varphi$ acts trivially on $M$ by definition, the aforementioned inclusion therefore induces an equivariant unital $*$-homomorphism
\[
(M_{\omega,\alpha},\alpha_\omega) \to ( (\tilde{M}_{\omega,\beta})^{(\hat{\sigma}^\varphi)_\omega},\tilde{\alpha}_\omega ).
\]
If we compose this $*$-homomorphism with a given unital equivariant $*$-homomorphism $(\mathcal R,\delta)\to (M_{\omega,\alpha},\alpha_\omega)$, we can view the resulting map as a $(G\times\mathbb{R})$-equivariant unital $*$-homomorphism $(\mathcal R,\delta^{\mathbb R}) \to (\tilde{M}_{\omega,\beta},\beta_\omega)$.
Since $\tilde{M}$ is semi-finite, it follows from Corollary~\ref{cor:equivalence_equivariant_McDuff} that $\beta$ and $\beta\otimes\delta^{\mathbb R}$ are cocycle conjugate as $(G\times\mathbb R)$-actions, say via $(\Psi,\mathbb{V}): (\tilde{M},\beta)\to (\tilde{M}\bar{\otimes}\mathcal{R},\beta\otimes\delta^{\mathbb R})$.
Remembering $\beta=\tilde{\alpha}\times\hat{\sigma}^\varphi$, we consider the $(\hat{\sigma}^\varphi\otimes \mathrm{id}_\mathcal{R})$-cocycle $\mathbbm{w}_t=\mathbb{V}_{(1_G,t)}$ and the $\tilde{\alpha}\otimes \delta$-cocycle $\mathbbm{v}_g=\mathbb{V}_{(g,0)}$.
The cocycle identity for $\mathbb{V}$ then implies the relation 
\begin{equation} \label{eq:cocycle-interaction}
\mathbbm{w}_t(\hat{\sigma}^\varphi_t\otimes\operatorname{id}_{\mathcal R})(\mathbbm{v}_g)=\mathbbm{v}_g(\tilde{\alpha}\otimes\delta)_g(\mathbbm{w}_t)
\end{equation}
for all $g\in G$ and $t\in\mathbb R$.
The cocycle conjugacy of flows $(\Psi,\mathbbm{w})$ induces an isomorphism
\[
\Lambda:=(\Psi,\mathbbm{w})\rtimes\mathbb R: \tilde{M}\rtimes_{\hat{\sigma}^\varphi}\mathbb R \to (\tilde{M}\bar{\otimes}\mathcal R)\rtimes_{\hat{\sigma}^\varphi\otimes\operatorname{id}_{\mathcal R}}\mathbb R \cong \big(\tilde{M}\rtimes_{\hat{\sigma}^\varphi}\mathbb R\big)\otimes\mathcal R
\]
via
\[
\Lambda|_{\tilde{M}} = \Psi \quad\text{and}\quad \Lambda(\lambda^{\hat{\sigma}^\varphi}_t)=\mathbbm{w}_t(\lambda^{\hat{\sigma}^\varphi}_t\otimes 1_{\mathcal R}),\quad t\in\mathbb R.
\]
With slight abuse of notation (as in Proposition~\ref{prop:Takai-duality}), we also denote by $\tilde{\alpha}$ the obvious induced $G$-action on the crossed product $\tilde{M}\rtimes_{\hat{\sigma}^\varphi}\mathbb R$.
Using that $(\Psi,\mathbb{V})$ was a cocycle conjugacy, we observe for all $g\in G$ and $t\in\mathbb{R}$ that 
\[
\operatorname{Ad}(\mathbbm{v}_g)\circ(\tilde{\alpha}\otimes\delta)_g\circ\Lambda|_{\tilde{M}} = \operatorname{Ad}(\mathbb{V}_{(g,0)})\circ(\beta\otimes\delta^{\mathbb R})_{(g,0)}\circ\Psi=\Psi\circ\beta_{(g,0)}=\Psi\circ\tilde{\alpha}_g
\]
and
\[
\begin{array}{ccl}
\big( \operatorname{Ad}(\mathbbm{v}_g)\circ(\tilde{\alpha}\otimes\delta)_g\circ\Lambda \big)(\lambda^{\hat{\sigma}^\varphi}_t) 
&=& \mathbbm{v}_g \big( (\tilde{\alpha}\otimes\delta)_g(\mathbbm{w}_t) (\lambda^{\hat{\sigma}^\varphi}_t\otimes 1_{\mathcal R}) \big) \mathbbm{v}_g^* \\
&\stackrel{\eqref{eq:cocycle-interaction}}{=}& \mathbbm{w}_t(\hat{\sigma}^\varphi_t\otimes\operatorname{id}_{\mathcal R})(\mathbbm{v}_g) (\lambda^{\hat{\sigma}^\varphi}_t\otimes 1_{\mathcal R}) \mathbbm{v}_g^* \\
&=& \mathbbm{w}_t(\lambda^{\hat{\sigma}^\varphi}_t\otimes 1_{\mathcal R}) \ = \ \Lambda(\lambda^{\hat{\sigma}^\varphi}_t).
\end{array}
\]
In conclusion, the pair $(\Lambda,\mathbbm{v})$ defines a cocycle conjugacy between the $G$-actions $\tilde{\alpha}$ on $\tilde{M}\rtimes_{\hat{\sigma}^\varphi}\mathbb R$ and $\tilde{\alpha}\otimes\delta$ on $\big(\tilde{M}\rtimes_{\hat{\sigma}^\varphi}\mathbb R\big)\otimes\mathcal R$.
By Proposition~\ref{prop:Takai-duality}, the action $\tilde{\alpha}$ is cocycle conjugate to $\operatorname{id}_{\mathcal B(\ell^2(\N))}\otimes\alpha$.
Since we assumed $M$ to be properly infinite, it furthermore follows that $\operatorname{id}_{\mathcal B(\ell^2(\N))}\otimes\alpha$ is cocycle conjugate to $\alpha$.\footnote{Although this appears to be well-known, we could not find a good literature source for this precise claim and in this generality. We note, however, that the recent proof of the analogous \cstar-algebraic statement \cite[Proposition 1.4]{GabeSzabo22kp} carries over in an obvious way to this setting.}
Combining all these cocycle conjugacies yields one between $\alpha$ and $\alpha\otimes\delta$.
This finishes the proof.
\end{proof}

The following consequence is our last main result, which generalizes Corollary~\ref{cor:McDuff-passes} to actions on arbitrary von Neumann algebras.

\begin{theorem} \label{thm:general-amenable-McDuff}
Let $G$ be a countable discrete group and $M$ a von Neumann algebra with separable predual such that $M \cong M\bar{\otimes} \mathcal{R}$. 
Then for every amenable action $\alpha: G \acts M$, one has $\alpha \simeq_{\mathrm{cc}} \alpha \otimes \mathrm{id}_\mathcal{R}$.
\end{theorem}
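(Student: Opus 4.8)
The plan is to imitate the proof of Theorem~\ref{thm:general-McDuff}, using Tomita--Takesaki theory to reduce the statement over an arbitrary $M$ to the semi-finite case of Section~4. Since $\mathrm{id}_\mathcal{R}$ is strongly self-absorbing (Example~\ref{example:trivial_action_R}), Theorem~\ref{thm:general-McDuff} shows it suffices to produce a unital equivariant $*$-homomorphism $(\mathcal{R},\mathrm{id}_\mathcal{R})\to(M_{\omega,\alpha},\alpha_\omega)$, which --- as $G$ is discrete, so that $M_{\omega,\alpha}=M_\omega$ --- just means a unital $*$-homomorphism $\mathcal{R}\to(M_\omega)^{\alpha_\omega}$. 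If $M$ is semi-finite, this (and indeed the whole statement) is already the special case $H=G$ of Corollary~\ref{cor:McDuff-passes}, with $\delta$ the unique action of the trivial group. For general $M$, write $M=M_{\mathrm{sf}}\oplus M_{\mathrm{III}}$ for the semi-finite and type~III direct summands; both are $\alpha$-invariant and again isomorphic to their tensor product with $\mathcal{R}$, and $\alpha|_{M_{\mathrm{sf}}}$ is amenable (amenability only concerns the restriction to the centre), so $M_{\mathrm{sf}}$ is handled by Corollary~\ref{cor:McDuff-passes}. Hence I would assume from now on that $M$ is of type~III, in particular properly infinite.

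Next I would run the argument of Theorem~\ref{thm:general-McDuff} essentially verbatim in this setting: fix a faithful normal state $\varphi$, pass to the semi-finite continuous core $\tilde M=M\rtimes_{\sigma^\varphi}\mathbb{R}$ with extended action $\tilde\alpha\colon G\acts\tilde M$, and form the $(G\times\mathbb{R})$-action $\beta=\tilde\alpha\times\hat\sigma^\varphi$. Since the trace on $\mathcal{R}$ has trivial modular flow, $\widetilde{M\bar\otimes\mathcal{R}}\cong\tilde M\bar\otimes\mathcal{R}$, whence $\tilde M\cong\tilde M\bar\otimes\mathcal{R}$. The goal becomes to establish $\beta\simeq_{\mathrm{cc}}\beta\otimes\mathrm{id}_\mathcal{R}$ as $(G\times\mathbb{R})$-actions; granting this, the resulting cocycle conjugacy $(\Psi,\mathbb{V})$ is processed exactly as in the proof of Theorem~\ref{thm:general-McDuff} --- one extracts the cocycles $\mathbbm{v}_g=\mathbb{V}_{(g,0)}$ and $\mathbbm{w}_t=\mathbb{V}_{(1_G,t)}$, forms $\Lambda=(\Psi,\mathbbm{w})\rtimes\mathbb{R}\colon\tilde M\rtimes_{\hat\sigma^\varphi}\mathbb{R}\to(\tilde M\rtimes_{\hat\sigma^\varphi}\mathbb{R})\bar\otimes\mathcal{R}$, checks that $(\Lambda,\mathbbm{v})$ is a cocycle conjugacy of the induced $\tilde\alpha$-actions, and concludes with Proposition~\ref{prop:Takai-duality} ($\tilde M\rtimes_{\hat\sigma^\varphi}\mathbb{R}\cong\mathcal{B}(L^2(\mathbb{R}))\bar\otimes M$, intertwining $\tilde\alpha$ with $\mathrm{id}\otimes\alpha$) together with the fact that $M$ is properly infinite, so $\mathrm{id}_{\mathcal{B}(\ell^2(\mathbb{N}))}\otimes\alpha\simeq_{\mathrm{cc}}\alpha$.

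To obtain $\beta\simeq_{\mathrm{cc}}\beta\otimes\mathrm{id}_\mathcal{R}$, by Corollary~\ref{cor:equivalence_equivariant_McDuff} (applicable since $\tilde M$ is semi-finite) it suffices to construct a $(G\times\mathbb{R})$-equivariant unital $*$-homomorphism $(\mathcal{R},\mathrm{id}_\mathcal{R})\to(\tilde M_{\omega,\beta},\beta_\omega)$. Amenability of $\alpha$ passes to the extended action $\tilde\alpha$ --- this only concerns the induced $G$-action on $\mathcal{Z}(\tilde M)\cong L^\infty$ of the flow of weights of $M$, which inherits amenability from $\alpha|_{\mathcal{Z}(M)}$ --- so Corollary~\ref{cor:McDuff-passes} applied to $\tilde\alpha\colon G\acts\tilde M$ (with $H=G$) yields, via Corollary~\ref{cor:equivalence_equivariant_McDuff}, a $G$-equivariant unital $*$-homomorphism $\mathcal{R}\to(\tilde M_\omega)^{\tilde\alpha_\omega}$. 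What remains is to upgrade this to an embedding that in addition lies in the $\hat\sigma^\varphi$-equicontinuous part of $\tilde M_\omega$ and is fixed by $(\hat\sigma^\varphi)_\omega$, and this is where the non-discrete variable $\mathbb{R}$ must be confronted. I would approach it by disintegrating $\tilde M$ over its centre into a measurable field of semi-finite McDuff factors, performing the fibrewise construction with Lemma~\ref{lem:approx-central-embeddings} (which needs neither semi-finiteness of the fibres nor more than amenability of the relevant isotropy groups modulo the kernels of their centralizer actions), and then gluing the fibrewise embeddings into a global $(G\times\mathbb{R})$-equivariant one by a measurable local-to-global argument of the type underlying Theorem~\ref{thm:main_technical}. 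The step I expect to be the main obstacle is precisely this gluing: arranging the fibrewise data coherently across the transformation groupoid of the $(G\times\mathbb{R})$-action on $\widehat{\mathcal{Z}(\tilde M)}$, which is no longer discrete, so that the machinery of Section~4 applies. This is the point at which the specific structure of the dual flow on the continuous core --- that it restricts to the flow of weights on the centre and scales the trace --- has to be exploited, and it is the technical crux on which the reduction to the semi-finite case ultimately rests.
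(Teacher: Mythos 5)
There is a genuine gap, and it sits exactly where you say you expect the main obstacle to be. Your plan is to pass to the continuous core first and then look for a $(G\times\mathbb{R})$-equivariant unital $*$-homomorphism $(\mathcal{R},\mathrm{id}_{\mathcal{R}})\to(\tilde{M}_{\omega,\beta},\beta_\omega)$, where $\beta=\tilde\alpha\times\hat\sigma^\varphi$. Corollary~\ref{cor:McDuff-passes} only gives you $G$-equivariance, and the upgrade to an embedding that is $(\hat\sigma^\varphi,\omega)$-equicontinuous and fixed by $(\hat\sigma^\varphi)_\omega$ is left unresolved; the tool you propose for it --- a local-to-global argument ``of the type underlying Theorem~\ref{thm:main_technical}'' over the transformation groupoid of the $(G\times\mathbb{R})$-action on $\widehat{\mathcal{Z}(\tilde M)}$ --- is not available, because every ingredient of Section~4 (the Connes--Feldman--Weiss theorem, the Popa--Shlyakhtenko--Vaes cohomology lemma for countable nonsingular equivalence relations, Masuda's Rokhlin theorem for discrete amenable groups) is specific to countable discrete groups and discrete measured groupoids. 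Nothing in the paper extends this machinery to a groupoid with non-discrete isotropy, so the ``technical crux on which the reduction ultimately rests'' is not a routine adaptation but a missing proof.

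The paper's actual argument inverts your order of operations and thereby never meets this problem: the continuous core is only ever touched inside Theorem~\ref{thm:general-McDuff}, which is used as a black box, and there the $\mathbb{R}$-equivariance comes for free because the embedding is constructed inside $M_\omega$ \emph{before} passing to the core (the dual flow acts trivially on $M$, so via Proposition~\ref{prop:cp-centralizer} elements of $M_\omega$ are automatically $(\hat\sigma^\varphi)_\omega$-fixed in $\tilde M_\omega$). Concretely, to produce the required unital $*$-homomorphism $\mathcal{R}\to(M_\omega)^{\alpha_\omega}$ the paper exploits that $M_\omega$ is always a \emph{finite} von Neumann algebra (the restriction of $\varphi^\omega$ is a trace), regardless of the type of $M$: one builds, by an inductive exhaustion starting from $N_1=\mathcal{Z}(M)$ and repeatedly adjoining $\alpha_{\omega,g}$-translates of unital copies of $\mathcal{R}$ in relative commutants, a separably acting, $\alpha_\omega$-invariant, McDuff, finite von Neumann subalgebra $N\subseteq M_\omega$ with $\mathcal{Z}(M)\subseteq\mathcal{Z}(N)$ equivariantly; amenability of $\alpha_\omega|_N$ then follows from \cite[Theorem 2.4]{Zimmer78}, Corollary~\ref{cor:McDuff-passes} applies to $N$ itself, and a reindexation yields $\mathcal{R}\to(M_\omega)^{\alpha_\omega}$, after which Theorem~\ref{thm:general-McDuff} finishes the proof. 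If you want to salvage your approach, the fix is to prove the embedding at the level of $M_\omega$ first (as the paper does) rather than at the level of the core.
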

\begin{proof}
Choose a faithful normal state $\varphi$ on $M$.
Recall that the induced faithful normal state $\varphi^\omega$ on $M^\omega$ restricts to a tracial state on $M_\omega$. We denote by $\|\cdot\|_2=\|\cdot\|_{\varphi^\omega}$ the induced tracial 2-norm on $M_\omega$.
Since we assumed $M$ to be McDuff, it follows that $M_\omega$ is locally McDuff in the following sense:
Given any $\|\cdot\|_2$-separable $*$-subalgebra $B\subset M_\omega$, there exists a unital $*$-homomorphism $\mathcal R\to M_\omega\cap B'$.

Now we choose $N_1=\mathcal{Z}(M)$ as a subalgebra of $M_\omega$.
We may then choose a unital $*$-homomorphism $\psi_1: \mathcal{R}\to M_\omega$, and define $N_2$ to be the $\|\cdot\|_2$-closed $*$-subalgebra generated by $N_1$ and the range of $\alpha_{\omega,g}\circ\psi_1$ for all $g\in G$.
After that, we may choose a unital $*$-homomorphism $\psi_2: \mathcal{R}\to M_\omega\cap N_2'$, and define $N_3$ to be the $\|\cdot\|_2$-closed $*$-subalgebra generated by $N_2$ and the range of $\alpha_{\omega,g}\circ\psi_2$ for all $g\in G$.
Carry on inductively like this and obtain an increasing sequence of $\alpha_{\omega}$-invariant separable von Neumann subalgebras $N_k\subseteq M_\omega$.
The $\|\cdot\|_2$-closure $N$ of the union $\bigcup_{k\geq 1} N_k$ is then a separably acting finite von Neumann subalgebra, which is clearly McDuff and $\alpha_{\omega}$-invariant.
Furthermore we have an equivariant inclusion $\mathcal{Z}(M)\subseteq\mathcal{Z}(N)$, which implies (for instance by \cite[Theorem 2.4]{Zimmer78}) that the action $\alpha_\omega$ is amenable on $N$.

By Corollary~\ref{cor:McDuff-passes} (with $H=G$), it follows that $\alpha_{\omega}|_N$ is cocycle conjugate to $(\alpha_\omega|_N)\otimes\operatorname{id}_{\mathcal R}$.
In particular we may find some unital $*$-homomorphism $\mathcal R\to (N_\omega)^{\alpha_\omega}$.
Applying a standard reindexation trick, we may use this to obtain a unital $*$-homomorphism $\mathcal R\to (M_\omega)^{\alpha_\omega}$, which finishes the proof by Theorem~\ref{thm:general-McDuff}.
\end{proof}


\bibliographystyle{gabor}
\bibliography{master3}
\addcontentsline{toc}{section}{References}

\end{document}